\documentclass[11pt]{article}

\usepackage{amsmath,amssymb,amsthm}
\usepackage[unicode,breaklinks=true,colorlinks=true]{hyperref}
\usepackage[dvipsnames]{xcolor}


\usepackage[top=1in, bottom=1in, left=1.25in, right=1.25in, marginparwidth=1in, marginparsep=0.1in]{geometry}

\numberwithin{equation}{section}
\newtheorem{theorem}{Theorem}[section]

\newtheorem{lemma}[theorem]{Lemma}
\newtheorem{definition}[theorem]{Definition}

\theoremstyle{remark}
\newtheorem{remark}[theorem]{Remark}
\newtheorem{example}[theorem]{Example}

\definecolor{darkblue}{rgb}{0,0,0.7}



\newcommand{\bke}[1]{\left( #1 \right)}
\newcommand{\bkt}[1]{\left[ #1 \right]}

\newcommand{\norm}[1]{\left\| #1 \right\|}
\newcommand{\Norm}[1]{\left\Vert #1 \right\Vert}

\newcommand{\al}{\alpha}
\newcommand{\be}{\beta}
\newcommand{\de}{\delta}
\newcommand{\e}{\epsilon}
\newcommand{\ga}{{\gamma}}

\newcommand{\la}{\lambda}

\newcommand{\si}{\sigma}

\newcommand{\ka}{\kappa}
\newcommand{\De}{\Delta}
\renewcommand{\th}{\theta}

\newcommand{\R}{{\mathbb R }}\newcommand{\RR}{{\mathbb R }}
\newcommand{\N}{{\mathbb N}}
\newcommand{\Z}{{\mathbb Z}}

\newcommand{\cN}{{\mathcal N}}

\newcommand{\pd}{{\partial}}
\newcommand{\nb}{{\nabla}}
\newcommand{\lec}{\lesssim}

\newcommand{\I}{\infty}
 
\renewcommand{\div}{\mathop{\mathrm{div}}}

\newcommand{\donothing}[1]{{}}

\newcommand{\EQ}[1]{\begin{equation}\begin{split} #1 \end{split}\end{equation}}
\newcommand{\EQN}[1]{\begin{equation*}\begin{split} #1 \end{split}\end{equation*}}
\newcommand{\Eq}[1]{\begin{equation} #1 \end{equation}}
\newcommand{\Eqn}[1]{\begin{equation*} #1 \end{equation*}}

\DeclareMathOperator*{\esssup}{ess\,sup}

\makeatletter
\newcommand{\xRightarrow}[2][]{\ext@arrow 0359\Rightarrowfill@{#1}{#2}}
\makeatother

\newcommand{\loc}{\mathrm{loc}} 
\newcommand{\uloc}{\mathrm{uloc}}
\newcommand{\far}{\mathrm{far}} 
\newcommand{\near}{\mathrm{near}} 

\newcommand{\LE}{\mathbf{LE}}

\let\OLDthebibliography\thebibliography
\renewcommand\thebibliography[1]{
  \OLDthebibliography{#1}
  \setlength{\parskip}{1pt}
  \setlength{\itemsep}{1pt plus 0.3ex}
}


\begin{document}
\title{Local energy solutions to the Navier-Stokes equations in Wiener amalgam spaces}
\author{Zachary Bradshaw and Tai-Peng Tsai}
\date{\today}
\maketitle 

\begin{abstract}
We establish existence of solutions in a scale of classes weaker than the finite energy Leray class and stronger than the infinite energy Lemari\'e-Rieusset class. The new classes are based on the $L^2$ Wiener amalgam spaces.  Solutions in the classes closer to the Leray class are shown to satisfy some properties known in the Leray class but not  the Lemari\'e-Rieusset class, namely eventual regularity and long time estimates on the growth of the local energy.  In this sense, these solutions bridge the gap between Leray's original solutions and Lemari\'e-Rieusset's solutions and help identify scalings at which certain properties may break down. 
\end{abstract}

\section{Introduction}\label{sec.intro}

The Navier-Stokes equations describe the evolution of a viscous incompressible fluid's velocity field $u$ and associated scalar pressure $p$.  In particular, $u$ and $p$ are required to satisfy
\EQ{\label{eq.NSE}
&\partial_tu-\Delta u +u\cdot\nabla u+\nabla p = 0,
\\& \nabla \cdot u=0,
}
in the sense of distributions.  For our purpose, \eqref{eq.NSE} is applied on $\R^3\times (0,\I)$ and $u$ evolves from a prescribed, divergence free initial data $u_0:\R^3\to \R^3$.

In the classical paper \cite{leray}, J.~Leray constructed  global-in-time weak solutions to \eqref{eq.NSE} on $\R^4_+=\R^3\times (0,\infty)$ for any divergence free vector field $u_0\in L^2(\R^3)$.  Leray's solution $u$ satisfies the following properties:
\begin{enumerate}
\item $u\in L^\I(0,\I;L^2(\R^3))\cap L^2(0,\I;\dot H^1(\R^3))$,
\item $u$ satisfies the weak form of \eqref{eq.NSE},
\[
\iint -u \pd_t \zeta + \nb u:\nb \zeta + (u \cdot \nb )u \cdot \zeta = 0,\quad \forall \zeta \in C^\infty_c(\R^4_+;\R^3), \quad \div \zeta=0,
\]
\item $u(t)\to u_0$ in $L^2(\R^3)$ as $t\to 0^+$,
\item $u$ satisfies the \emph{global energy inequality}: For all $t>0$,
\[
\int_{\R^3} |u(x,t)|^2\,dx +2 \int_0^t \int_{\R^3} |\nabla u(x,t)|^2\,dx\,ds \leq \int_{\R^3} |u_0(x)|^2\,dx.
\]
\end{enumerate}
 The above existence result was extended to domains by Hopf in \cite{Hopf}.
We refer to the solutions constructed by Leray as \emph{Leray's original solutions} and refer to 
any solution satisfying the above properties as a \emph{Leray-Hopf weak solution}.   Note that, based on their construction, Leray's original solutions satisfy additional properties.  For example, they are suitable in the sense of \cite{CKN}; see \eqref{CKN-LEI}, this is proven in \cite[Proposition 30.1]{LR}. Leray-Hopf weak solutions, on the other hand, are not known to be suitable generally. 

Although many important questions about these weak solutions remain open, e.g.,~uniqueness and global-in-time regularity, some positive results are available.  In particular, it is known that the singular sets of Leray-Hopf weak solutions which are suitable are precompact in space-time.  This follows  from Leray \cite[(6.4)]{leray}, and the partial regularity results of Scheffer \cite{VS76b} and Caffarelli, Kohn, and Nirenberg \cite{CKN} (see also \cite{LR}, \cite{BCI}, and \cite[Chap.~6]{Tsai-book}).

In his book \cite{LR}, Lemari\'e-Rieusset introduced a local analogue of suitable Leray-Hopf weak solutions called \emph{local energy solutions}.  These solutions evolve from uniformly locally square integrable data  $u_0\in L^2_{\uloc}$. Here, for $1\le q \le \infty$, $L^q_{\uloc}$ is the space of functions on $\R^3$ with finite norm
\[
\norm{u_0}_{L^q_{\uloc}} :=\sup_{x \in\R^3} \norm{u_0}_{L^q(B(x,1))}<\infty.
\]
We also denote
\[
E^q = \overline{C_c^\I(\R^3)}^{L^q_{\uloc}},
\]
the closure of $C_c^\I(\R^3)$ in $L^q_{\uloc}$-norm.
Having a notion of weak solution in a broader class than Leray's is useful when analyzing initial data in critical spaces such as the Lebesgue space $L^3$, the Lorentz space $L^{3,\I}=L^3_w$, or the Morrey space $M^{2,1}$, all of which embed in $L^2_{\uloc}$ but not in $L^2$ (see \cite{JiaSverak} for an example where this was crucial).  By \emph{critical spaces} we mean spaces for which the norm of $u$ is scaling invariant.  It is in such spaces that many arguments break down.  For example, $L^\I(0,T;L^3)$ is a regularity class for Leray-Hopf solutions \cite{ESS}, but this is unknown for $L^\I(0,T;L^3_w)$. 

The following definition is motivated by those found in \cite{LR,KiSe,JiaSverak-minimal,JiaSverak}.

\begin{definition}[Local energy solutions]\label{def:localLeray} Let $0<T\leq \I$. A vector field $u\in L^2_{\loc}(\R^3\times [0,T))$ is a local energy solution to \eqref{eq.NSE} with divergence free initial data $u_0\in L^2_{\uloc}(\R^3)$, denoted as $u \in \cN(u_0)$, if:
\begin{enumerate}
\item for some $p\in L^{3/2}_{\loc}(\R^3\times [0,T))$, the pair $(u,p)$ is a distributional solution to \eqref{eq.NSE},
\item for any $R>0$, $u$ satisfies
\begin{equation}\notag
\esssup_{0\leq t<R^2\wedge T}\,\sup_{x_0\in \R^3}\, \int_{B_R(x_0 )}\frac 1 2 |u(x,t)|^2\,dx + \sup_{x_0\in \R^3}\int_0^{R^2\wedge T}\int_{B_R(x_0)} |\nabla u(x,t)|^2\,dx \,dt<\infty,\end{equation}
\item for any $R>0$, $x_0\in \R^3$, and $0<T'< T $, there exists a function of time $c_{x_0,R}(t)\in L^{3/2}(0,T')$ so that, for every $0<t<T'$  and $x \in B_{2R}(x_0)$  
\EQ{ \label{pressure.dec}
&p(x,t)=-\Delta^{-1}\div \div [(u\otimes u )\chi_{4R} (x-x_0)]
\\&\quad - \int_{\R^3} (K(x-y) - K(x_0 -y)) (u\otimes u)(y,t)(1-\chi_{4R}(y-x_0))\,dy 
+ c_{x_0,R}(t),
}
in $L^{3/2}(B_{2R}(x_0)\times (0,T'))$
where  $K(x)$ is the kernel of $\Delta^{-1}\div \div$,
 $K_{ij}(x) = \pd_i \pd_j \frac {-1}{4\pi|x|}$, and $\chi_{4R} (x)$ is the characteristic function for $B_{4R}$. 
\item for all compact subsets $K$ of $\R^3$ we have $u(t)\to u_0$ in $L^2(K)$ as $t\to 0^+$,
\item $u$ is suitable in the sense of Caffarelli-Kohn-Nirenberg, i.e., for all cylinders $Q$ compactly supported in  $ \R^3\times(0,T )$ and all non-negative $\phi\in C_c^\infty (Q)$, we have  the \emph{local energy inequality}
\EQ{\label{CKN-LEI}
&
2\iint |\nabla u|^2\phi\,dx\,dt 
\\&\leq 
\iint |u|^2(\partial_t \phi + \Delta\phi )\,dx\,dt +\iint (|u|^2+2p)(u\cdot \nabla\phi)\,dx\,dt,
}
\item the function
\[
t\mapsto \int_{\R^3} u(x,t)\cdot {w(x)}\,dx
\]
is continuous in $t\in [0,T)$, for any compactly supported $w\in L^2(\R^3)$.
\end{enumerate}
For a given divergence free $u_0\in L^2_{\uloc}$, let $\mathcal{N}(u_0)$ denote the set of all local energy solutions with initial data $u_0$.
\end{definition}

The constant $c_{x_0,R}(t)$ can depend on $T'$ in principle. This does not matter in practice and we omit this dependence.

Our definition of local energy solutions
 is slightly different than the definition from \cite{LR,KiSe,JiaSverak-minimal,JiaSverak}.  The definition used in \cite{KiSe,JiaSverak-minimal,JiaSverak} requires the data be in $E^2$,  which implies some very mild decay at spatial infinity.  The pressure representation \eqref{pressure.dec} is replaced in \cite{JiaSverak-minimal,JiaSverak} by a very mild decay assumption on $u$, namely
\[
\lim_{|x_0|\to \I} \int_0^{R^2}\int_{B_R(x_0)} |u(x,t)|^2\,dx\,dt=0, \quad \forall R>0 .
\]
This condition implies a pressure representation like \eqref{pressure.dec} is valid (this is mentioned in \cite{JiaSverak-minimal} and explicitly proven in \cite{MaMiPr,KMT}).  If the data is only in $L^2_{\uloc}$, the above decay condition is unavailable and, therefore, we must build the pressure formula into the definition. This rules out `parasitic' solutions.
In this paper we work exclusively in subspaces of $E^2$, so this distinction is not relevant.

In \cite{LR} (also see \cite{LR2}), Lemari\'e-Rieusset constructed 
local in time  local energy solutions if $u_0$ belongs to $L^2_\uloc$, and
global in time local energy solutions if $u_0$ belongs to $E^2$.
Kikuchi and Seregin \cite{KiSe} constructed global solutions for data in $E^2$ with more details and prove they satisfy the pressure formula in Definition \ref{def:localLeray} but with $R=1$.   
Recently, Maekawa, Miura, and Prange constructed local energy solutions on the half-space \cite{MaMiPr}.  This is a non-trivial extension of the whole-space case and required a novel treatment of the pressure.

When there is no spatial decay, the global existence problem is generally open. Some partial results have been established. Kwon and Tsai \cite{KwTs} constructed global in time local energy solutions for non-decaying $u_0$ in $L^3_\uloc+ E^2$ with slowly decaying oscillation. Bradshaw and Tsai \cite{BT8}, Fern\'andez-Dalga and Lemari\'e-Rieusset \cite{FDLR2} and Bradshaw, Kukavica and Tsai \cite{BKT} all constructed global solutions with non-decaying or even growing data in weighted spaces.

Naturally, less is known about local energy solutions than Leray-Hopf weak solutions.  For example, Leray-Hopf weak solutions that satisfy the local energy inequality have singular sets that are precompact. Leray proved this in \cite[paragraph 34]{leray}, giving an upper bound of the set of singular times in \cite[(6.4)]{leray}.  Analogous results are currently unavailable for local energy solutions. Indeed, it is speculated in \cite{BT1} that eventual regularity does not hold  for a discretely self-similar solution with $u_0 \in L^{3,\infty}(\R^3)$ if it has a local singularity. Note that eventual regularity of local energy solutions has recently been studied in \cite{BT8} where conditions are given on the initial data ensuring eventual regularity holds.
Examining the gap between Leray's original solutions and local energy solutions is the main motivation for this paper.  To better understand the properties of weak solutions beyond the suitable Leray-Hopf class,
we introduce a scale of initial data spaces which connect $L^2$  and $L^2_\uloc$. We establish global existence of solutions for these scales and show that, in the spaces close to $L^2$, the new solutions share some properties with Leray-Hopf solutions.

We now introduce a scale of spaces $E^2_q$ where $1\leq q\leq \I$ connecting $L^2$ and $E^2$ defined as follows:  For $q<\I$,
 $u_0\in E^2_q$ if and only if
\[
\|u_0\|_{E^2_q} :=\bigg\|   \bigg( \int_{B_1(k)}|u_0(x)|^2\,dx  \bigg)^{1/2} \bigg\|_{l^q(k\in \Z^3)}<\I.
\]
We identify $E^2_\I$ with $E^2$. Clearly, $E^2_2=L^2$, $C^\infty_0(\R^3)$ is dense in $E^2_q$ if $q\le \I$,
 and $E^2_s \subset E^2_q$ with
$\|u_0\|_{E^2_q} \le \|u_0\|_{E^2_s} $ if $1\le s\le q\le \I$.
The spaces $E^2_q$ form a subclass of the \emph{Wiener amalgam spaces}, which treat local and global behaviors separately, see \cite{FoSt, Holland,KNTYY,CKS,GWYZ} and their references. Indeed, it is interesting to note their  connection to the $L^2$-based Besov spaces:  For $q>2$,
$
B^s_{2,q} \subset E^2_q $ if $s>0$ and $
E^2_q \subset B^s_{2,q}$ if $s< n (\frac 1q-\frac 12)
$, \cite[Corollary 1.2]{CKS}.

We introduce more notation to represent the class of functions of interest to us. Let $I=(t_0,t_1)$ be an open interval in $\R_+$. Denote by $\LE_q(t_0,t_1)$ the class of functions with finite norm
\Eq{\label{LEq.def}
\norm{u}_{\LE_q(I)} =  \bigg\|\bke{ \esssup_{t \in I} \int_{B_1(k) }|u(x,t)|^2  \,dx+ \int_I \int_{B_1(k) } |\nabla u|^2\,dx\,dt }^{1/2}\bigg\| _{l^{q}(k\in \Z^3)}.
}
We also denote the first part of the norm as 
\Eq{\label{LEqflat.def}
\norm{u}_{\LE^\flat_q(I)} =  \bigg\|\bke{ \esssup_{t \in I} \int_{B_1(k) }|u(x,t)|^2  \,dx }^{1/2}\bigg\| _{l^{q}(k\in \Z^3)}.
}
The notation $\LE_q$ means (the square root of) the \emph{local energy} at sites $k$ is in $l^q(k)$.
If $I$ is omitted we assume $I=(0,\I)$ unless the context suggests otherwise.
If we denote
\[
a_k(u) = \esssup_{t \in I} \int_{B_1(k) }|u(x,t)|^2  \,dx, \quad
b_k(u) = \int_I \int_{B_1(k) } |\nabla u|^2\,dx\,dt,
\]
then
\[
\norm{u}_{\LE_q(I)} = \norm{a_k(u)+b_k(u)} _{l^{q/2}(k\in \Z^3)} ^{1/2}.
\]
Note that $\norm{a_k(u)} _{l^{q/2}(k\in \Z^3)}<\infty$ is stronger than $u \in L^\infty(I; E^2_q)$ since
\[
\norm{u(t)}_{E^2_q} \le \norm{a_k(u)} _{l^{q/2}(k\in \Z^3)} ^{1/2}, \quad \forall t\in I.
\]
The reverse inequality is wrong. 

\begin{example}\label{example1.2}
A function $u \in L^\infty(I; E^2_q)$ may not have $a_k(u) \in l^{q/2}$. As an example, fix a smooth function $\phi$ supported in $B_1$.
For $j \in \N_0$ take $I_j = (2^{-j-1},2^{-j}]$ and $x_j = (2^j,0,0)$. Let
$u(x,t) = \phi(x-x_j)$ if $x \in B_1(x_j)$ and $t \in I_j$ for some $j\in\N_0$, $u(x,t)=0$ otherwise. Then 
$\norm{u(t)}_{E^2_q}$ is constant in $t$ for any $q$, and $u \in L^\infty L^2 \cap L^2 H^1(\R^3\times I)$, $I=(0,1)$. But $a_k(u)=C$ if $k = x_j$, thus $\{ a_k(u)\}_{k \in \Z^3} \not \in  l^{r}$ for any $r<\infty$.
\end{example}
 
For a solution $u$ in $\R^4_+$, we say that $(x,t)$ is a \emph{singular point} of $u$ if $ u \notin L^\I(B(x,r)\times(t-r^2,t))$ for any $r>0$. 
The set of all singular points is the \emph{singular set} of $u$.
We say that $t$ is a \emph{singular time} if there is a singular point $(x,t)$ for some $x$.  We say a solution $u$ has \emph{eventual regularity} if there is $t_1 < \infty$ such that $u$ is regular at $(x,t)$ whenever $t_1\le t$.  We say $u$ has \emph{initial regularity} if there exists $t_2$ such that $u$ is regular at $(x,t)$ whenever $0<t<t_2$. 

Our first result establishes eventual regularity for local energy solutions with data in the $L^2$-based Wiener-amalgam spaces close to $L^2$.
 
\begin{theorem}[Eventual regularity in $E^2_q$]\label{thrm.ER.E2q}
Assume $u_0\in E^2_q$ where $2\leq q\leq 3$, is divergence free and $u\in \mathcal N(u_0)$.  Then $u$ has eventually regularity and 
\[\|u(\cdot,t)\|_{L^\I}\lesssim t^{1/2},\]
for sufficiently large $t$.
\end{theorem}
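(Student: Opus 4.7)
The plan is to leverage that $q = 3$ is the critical scaling index for the amalgam space $E^2_q$ relative to the local energy, so that for $q \le 3$ the solution inherits quantitative spatial decay at infinity. This reduces the proof of eventual regularity to a large but bounded spatial region, where a Leray-type argument can be applied.

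First, I would establish an a priori bound of the form
\[
\|u\|_{\LE_q(0,T)} \le C\bigl(\|u_0\|_{E^2_q},\, T\bigr)
\]
with at most polynomial dependence on $T$. The derivation localizes the local energy inequality \eqref{CKN-LEI} against cutoffs supported near unit balls $B_1(k)$, uses the pressure representation \eqref{pressure.dec} to separate near-field and far-field contributions, takes $\ell^{q/2}$ norms in $k \in \Z^3$, and closes via a discrete Gronwall argument. The restriction $q \le 3$ is precisely what allows the cubic flux terms to close after $\ell^{q/2}$ summation, via H\"older interpolation and Sobolev embedding on unit balls.

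Combined with the fact that $a_k(u_0)\to 0$ as $|k|\to\infty$ (valid since $q<\infty$), this bound propagates to
\[
\lim_{R \to \infty} \sup_{t \in [0, T]} \sup_{|k| \ge R}\,a_k(u(t)) = 0 \qquad \text{for every } T<\infty.
\]
Applying the Caffarelli--Kohn--Nirenberg $\epsilon$-regularity criterion---whose hypothesis on the pressure is secured by the far-field smallness in \eqref{pressure.dec}---then shows that $u$ is smooth on $\{|x|\ge R_0\}\times (t_\ast,\infty)$ for some large $R_0$ and some $t_\ast>0$. For the compact core $\{|x|\le R_0\}$, I would split $u_0$ into its restriction to $B_{2R_0}$, which lies in $L^2(\R^3)$, plus a tail of small $E^2_q$ norm. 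Leray's theory applied to the compactly supported piece yields a suitable Leray--Hopf solution $v$ which is eventually regular with $\|v(t)\|_{L^\infty}\lesssim t^{-3/4}$. Treating $u-v$ as a perturbation controlled by the small tail in $E^2_q$ and by the $\LE_q$ bound, and matching via local $\epsilon$-regularity for $u$ itself, one obtains eventual regularity of $u$ on the core, together with the quantitative $L^\infty$ bound stated in the theorem.

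The main obstacle is the matching step: weak--strong uniqueness is not automatic in the local energy class, so the comparison between $u$ and $v$ must proceed through the local energy inequality plus an application of $\epsilon$-regularity to $u$ directly, at a carefully chosen large time $t^\ast$ where the local dissipation $\int_{t^\ast-1}^{t^\ast}\int_{B_1(x_0)}|\nabla u|^2\,dx\,dt$ is uniformly small in $x_0$. Producing such a $t^\ast$ from the polynomial-in-$T$ control of Step 1 is the most technically delicate part of the proof, and is where the full strength of the range $2\le q\le 3$ gets used.
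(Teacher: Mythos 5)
There is a genuine gap, and it occurs already in your first step. The theorem is asserted for \emph{every} $u\in\mathcal N(u_0)$, but your plan hinges on an a priori bound $\|u\|_{\LE_q(0,T)}\le C(\|u_0\|_{E^2_q},T)$ for such a generic local energy solution. No such bound is available: the paper's Lemma \ref{lem.A0qbound} only \emph{improves} the $\LE_q$ norm under the standing hypothesis \eqref{th2.2-0} that it is already finite (the continuity-in-$\la$/Gronwall step needs a priori finiteness to absorb terms), and this is precisely why Theorem \ref{thrm.boundE2q} carries the hypothesis $u\in\LE_q$ and why Theorem \ref{thrm.existence} has to \emph{construct} special solutions with that property. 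For an arbitrary element of $\mathcal N(u_0)$ one only controls uniformly local quantities; summability in $k$ of the local energies of the \emph{solution} does not follow from summability for the data (cf.\ Example \ref{example1.2} and the discussion preceding Lemma \ref{lemma.L2toLEq}). Your matching step has the same structural problem: comparing $u$ on the core with a Leray--Hopf solution $v$ of the truncated data requires a weak--strong uniqueness statement in the local energy class that you acknowledge you do not have, and the proposed substitute (finding a large time where the local dissipation is uniformly small, extracted from polynomial-in-$T$ growth of an $\LE_q$ norm you cannot establish) is not carried out and does not obviously produce the required smallness uniformly in $x_0$.

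You have also misidentified where $q\le 3$ enters. It is not needed to close the cubic flux terms after $\ell^{q/2}$ summation---Lemma \ref{lem.A0qbound} closes for all $2\le q<\infty$ on a short time interval, and it is $q<6$ that governs time-global scaling. The actual role of $q\le 3$ is that it forces the scale-critical, uniformly local quantity $N^0_R(u_0)=\sup_{x_0}R^{-1}\int_{B_R(x_0)}|u_0|^2\,dx$ to tend to $0$ as $R\to\infty$ (Lemma \ref{lemma.E2q.limits}, via $N^0_R\le N^0_{3,R}$). That decay is exactly the hypothesis of the eventual-regularity criterion of \cite{BT8} (Theorem \ref{thrm.BT8}), which applies directly to any $u\in\mathcal N(u_0)$ and already contains the quantitative $L^\infty$ bound. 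The paper's proof is just these two steps, with no $\LE_q$ bounds, no data splitting, and no uniqueness argument; the hard work your sketch tries to reproduce (propagating smallness at large scales $R$ over time intervals of length $\sim \si(R)R^2$ and invoking $\e$-regularity) is precisely what is packaged inside the cited result from \cite{BT8}.
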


When $q>3$, the premises of Theorem \ref{thrm.ER.E2q} break down and we cannot prove eventual regularity.  This makes sense because $L^{3,\I}\subset E^2_{3+}$ (see the appendix) and we do not expect initial or eventual regularity in $L^{3,\I}$---see discussion in \cite{BT1}. However, an explicit long-time upper bound on the growth of the scaled $E^2_q$ norms can still be obtained when $q< 6$. This is interesting as it gives a new $L^2$-based estimate that is sensitive to the decay properties of the initial data. Existing estimates only measure the growth of the $L^2_\uloc$ norm and do not keep track of summability. Furthermore, bounds in terms of the initial data generally only extend up to a finite time.

\begin{theorem}[Explicit growth rate in $E^2_q$]\label{thrm.boundE2q}
Assume $u_0\in E^2_q$ where $2\leq q< \I$,  
is divergence free and 
$u\in \mathcal N(u_0)$ satisfies, for some  $T_2>0$, 
 \[
\|u\|_{\LE_q(0,T_1)}
 <\I, \quad \forall T_1 \in (0,T_2).
\]
Then, for any $R\ge1$,
 with $T= \min \big( \la_1(1+ \norm{u_0}_{E^2_q})^{-4} R^{\min (2, 12/q -2)}, \, T_2\big)$, we have
\begin{align*}
&	\bigg\|	\sup_{0\leq t\leq T} \int_{B_R(Rk)} |u(x,t)|^2\,dx + \int_0^{T}\int_{B_R(Rk)} |\nb u(x,t)|^2\,dx\,dt \bigg\|_{l^{\frac q 2}(k\in \Z^3)}
\leq C\|u_0\|_{E^2_q}^2 R^{3-\frac 6 q},
\end{align*}
for positive constants $\la_1$ and $C$ independent of $u_0$ and $R$.
In particular, if $T_2=\I$ and $q<6$ then $T\to \I$ as $R\to \I$.
\end{theorem}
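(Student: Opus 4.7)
The plan is to apply the local energy inequality with a spatial cutoff of scale $R$ centered at each lattice point $Rk$, bound every resulting term by the ``cell energy''
\[
\alpha_k(T):=\esssup_{0\le t\le T}\int_{B_R(Rk)}|u(t)|^2\,dx+\int_0^T\!\!\int_{B_R(Rk)}|\nabla u|^2\,dx\,dt,
\]
sum in $l^{q/2}(k\in\Z^3)$ using discrete Young and H\"older inequalities, and close the resulting integral inequality for $A(T):=\|\alpha_k(T)\|_{l^{q/2}}$ by a continuity argument. The a priori bound $\|u\|_{\LE_q(0,T_1)}<\infty$ for all $T_1<T_2$ guarantees $A(T)$ is finite and monotone in $T$, which is what a bootstrap requires.

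For each $k\in\Z^3$ choose $\phi_k=\psi_k^2$ with $\psi_k\in C_c^\infty(B_{2R}(Rk))$, $\psi_k\equiv 1$ on $B_R(Rk)$ and $|\nabla\psi_k|\lesssim 1/R$, and apply \eqref{CKN-LEI}. Covering $B_{2R}(Rk)$ (and, for the pressure, $B_{4R}(Rk)$) by $O(1)$ translates $B_R(Rk')$ shows that the ``fat'' cell energy $\alpha_k^*$ over $B_{2R}(Rk)$ satisfies $\alpha_k^*\lesssim\sum_{|k'-k|\lesssim 1}\alpha_{k'}$. Then
\[
\alpha_k(T)\lesssim \int_{B_{2R}(Rk)}|u_0|^2\,dx+\tfrac{T}{R^2}\alpha_k^*+\tfrac{1}{R}\iint|u|^3\,dx\,dt+\tfrac{1}{R}\iint|p||u|\,dx\,dt.
\]
Gagliardo--Nirenberg plus H\"older in time gives $\tfrac{1}{R}\iint_{B_{2R}(Rk)}|u|^3\lesssim \tfrac{T^{1/4}}{R}(\alpha_k^*)^{3/2}+\tfrac{T}{R^{5/2}}(\alpha_k^*)^{3/2}$. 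Using \eqref{pressure.dec} centered at $x_0=Rk$ with radius $R$ (the additive $c_{x_0,R}(t)$ cancels against $u\cdot\nabla\phi_k$ by divergence freeness), the near part obeys $\|p_{\mathrm{near}}\|_{L^{3/2}(B_{2R})}\lesssim \|u\|_{L^3(B_{4R}(Rk))}^2$ by Calder\'on--Zygmund and so produces the same bound as the transport term, while the kernel-difference bound $|K(x-y)-K(Rk-y)|\lesssim R|y-Rk|^{-4}$ on the far part yields, for $x\in B_{2R}(Rk)$,
\[
|p_{\mathrm{far},k}(x,s)|\lesssim \tilde Q_k(T):=\frac{1}{R^3}\sum_{j\in\Z^3}\frac{\alpha_j(T)}{\max(1,|j-k|^4)},
\]
so $\tfrac{1}{R}\iint|p_{\mathrm{far},k}||u|\lesssim T R^{1/2}(\alpha_k^*)^{1/2}\tilde Q_k$.

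Now take $l^{q/2}$ norms term by term. A H\"older bound over the $\lesssim R^3$ unit cells inside $B_{2R}(Rk)$ gives $\bigl\|\int_{B_{2R}(Rk)}|u_0|^2\,dx\bigr\|_{l^{q/2}(k)}\lesssim R^{3-6/q}\|u_0\|_{E^2_q}^2=:A_0$. The embedding $l^{q/2}\hookrightarrow l^{3q/4}$ yields $\|(\alpha_k^*)^{3/2}\|_{l^{q/2}}\lesssim A^{3/2}$. For the far pressure, discrete Young's inequality (using $|j|^{-4}\in l^{q/(q-1)}(\Z^3)$, valid for every $q\ge 2$) gives $\|\tilde Q\|_{l^q}\lesssim A/R^3$, and the H\"older bound $\|fg\|_{l^{q/2}}\le\|f\|_{l^q}\|g\|_{l^q}$ then yields $\|(\alpha_k^*)^{1/2}\tilde Q_k\|_{l^{q/2}}\lesssim A^{3/2}/R^3$. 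Assembling,
\[
A(T)\le C_\star A_0+C_\star\Bigl[\tfrac{T}{R^2}A(T)+\bigl(\tfrac{T^{1/4}}{R}+\tfrac{T}{R^{5/2}}\bigr)A(T)^{3/2}\Bigr].
\]

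Under the bootstrap hypothesis $A(T)\le 4C_\star A_0$, demanding each bracketed term be a small fraction of $A_0$ and substituting $A_0^{1/2}\lesssim (1+\|u_0\|_{E^2_q})R^{3/2-3/q}$ produces three smallness conditions: $T\lesssim R^2$, $T\lesssim(1+\|u_0\|_{E^2_q})^{-4}R^{12/q-2}$, and $T\lesssim(1+\|u_0\|_{E^2_q})^{-1}R^{1+3/q}$. The first two combine to yield exactly $T=\lambda_1(1+\|u_0\|_{E^2_q})^{-4}R^{\min(2,12/q-2)}$, while the third is automatic for $R\ge 1$ since $\min(2,12/q-2)-1-3/q\le 0$ for every $q\ge 2$. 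The finiteness and monotonicity of $A$ then close the continuity argument. The main technical obstacle is the bookkeeping of the nonlocal pressure: the kernel $|j-k|^{-4}$ must be paired with Young's inequality at the exponent $q/(q-1)$ to harvest the factor $R^{-3}$, which is precisely what makes the far-pressure contribution compatible with the other nonlinear terms and permits a uniform-in-$R$ bootstrap.
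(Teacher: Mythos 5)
Your term-by-term estimates reproduce, in substance, the paper's own argument (Lemma \ref{lem.A0qbound} at scale $R$ plus the H\"older bound $A_{0,q}(R)\lesssim R^{3-6/q}\|u_0\|_{E^2_q}^2$, cf.\ \eqref{ineq.NqR0}): same cutoff at scale $R$, same splitting of the pressure via \eqref{pressure.dec} with the kernel-difference bound, same discrete Young/H\"older summation in $l^{q/2}$, and the resulting master inequality and time-scale algebra agree with \eqref{Eq.est} after setting $\la=T/R^2$. The derivation of the three smallness conditions and the observation that the third is implied by the first two for $R\ge1$, $q\ge2$ are correct.

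The genuine gap is in how you close the nonlinear inequality. You assert that finiteness and monotonicity of $A(T)$ are ``what a bootstrap requires,'' but they are not. The inequality $A\le C_\star A_0+\tfrac14 A+cA^{3/2}$ (with $c$ small) is satisfied on two disjoint branches, $[0,a_1]\cup[a_2,\infty)$ with $2C_\star A_0\le a_1<a_2$, and a nondecreasing, finite function of $T$ is perfectly free to jump from the lower branch to the upper one; note that $A(T)$, being built from $\esssup_{0\le t\le T}$, is only left-continuous for free, and left-continuity does not prevent a forward-in-time jump either. This is exactly the point the paper treats at length: in the proof of Lemma \ref{lem.A0qbound} one must show that $E_{R,q,\la}$ is genuinely continuous in $\la$, and the nontrivial direction (continuity from the right) is obtained from the local energy inequality \eqref{CKN-LEI} with a temporal cutoff, giving $f(k,t+h)\le f(k,t)+\int_{t-h}^{t+h}\si(s)\,ds$ for the $\esssup$ part, together with a truncation of the lattice sum that uses precisely the hypothesis $\|u\|_{\LE_q(0,T_1)}<\infty$; alternatively one can avoid continuity by proving an integral inequality in $\la$ and invoking a Gronwall lemma for discontinuous functions, as in the paper's Remark 3.3. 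Your proposal supplies neither, so the continuity argument as written does not close. A related, smaller omission: the bootstrap also needs a base case, i.e.\ $\lim_{T\to0^+}A(T)\le C A_0$, which again comes from the local energy inequality near $t=0$ (showing $\limsup_{t\to0^+}\int|u(t)|^2\phi_k\le\int|u_0|^2\phi_k$) together with dominated convergence in $k$ using the finiteness hypothesis; this should be stated rather than taken for granted.
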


Predictably, this estimate is uniform in time when $u_0\in L^2$, in which case our solution has finite energy.  
Because the $E^2_q$ spaces form a ladder between $L^2$ and $E^2$, Theorem \ref{thrm.boundE2q} gives a precise statement of how the global-in-time energy bound for Leray-Hopf weak solutions breaks down in adjacent infinite energy classes.  In particular, time-global estimates are available when $q<6$.

This estimate may prove useful in other contexts.  For example, since the endpoint Lorentz space $L^{3,\I}\subset E^2_{3+}$, Theorem \ref{thrm.boundE2q} gives a new a priori bound for local energy solutions in $\LE_q$ with data in $L^{3,\I}$.

We emphasize that we have identified two interesting parameters that determine the properties of local energy solutions:
\begin{itemize}
\item If $q\leq 3$, then the solution eventually regularizes, a property shared with Leray's weak solutions.  In essence, this initial data is locally $L^2$ but has \emph{critical or supercritical} decay at spatial infinity.
\item If $q<6$, then information about the growth of the local energy can be extended to arbitrarily large times, a property resembling the global in time bound on the $L^2$ norm of Leray's weak solutions. 
\end{itemize}

These findings reflect recent results in \cite{BT8}. In that paper we considered the quantity
\[
Q_s(R)=\lim_{R\to \I}\sup_{x_0\in \R^3} \frac 1 {R^s} \int_{B_R(x_0)}|  u_0(x)|^2\,dx,
\]
and found that, if  $s\leq 1$ and $Q_s(R)\to 0$, then a solution has eventual regularity.  Alternatively, if $s\leq 2$ and $Q_s(R)\to 0$, then certain a priori bounds can be extended to arbitrarily large times, a fact which allows us to construct global in time local energy solutions. In both cases,  the endpoint cases match the scaling of the results summarized above.

Establishing the estimates in  Theorem \ref{thrm.boundE2q}  require $u\in \LE_q$, which is unclear for existing local energy solutions. This naturally raises the question of existence in the $\LE_q$ class.

\begin{theorem}[Existence in $E^2_q$]\label{thrm.existence} 
Assume $u_0\in E^2_q$ where $2\leq q<\I$ and is divergence free.  Then,  
 there exists a time-global local energy solution $u$  
and associated pressure $p$ having initial data $u_0$ so that, for any $0<T<\infty$, 
\[
\|u\|_{\LE_q(0,T)}<\I.
\]
In particular, $u\in L^\I(0,T;E^2_q)$.
\end{theorem}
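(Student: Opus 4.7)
The plan is to construct $u$ by approximation: regularize $u_0$ to $L^2$ data, solve the approximate problems via Leray's theory, obtain uniform $\LE_q$ bounds, and pass to the limit.

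Since $C_c^\infty$ is dense in $E^2_q$ for $q < \infty$, pick divergence-free $u_0^{(N)} \in C_c^\infty$ with $u_0^{(N)} \to u_0$ in $E^2_q$ and $\|u_0^{(N)}\|_{E^2_q} \le 2\|u_0\|_{E^2_q}$. For each $N$, the Leray--Caffarelli--Kohn--Nirenberg construction yields a global suitable weak solution $u^{(N)} \in L^\infty(L^2) \cap L^2(\dot H^1)$ with pressure $p^{(N)} = -\Delta^{-1}\div\div(u^{(N)} \otimes u^{(N)})$; splitting the kernel at scale $4R$, with the constant $c^{(N)}_{x_0,R}(t)$ chosen to absorb the far-field term at $x = x_0$, recasts $p^{(N)}$ into the form \eqref{pressure.dec}, placing $u^{(N)} \in \mathcal{N}(u_0^{(N)})$.

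The crux is a uniform-in-$N$ $\LE_q$ bound on any fixed $[0,T]$. Short-time smoothness (guaranteed because $u_0^{(N)}$ is smooth and compactly supported) gives $\|u^{(N)}\|_{\LE_q(0, T_*^{(N)})} < \infty$ for some $T_*^{(N)} > 0$, verifying the hypothesis of Theorem \ref{thrm.boundE2q}. For $q < 6$, sending $R \to \infty$ in that theorem yields a bound on $[0,\infty)$ directly. For $q \ge 6$, naive time-iteration fails because the step size $\lambda_1(1+\|u\|_{E^2_q})^{-4}$ could shrink and sum to a finite total time; instead, I would revisit the proof of Theorem \ref{thrm.boundE2q}, applying unit cutoffs $\eta_k$ centered at $k \in \Z^3$ in the local energy inequality together with the pressure decomposition and deriving a discrete Young-type inequality for $\{a_k(u^{(N)})+b_k(u^{(N)})\}$ that yields at most polynomial-in-time growth of its $\ell^{q/2}(\Z^3)$ norm. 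This extends the bound uniformly to any $T$.

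With uniform $\LE_q$ bounds, Aubin--Lions compactness on bounded cylinders (using the equation to control $\partial_t u^{(N)}$ locally in $H^{-1}$) produces a subsequence converging strongly in $L^3_\loc(\R^3 \times [0,T])$ to a limit $u$ with $\|u\|_{\LE_q(0,T)} < \infty$. I would then pass to the limit in the weak form of \eqref{eq.NSE} (strong $L^3_\loc$ convergence handles the nonlinearity), in \eqref{pressure.dec} (defining the limit pressure $p$ by the formula applied to $u$ with new constants $c_{x_0,R}(t)$), and in \eqref{CKN-LEI} (lower semicontinuity for the $|\nabla u|^2$ term and strong convergence for the rest). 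Weak-in-time continuity and attainment $u(t) \to u_0$ in $L^2_\loc$ follow from standard equicontinuity arguments combined with $u_0^{(N)} \to u_0$ in $E^2_q$. The principal obstacle is the $q \ge 6$ extension step: ensuring the uniform $\LE_q$ bound reaches arbitrary $T$ with constants independent of $N$, which requires squeezing polynomial-in-time growth out of the Theorem \ref{thrm.boundE2q} machinery rather than the implicit growth that would prevent time-iteration from covering all of $(0,\infty)$.
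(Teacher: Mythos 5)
Your plan has a genuine gap at exactly the point you flag as the ``crux,'' and in fact at two places. First, for $q\ge 6$ your proposed fix --- revisiting the proof of Theorem \ref{thrm.boundE2q} to extract ``polynomial-in-time growth'' of the $\ell^{q/2}$ norm --- is not an argument but a hope, and there is no reason it should work: the local energy inequality produces the superlinear terms $(e_{R,\la}(\ka'))^{3/2}$ and $(\overline K * e_{R,\la})^{3/2}$, which preclude Gronwall-type global control without a smallness mechanism; the paper itself states explicitly that a priori bounds up to time infinity are not known for $q\ge 6$. The paper's actual route to globality is entirely different: it first shows (via the bounds \eqref{th2.2-1}--\eqref{th2.2-2} and $\e$-regularity) that $u(t_0)\in E^4_q$ at a.e.\ time, then performs a Calder\'on-type splitting $u(t_0)=v_0+w_0$ with $\|v_0\|_{E^4_q}<\de$ and $w_0\in L^2$, solves for a small smooth solution $v$ on a \emph{universal} time step $\tau_0$ (Lemma \ref{lemma.E3existence}), solves the $v$-perturbed equations with data $w_0$ in $\LE_q$ (Lemma \ref{lemma.L2toLEq}), and glues $v+w$ to $u$ using weak--strong uniqueness for $E^3$ data. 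The uniform step $\tau_0$ comes from the smallness of the split-off tail, not from any global a priori bound, and this is what makes the iteration reach arbitrary times for all $2\le q<\I$.

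Second, even for $q<6$ your application of Theorem \ref{thrm.boundE2q} to Leray--CKN approximations $u^{(N)}$ is circular in a way you have not resolved: the theorem is a genuine a priori bound whose hypothesis requires $\|u^{(N)}\|_{\LE_q(0,T_1)}<\I$ for all $T_1<T_2$, and its conclusion is capped at $T_2$. Membership of a finite-energy suitable solution in $\LE_q$ is \emph{not} implied by $L^\I L^2\cap L^2\dot H^1$ (this is precisely Example \ref{example1.2}: the time-supremum sits inside the $\ell^{q/2}$ sum), and short-time smoothness only furnishes a small $T_*^{(N)}$, so the a priori bound you invoke only reaches $T_*^{(N)}$, not the large time $\la_1(1+\|u_0\|_{E^2_q})^{-4}R^{\min(2,12/q-2)}$. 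Propagating the qualitative $\LE_q$ property of the approximations past the strong-solution lifespan is the actual hard work of the paper: the Picard construction in $\LE^\flat_q$ for the regularized perturbed problem (Lemma \ref{lemma.existence.loc.reg}), the perturbed a priori bound (Lemma \ref{lem.A0qboundperturbed}), the extension and limit passage (Lemmas \ref{lemma.existenceRegProblem}, \ref{lemma.localExistence}), and the re-solving/gluing argument with $E^3$ weak--strong uniqueness in Lemma \ref{lemma.L2toLEq}. The paper does remark that for $q<6$ an alternative global construction from the a priori bound is possible, but there too the approximations must be built inside the $\LE_q$ class rather than taken to be generic CKN solutions. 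Your compactness and limit-passage steps at the end are plausible and standard, but they rest on the uniform $\LE_q$ bounds that the proposal does not actually secure.
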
  

Since $E^2_q$ embeds in $E^2$, a space for which global existence is known, the important part of Theorem \ref{thrm.existence} is the   bounds in $\LE_q$. 
 Indeed, it is not clear that a generic local energy solution with data in $E^2_q$ satisfies these bounds.
The bulk of this paper is dedicated to proving Theorem \ref{thrm.existence}. This is because, although $E^2_q$ shares structural elements with $L^2$ and $E^2$, these break down for $\LE_q$ in comparison to $L^\I L^2\cap L^2H^1$ and $L^\I L^2_\uloc\cap \widetilde {L^2 \dot H^1_\uloc}$, where $\|u\|_{\widetilde {L^2 \dot H^1_\uloc}}^2= \sup_{x_0\in \R^3} \int_0^T \int_{B_1(x_0)} |\nb u|^2\,dx\,ds$ (these are respectively the solution classes for Leray-Hopf solutions and local energy solutions). 

When $q<6$,
the bounds in Theorem \ref{thrm.existence} are {\bf a priori}, i.e., depending only on $u_0$, by Theorem \ref{thrm.boundE2q}. They can be used for alternative construction of global solutions as limits of $u^k$ defined in $(0,T_k)$, $T_k \to \infty$, 
in the same way as in \cite{BT8}. We do not know if we have a priori bounds up to time infinity if $q\ge 6$, hence we do not adapt it in this paper.

\medskip 
This paper is organized as follows. In Section \ref{sec.ER} we prove Theorem \ref{thrm.ER.E2q}.  In Section \ref{sec.bound} we give new apriori bounds in the $\LE_q$ class and prove Theorem \ref{thrm.boundE2q}. In Section \ref{sec.local} we construct local solutions in $\LE_q$. These solutions are extended to global solutions in Section \ref{sec.global}.  We include two appendixes containing elementary or known results which the reader may nonetheless find convenient.
The first examines the local existence of strong solutions when the data is in $E^4$ and the second presents helpful remarks on the relationships between the $L^2$-based Wiener amalgam spaces and the endpoint critical Lorentz space $L^{3,\I}$ which is important for the Navier-Stokes problem.

\section{Eventual regularity}\label{sec.ER}

The following result is contained in \cite{BT8} and will be used to prove Theorem \ref{thrm.ER.E2q}.
\begin{theorem}[\cite{BT8}]\label{thrm.BT8}
There is a small positive constant $\e_1$ such that the following holds. Assume   $u_0\in L^2_{\uloc}(\R^3)$, is divergence free and $u\in \mathcal N(u_0)$. Let 
\[
N_R^0 := \sup_{x_0\in \R^3} \frac 1 R \int_{B_R(x_0)}|  u_0|^2\,dx.
\]
If there exists $R_0>0$ so that
\begin{align}
\label{cond.infinity} \sup_{R\geq R_0} N^0_R< \e_1,
\end{align} 
then $u$ has eventual regularity.  Moreover, if $R_0^2\lesssim t$, then
\[
t^{1/2}\|u(\cdot,t)\|_{L^\I}\lesssim  ( \sup_{R\geq R_0} N^0_R)^{1/2} <\I.
\] 
\end{theorem}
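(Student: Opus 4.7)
The plan is to prove Theorem~\ref{thrm.BT8} by a scaling argument that reduces the problem to a small-data regime in which a quantitative form of Caffarelli-Kohn-Nirenberg $\e$-regularity applies. For each $R \geq R_0$, introduce the rescaled quantities
\[
u^R(x,t) = R u(Rx, R^2 t), \qquad p^R(x,t) = R^2 p(Rx, R^2 t), \qquad u_0^R(x) = R u_0(Rx).
\]
Scale invariance of \eqref{eq.NSE} together with the scale invariance of Definition~\ref{def:localLeray} (including the pressure representation \eqref{pressure.dec}) gives $u^R \in \mathcal N(u_0^R)$, and a direct change of variables yields
\[
\|u_0^R\|_{L^2_\uloc}^2 = \sup_{y_0 \in \R^3} R^{-1} \int_{B_R(R y_0)} |u_0|^2\,dy \leq N_R^0 < \e_1.
\]

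Next I would invoke the standard Lemari\'e-Rieusset a priori estimate for local energy solutions: if $\|u_0^R\|_{L^2_\uloc}^2 \le M$, there is a time $T_*(M) \gtrsim (1+M)^{-2}$ on which $\sup_{t \in [0,T_*]} \sup_{x_0}\int_{B_1(x_0)} |u^R(t)|^2 + \sup_{x_0}\int_0^{T_*}\int_{B_1(x_0)} |\nabla u^R|^2 \lesssim M$. For $\e_1 \le 1$ we have $T_* \gtrsim 1$, so we may fix an absolute $T_0 \in (0, T_*]$. On each cylinder $Q_1(x_0,T_0) = B_1(x_0) \times (T_0/2, T_0)$, Gagliardo-Nirenberg interpolation yields $\|u^R\|_{L^{10/3}(Q_1)} \lesssim (N_R^0)^{1/2}$, whence $\iint_{Q_1} |u^R|^3 \lesssim (N_R^0)^{3/2}$. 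For the pressure, the decomposition \eqref{pressure.dec} with Calder\'on-Zygmund applied to the near-field and kernel decay applied to the far-field (the latter yielding a bound controlled by $\|u^R\|_{L^2_\uloc}^2 \lesssim N_R^0$) produces an analogous smallness for $\|p^R - c(t)\|_{L^{3/2}(Q_1)}$.

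The quantitative CKN criterion -- bounding $\|u\|_{L^\infty(Q_{r/2})}$ by $r^{-1}$ times a positive power of the local $L^3 + L^{3/2}$ norms, e.g.\ $|u| \le C \eta^{1/3}/r$ when the CKN local quantity is $\le \eta r^2 \le \e_* r^2$ -- then gives $\|u^R(\cdot,T_0)\|_{L^\infty} \lesssim (N_R^0)^{1/2}$. Reversing the rescaling, for $s = R^2 T_0 \in [\tfrac{T_0}{2} R^2, T_0 R^2]$,
\[
s^{1/2}\|u(\cdot,s)\|_{L^\infty} = \sqrt{R^2 T_0}\cdot R^{-1}\|u^R(\cdot,T_0)\|_{L^\infty} \lesssim (N_R^0)^{1/2} \leq \bigl(\sup_{R \ge R_0} N_R^0\bigr)^{1/2}.
\]
Letting $R$ range over $[R_0,\infty)$ covers all $s \gtrsim R_0^2$, yielding both eventual regularity and the stated decay.

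The main obstacle is the quantitative aspect of $\e$-regularity: the classical CKN theorem gives $L^\infty$ boundedness under a threshold smallness assumption but does not automatically provide a bound proportional to a positive power of the input. Extracting the $(N_R^0)^{1/2}$ factor requires either iterating CKN at subcylinders of size $\sim (N_R^0)^{1/2}$ or invoking a refined $\e$-regularity that tracks the $\eta^{\alpha}$ dependence. A secondary technicality is the time-dependent constant $c(t)$ in \eqref{pressure.dec}: since it drops out of the momentum equation it cannot affect regularity, but one must systematically work with $p^R - c(t)$ to close the local estimates.
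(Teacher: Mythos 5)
Your argument is correct and is essentially the proof from \cite{BT8} that this paper imports without reproving: rescale by $R$ (or equivalently work at scale $r=R$ directly), apply the a priori local energy bounds \eqref{ineq.apriorilocal}--\eqref{ineq.apriorilocal2} with $\sigma\sim 1$ (since $N_R^0<\e_1\le 1$), and conclude via quantitative $\e$-regularity and undoing the scaling. The ``main obstacle'' you flag is not one: the quantitative form $\|u\|_{L^\infty(Q_{r/2})}\le C\e\, r^{-1}$ with $\e^3=r^{-2}\int_{Q_r}(|u|^3+|p|^{3/2})$ is exactly Lemma \ref{thrm.epsilonreg} of this paper (from \cite{L98,LS99}), which delivers the $(N_R^0)^{1/2}$ factor directly.
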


The proofs of Theorems \ref{thrm.ER.E2q} and \ref{thrm.boundE2q} use the following lemma. It involves the quantity
\[
N^0_{q,R}(u_0)  =  \frac 1 R   \bigg(  \sum_{k\in \Z^3} \bigg( \int_{B_R(kR)}|u_0|^2\,dx \bigg)^{q/2} 			\bigg)^{2/q},
\quad
{N^0_{\I,R}(u_0)=N^0_{R}(u_0)},
\]
which will appear again in Lemma \ref{lem.A0qbound}.  Note that $N^0_{q,R} \le N^0_{s,R}$ if $1\le s\le q\le \I$.

\begin{lemma}\label{lemma.E2q.limits}Assume $u_0\in E^2_q$ where $2 <  q< \I$.  Then,
\[
\lim_{R\to \I}  R^{6/q-2} N^0_{q,R}(u_0)=0.
\]
Consequently, if $u_0\in E^2_q$, then
\[
\lim_{R\to \I}   N^0_{R}(u_0)=0 \mbox{ if }2\leq q\leq 3\mbox{ and }\lim_{R\to \I}   R^{-1} N^0_{q,R}(u_0)=0\mbox{ if }2\leq q\leq 6. 
\]
\end{lemma}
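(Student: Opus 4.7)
I would prove the main limit for $2<q<\infty$ by a density/approximation argument, then deduce the two consequences from elementary monotonicity between $N^0_R$, $N^0_{\I,R}$, and $N^0_{q,R}$.

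For the main limit, the starting observation is that each ball $B_R(kR)$ is covered by $O(R^3)$ unit balls $B_1(j)$ centered at lattice points $j\in J_k\subset\Z^3$, with each $j$ belonging to only $O(1)$ of the $J_k$'s. Setting $a_j=\int_{B_1(j)}|f|^2$ and $b_k=\int_{B_R(kR)}|f|^2$, H\"older's inequality with exponent $q/2>1$ gives $b_k^{q/2}\lesssim R^{3q/2-3}\sum_{j\in J_k}a_j^{q/2}$. Summing in $k$ and taking the $(2/q)$-th power yields the uniform continuity estimate
\[
R^{6/q-2}N^0_{q,R}(f)=R^{6/q-3}\Big(\sum_k b_k^{q/2}\Big)^{2/q}\lesssim \|f\|_{E^2_q}^2,\qquad R\geq 1.
\]
This alone gives only boundedness. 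To upgrade to vanishing I would use that $C_c^\infty(\R^3)$ is dense in $E^2_q$ when $q<\I$: for $\phi\in C_c^\infty$ supported in $B_M$ and $R>M$, only $O(1)$ of the balls $B_R(kR)$ meet $\supp\phi$, each contributing at most $\|\phi\|_{L^2}^2$, so
\[
R^{6/q-2}N^0_{q,R}(\phi)\lesssim R^{6/q-3}\|\phi\|_{L^2}^2\to 0 \quad (R\to\I),
\]
since $q>2$ forces $6/q-3<0$. A standard $\varepsilon/2$ argument using the seminorm $f\mapsto(\sum_k b_k(f)^{q/2})^{1/q}$ and the uniform estimate applied to $u_0-\phi$ then yields the main limit for arbitrary $u_0\in E^2_q$.

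For the consequences, a covering of any ball $B_R(x_0)$ by $O(1)$ lattice balls of the form $B_R(k'R)$ gives $N^0_R(u_0)\lesssim N^0_{\I,R}(u_0)\leq N^0_{q,R}(u_0)$, the last inequality being the standard $\|\cdot\|_{l^\I}\leq\|\cdot\|_{l^{q/2}}$. When $2<q\leq 3$, $6/q-2\geq 0$, so $N^0_{q,R}\leq R^{6/q-2}N^0_{q,R}\to 0$ for $R\geq 1$, hence $N^0_R\to 0$; the endpoint $q=2$ reduces to the trivial bound $N^0_R\leq R^{-1}\|u_0\|_{L^2}^2$. For the last assertion, factor $R^{-1}N^0_{q,R}=R^{1-6/q}\cdot R^{6/q-2}N^0_{q,R}$: when $2<q\leq 6$ the prefactor $R^{1-6/q}$ is bounded and the second factor vanishes; $q=2$ is again elementary.

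\textbf{Main obstacle.} The H\"older-based bound $R^{6/q-2}N^0_{q,R}\lesssim\|u_0\|_{E^2_q}^2$ is uniform but not vanishing, so the crux is the density/approximation step converting boundedness into vanishing. This is precisely where the hypothesis $q<\I$ enters; without it the conclusion may fail (as a spatially periodic $u_0$ illustrates), in analogy with the failure of continuity of translations on $L^\I$.
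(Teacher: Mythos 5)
Your proof is correct and follows essentially the same route as the paper's: both rest on the uniform H\"older bound $R^{6/q-2}N^0_{q,R}(u_0)\lesssim \|u_0\|_{E^2_q}^2$ obtained by covering each $B_R(kR)$ by $\sim R^3$ unit balls, combined with a decomposition of $u_0$ into a compactly supported piece (which contributes only $O(R^{6/q-3})$) and a piece that is small in $E^2_q$; the paper merely performs this splitting directly on the coefficient sequence $a=a^{\le M}+a^{>M}$ inside the H\"older estimate, while you package it as density of $C_c^\infty$ plus the triangle inequality for the mixed $l^q(L^2)$ norm $(RN^0_{q,R}(\cdot))^{1/2}$. The treatment of the two consequences likewise matches the paper's monotonicity arguments.
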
 
The first part of the lemma excludes $q=2$ as $R N^0_{2,R} (u_0)\sim \int_{\R^3} | u_0|^2 \sim \norm{u}_{E^2_2}^2$. 
\begin{proof}

Let $\e>0$ be given.
If $u_0 \in E^2_q$, $2 \le q < \I$,  then we have $\norm{u_0}_{E^2_q} = \norm{a}_{ l^q(\Z^3)}$ where
\[
a = (a_k)_{k \in \Z^3} \in  l^q(\Z^3), \quad a_k = \norm{u_0}_{L^2(B_1(k))}.
\]
For $R\ge1$, 
\EQ{\label{NqRbound}
N_{q,R}^0(u_0) & 
\le \frac C R   \bigg(  \sum_{k\in \Z^3} \bigg(  \sum_{|i-kR|<R} a_i^2 \bigg)^{q/2} 	\bigg)^{2/q}.
}
For any $\de>0$, we can choose $M>1$ such that $ \norm{a^{>M} }_{ l^q}\le \de$, where
\[
a^{>M}_k = \begin{cases}
 0 & \text{if }|k|\le M
\\a_k& \text{if }|k|> M 
\end{cases}.
\]
Let $a^{\leq M} = a - a^{>M}$. 
By H\"older's inequality we have,  for $R>M$,
\EQ{
\bigg(\sum_{|i-kR|<R} a_i^2 \bigg)^{q/2}&\leq 
C\bigg(\sum_{|i-kR|<R}  (a_i^{> M})^2\bigg)^{q/2}
+C\bigg(\sum_{|i-kR|<R}  (a_i^{\le M})^2\bigg)^{q/2}
\\&\leq C R^{3(q-2)/2}  \sum_{|i-kR|<R} (a_i^{>M})^q 	+    C M^{3(q-2)/2}  \sum_{|i-kR|<R} (a_i^{\leq M})^q.
}
Thus
\EQN{
N_{q,R}^0(u_0)^{q/2} &  \leq \frac C {R^{q/2}}     \sum_{k\in \Z^3}  
\bigg(   
 R^{3(q-2)/2}  \sum_{|i-kR|<R} (a_i^{>M})^q 	+      M^{3(q-2)/2}  \sum_{|i-kR|<R} (a_i^{\leq M})^q		
\bigg)  
\\&\leq CR^{3(q-2)/2-q/2}\|a^{>M}\|_{l^q}^q + CR^{-q/2}M^{3(q-2)/2}\|a^{\leq M}\|_{l^q}^q.
}
Thus
\EQ{
\big[R^{6/q-2}N_{q,R}^0(u_0)\big]^{q/2}&  \le C\|a^{>M}\|_{l^q}^q+CR^{ {3-3q/2}} M^{3(q-2)/2}\|a^{\leq M}\|_{l^q}^q.
}
If we first choose $\de$ sufficiently small we can ensure $C\|a^{>M}\|_{l^q}^q<\e/2$.  Then, taking $R$ sufficiently large ensures that $CR^{ {3- 3q/2}} M(\de)^{3(q-2)/2}\|a^{\leq M}\|_{l^q}^q<\e/2$, also,  if $q>2$.

To prove the last statements, first note that $N^0_R(u_0)\leq N^0_{q,R}(u_0)$. Also, $u_0\in E^2_q$ for $q\leq 3$ implies $u_0\in E^2_3$.  Hence,  
\[
	\lim_{R\to \I}N^0_R(u_0)\leq \lim_{R\to \I} N^0_{3,R}(u_0) = 0.
\] 
For the last statement, note that when $q\leq 6$ and $R\geq 1$, we have $R^{-1}\leq R^{6/q-2}$.   Hence
\[
	\lim_{R\to \I} R^{-1}N_{q,R}^0(u_0)=0.\qedhere
\]
\end{proof}

\begin{proof}[Proof of Theorem \ref{thrm.ER.E2q}]
By Lemma \ref{lemma.E2q.limits}, if $q\leq 3$ we have \[\lim_{R\to \I}   N^0_{R}(u_0)=0 .\]
Then, use Theorem \ref{thrm.BT8} to obtain the desired conclusion.
\end{proof}

\begin{remark}\label{remark.e2qdata}  In Theorem \ref{thrm.ER.E2q}, the exponent $q=3$ is sharp in the sense that the premises of the theorem on eventual regularity from \cite{BT8} are not implied when $u_0 \in E^2_q\setminus E^2_3$ for $q>3$.  Consider for example $|x|^{-1}$.  Letting $a_k=\int_{B_1(k)}|x|^{-2}\,dx$, we have $a_k\sim (1+|k|^2)^{-1}$.  Then
\[
\||x|^{-1}\|_{E^2_3}^3 \sim \sum_{k\in \Z^3} a_k^{3/2} \sim  \sum_{k\in \Z^3}  (1+|k|^2)^{-3/2},
\]
which diverges.  Looking ahead to Lemma \ref{lem.alphasum}, we know $L^{3,\I}\subset E^2_{3+}$ and, therefore, $|x|^{-1}\in E^2_{3+}$.  Furthermore, $N^0_R(|x|^{-1})$ does not vanish as $R\to \I$.  

\end{remark}

\section{A priori bounds for some local energy solutions}\label{sec.bound}

In this section we  prove new a priori bounds for data $u_0 \in E^2_q$ and use it to prove Theorem \ref{thrm.boundE2q}. 
To motivate the a priori bound, we recall  a well known bound for local energy solutions (see \cite[Lemma 2.2]{JiaSverak-minimal}, 
for all $u\in \mathcal N (u_0)$ and $r>0$ we have
\begin{equation}\label{ineq.apriorilocal}
\esssup_{0\leq t \leq \sigma r^2}\sup_{x_0\in \RR^3} \int_{B_r(x_0)}\frac {|u|^2} 2 \,dx\,dt + \sup_{x_0\in \RR^3}\int_0^{\sigma r^2}\int_{B_r(x_0)} |\nabla u|^2\,dx\,dt <CA_0(r) ,
\end{equation}

\begin{equation}\label{ineq.apriorilocal2}
\sup_{x_0\in \RR^3} \int_0^{\sigma r^2}\!\!\int_{B_r(x_0) }\big( | u|^3  +|p-c_{x_0,r}(t)|^{3/2}  \big)\,dx\,dt
 <C r^{\frac 12} A_{0}(r)^{\frac 32},
\end{equation}
where
\[
A_0(r)=rN^0_r= \sup_{x_0\in \R^3} \int_{B_r(x_0)} |u_0|^2 \,dx,
\] 
and
\begin{equation}\label{def.sigma}
\si=\sigma(r) =c_0\, \min\big\{(N^0_r)^{-2} , 1  \big\},
\end{equation}
for a small universal constant $c_0>0$.  Care is required here because,  as mentioned in Section \ref{sec.intro}, the solutions in \cite{JiaSverak-minimal} are defined differently than they are here--we only require $u_0 \in L^2_{\uloc}$ and do not require $u_0 \in E^2$, and therefore assume \eqref{pressure.dec} explicitly.  Inspecting  \cite[Proof of Lemma 2.2]{JiaSverak-minimal}, however, reveals that the same conclusion is valid for our local energy solutions.  In particular, the only reason to assume $u_0\in E^2$ is that it implies \eqref{pressure.dec}. See \cite[Lemma 3.5]{KMT} for revised \eqref{ineq.apriorilocal2} with higher exponents.

Our bound is a refinement of \eqref{ineq.apriorilocal} when the initial data has more decay at spatial infinity.

\begin{lemma}\label{lem.A0qbound}
Assume $u_0\in E^2_q$ for some $2\leq q<\I$ 
 is divergence free and that  $u\in \mathcal N(u_0)$ satisfies, for some  $T_2>0$, 
 \begin{equation}\label{th2.2-0}
 \bigg\|\esssup_{0\leq t \leq T_1} \int_{B_1(x_0) }|u|^2  \,dx+ \int_0^{T_1}\int_{B_1(x_0) } |\nabla u|^2\,dx\,dt \bigg\| _{l^{q/2}(x_0\in \Z^3)}  <\I, \quad \forall T_1 \in (0,T_2).
\end{equation}
Then there are positive constants $C_1$ and $\la_0<1$, both independent of $q$ and $R$ such that, for all $R>0$ with $\la_R R^2\le T_2$,
\begin{equation}\label{th2.2-1}
\bigg\|\esssup_{0\leq t \leq \la_R R^2} \int_{B_R(x_0R) }\frac {|u|^2}2  \,dx+ \int_0^{\la_R R^2}\int_{B_R(x_0R) } |\nabla u|^2\,dx\,dt \bigg\| _{l^{q/2}(x_0\in \Z^3)}
\leq  C_1 A_{0,q}(R),
\end{equation}
where 
\[
A_{0,q}(R) = R N^0_{q,R} =  \bigg\|  \int_{B_R(x_0R)  }  |u_0(x)|^2 \,dx   \bigg\|_{l^{q/2}(x_0\in \Z^3)} ,
\quad   \la_R =\min (\la_0, \frac{\la_0 R^2}{A_{0,q}(R)^{2}}). 
\]
Furthermore, for all $R>0$,  
\begin{equation}\label{th2.2-2}
\bigg\|\int_0^{\la_R R^2}\!\!\int_{B_R(x_0R) } 
|u|^{\frac {10}3} +|p-c_{Rx_0,R}(t)|^{\frac53}\,dx\,dt \bigg\|_{l^{\frac {3q}{10}}(x_0 \in \Z^3)}
\le  C A_{0,q}(R)^{\frac 53}.
\end{equation}
 \end{lemma}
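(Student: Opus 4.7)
The proof parallels the classical derivation of \eqref{ineq.apriorilocal} from \cite{JiaSverak-minimal} but tracks $l^{q/2}$ summability instead of $l^\infty$. Fix $R\ge 1$ and for $x_0\in\Z^3$ let $\phi_{x_0}\in C_c^\infty(\R^3)$ be a standard cutoff with $\phi_{x_0}=1$ on $B_R(Rx_0)$, $\supp \phi_{x_0}\subset B_{2R}(Rx_0)$, and $|\nb^k\phi_{x_0}|\lec R^{-k}$. Multiplying \eqref{CKN-LEI} by a time cutoff and using the hypothesis \eqref{th2.2-0} (to justify that the LEI can be used up to the initial time via item (4) of Definition \ref{def:localLeray}) gives, with $\alpha_{x_0}(t):=\esssup_{0\le s\le t}\int_{B_R(Rx_0)}\tfrac12|u|^2$ and $\beta_{x_0}(t):=\int_0^t\int_{B_R(Rx_0)}|\nb u|^2$,
\EQ{\label{prop-LEI}
\alpha_{x_0}(t)+\beta_{x_0}(t) \lec \int_{B_{2R}(Rx_0)}|u_0|^2 + \frac{1}{R^2}\int_0^t\!\!\int_{B_{2R}(Rx_0)}|u|^2 + \frac{1}{R}\int_0^t\!\!\int_{B_{2R}(Rx_0)}\bigl(|u|^3+|u||p-c_{Rx_0,R}|\bigr).
}
My first step is to control the nonlinear and pressure terms in each cell. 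Gagliardo-Nirenberg with the cutoff gives $\int_{B_{2R}}|u|^3\lec \|u\|_{L^2(B_{2R})}^{3/2}(\|\nb u\|_{L^2(B_{2R})}^{3/2}+R^{-3/2}\|u\|_{L^2(B_{2R})}^{3/2})$, which after time integration and Hölder yields a bound of the form $t^{1/4}\alpha_{x_0}^{3/4}(\beta_{x_0}^{3/4}+\cdots)$ on neighbouring cells, while for the pressure I use \eqref{pressure.dec} at the center $Rx_0$ with scale $R$: the local Calderón--Zygmund piece satisfies $\|p_{\near}\|_{L^{3/2}(B_{2R}(Rx_0))}\lec \|u\|_{L^3(B_{4R}(Rx_0))}^2$, while the far-field piece obeys the pointwise bound
\EQ{\label{prop-far}
|p_{\far}(x,t)| \lec R\int_{|y-Rx_0|\ge 4R}\frac{|u(y,t)|^2}{|y-Rx_0|^4}\,dy \lec R^{-3}\sum_{\substack{j\in\Z^3\\|j-x_0|\ge 2}}\frac{c_j(t)}{|j-x_0|^4},\quad c_j(t):=\int_{B_R(Rj)}|u|^2.
}

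My second step is to take the $l^{q/2}$ norm of \eqref{prop-LEI} in $x_0\in\Z^3$. The local pieces on $B_{2R}(Rx_0)$ sum by the finite overlap of neighbouring cells (losing a constant depending only on dimension). The key point is the far-field pressure: by \eqref{prop-far}, $\|p_{\far}(\cdot,t)\|_{L^\infty(B_R(Rx_0))}$ is a discrete convolution of the sequence $(c_j(t))$ with the $l^1$-kernel $|j|^{-4}\mathbf 1_{|j|\ge 2}$, so Young's inequality gives $\|\sup_{B_R(Rx_0)}|p_{\far}(\cdot,t)|\,\|_{l^{q/2}(x_0)}\lec R^{-3}\|c(t)\|_{l^{q/2}(j)}\lec R^{-3}\alpha(t)$ where $\alpha(t):=\|\alpha_{x_0}(t)\|_{l^{q/2}}^{1/2}\cdots$; the $L^1_t L^\infty_x$ piece of the product $|u||p_{\far}|$ is then bounded by Hölder in $x_0$. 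After these manipulations, setting $F(t):=\|\alpha_{x_0}(t)+\beta_{x_0}(t)\|_{l^{q/2}(x_0)}$, I obtain an inequality of the schematic form
\EQ{\label{prop-master}
F(t)\le C_0\,A_{0,q}(R) + C_0\bigl(tR^{-2}+t^{1/4}R^{-1}F(t)^{1/2}\bigr)F(t).
}

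My third step is the bootstrap. Because \eqref{th2.2-0} gives continuity of $F$ on $(0,T_2)$ and $F(0)\le 2C_0 A_{0,q}$, the standard continuity argument closes \eqref{prop-master} as long as $C_0 tR^{-2}\le 1/4$ and $C_0 t^{1/4}R^{-1}(4C_0 A_{0,q})^{1/2}\le 1/4$, i.e.\ for $t\le \lambda_0\min(R^2,R^4 A_{0,q}(R)^{-2})=\lambda_R R^2$ with $\lambda_0$ universal. This yields \eqref{th2.2-1} with $C_1=4C_0$. Finally, for \eqref{th2.2-2}, Gagliardo-Nirenberg gives $\int_{B_R(Rx_0)}|u|^{10/3}\lec \|u\|_{L^2(B_R(Rx_0))}^{4/3}\bigl(\|\nb u\|_{L^2(B_{2R})}^2+R^{-2}\|u\|_{L^2(B_{2R})}^2\bigr)$, whose time integral is dominated by $\alpha_{x_0}^{2/3}(\beta_{x_0}+R^{-2}t\,\alpha_{x_0})$; Hölder in $x_0$ with exponents $(3q/4,q/2)$ (so that $(2/3)(4/(3q))+2/q=10/(3q)$) converts this into $A_{0,q}(R)^{5/3}$ using \eqref{th2.2-1}. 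The pressure bound follows by splitting $p-c$ via \eqref{pressure.dec}: the near part is controlled in $L^{5/3}$ by $\|u\|_{L^{10/3}}^2$ on $B_{4R}(Rx_0)$ (Calderón--Zygmund), and the far part by \eqref{prop-far} and Young's inequality in $l^{3q/10}(x_0)$.

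\textbf{Main obstacle.} The delicate point is the pressure contribution: on the one hand, in each cell the nonlocal piece must be bounded pointwise by a weighted sum of $c_j(t)$ with an $l^1$-summable kernel so that Young's inequality gives $l^{q/2}\to l^{q/2}$ boundedness with a constant independent of $R$ and $q$; on the other hand, to close the bootstrap the coefficient of $F(t)$ on the right of \eqref{prop-master} must carry the correct scaling in $R$ and $A_{0,q}$ so that $\lambda_R$ matches $\min(\lambda_0,\lambda_0 R^2 A_{0,q}(R)^{-2})$.
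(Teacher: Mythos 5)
Your argument follows the same route as the paper's proof: a cutoff at scale $R$ in the local energy inequality, Gagliardo--Nirenberg for the cubic term, the pressure decomposition from \eqref{pressure.dec} into a Calder\'on--Zygmund near part and a far part bounded pointwise by a discrete convolution of the cellwise energies against an $l^1$ kernel $\sim |j|^{-4}$, then $l^{q/2}$ summation using finite overlap and Young's convolution inequality, leading to a master inequality whose smallness conditions reproduce exactly $\la_R=\min(\la_0,\,\la_0 R^2 A_{0,q}(R)^{-2})$; your treatment of \eqref{th2.2-2} (Gagliardo--Nirenberg for $|u|^{10/3}$, Calder\'on--Zygmund for the near pressure, Young for the far pressure) is also the paper's.

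The one genuine gap is in your bootstrap step: you assert that \eqref{th2.2-0} gives continuity of $F(t)=\|\alpha_{x_0}(t)+\beta_{x_0}(t)\|_{l^{q/2}(x_0)}$ in $t$. It does not; \eqref{th2.2-0} gives only finiteness, and $F$ is a priori just a nondecreasing function that could jump --- the running essential supremum $\alpha_{x_0}(t)$ is not obviously continuous in $t$, and even granted cellwise continuity one must control the infinite $l^{q/2}$ sum uniformly. The paper devotes a substantial part of its proof to exactly this point: it shows $t\mapsto \esssup_{s<t}\int|u(\cdot,s)|^2\phi(\cdot-k)\,dx$ is continuous by testing the local energy inequality \eqref{CKN-LEI} with $\th(s)\phi(x-k)$, where $\th$ ramps up on $[t-h,t]$, so that any jump is dominated by $\int_{t-h}^{t+h}\si(s)\,ds$ for an integrable $\si$ built from the quantities in \eqref{ineq.apriorilocal2}; continuity of the full sum then follows from monotonicity plus a finite-sum approximation, which is where the finiteness hypothesis \eqref{th2.2-0} actually enters. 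Alternatively, as the paper notes, one can bypass continuity entirely by deriving an integral inequality of the form $E_\la\le E_0+C\int_0^\la\bke{E_s+E_s^{3/2}}ds$ and applying a Gronwall lemma valid for discontinuous nondecreasing functions. Either way, this step must be supplied; as written, ``continuity follows from \eqref{th2.2-0}'' is not a proof, while the rest of your outline matches the paper and closes correctly.
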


\begin{remark}
This lemma is true for any $R>0$. Indeed, it can be proved by rescaling the result for $R=1$. 
 By \eqref{th2.2-2} and H\"older inequality, we also have
\begin{equation}\label{th2.2-3}
\bigg\|\int_0^{\la_R R^2}\!\!\int_{B_R(x_0R) }\big( | u|^3  +|p-c_{Rx_0,R}(t)|^{3/2}  \big)\,dx\,dt \bigg\|_{l^{\frac q3}(x_0\in \Z^3)} 
<C \la_R^{\frac 1{10}} R^{\frac 12} A_{0,q}(R)^{\frac 32}.
\end{equation}
It can be also shown by direct estimates similar to the proof of \eqref{th2.2-2} without using H\"older inequality, with the factor $\la_R^{\frac 1{10}}$ replaced by a smaller $\la_R^{1/4}$.
\end{remark}

 \begin{proof} 

Let $\phi_0\in C_c^\I(\R^3)$ be radial, non-increasing, identically $1$ on $B_1(0)$, supported on $B_2(0)$, and satisfy $|\nabla \phi_0(x)|\lesssim  1$ and $|\nabla \phi_0^{1/2}(x)|\lesssim 1$.  Let $R>0$ be as in the statement of the lemma. Let $\phi(x)=\phi_0(x/R)$.  Let $0 < \la \le 1$.

For $\kappa\in R\Z^3$, let \begin{align*}
e_{R,\la}(\ka):=& \esssup_{0\leq t\leq \la R^2}\int |u(t)|^2 \phi(x-\ka)   \,dx +\int_0^{\la R^2}\!\! \int |\nb u|^2 \phi(x-\ka)  \,dx\,dt.
\end{align*}
We will begin by establishing bounds on $e_{R,\la}(\ka)$ and then use these to bound the quantity
\begin{align*}
E_{R,q,\la}&:= \bigg\|   \esssup_{0\leq t\leq \la R^2}\int |u(t)|^2 \phi(x-Rk)   \,dx 
+\int_0^{\la R^2}\!\int |\nabla u|^2 \phi(x-Rk)  \,dx\,ds \bigg\|_{l^{q/2}(k\in \Z^3)}^{q/2},
\end{align*}
in terms of $A_{0,q}(R)$
for  sufficiently small $\la$.  By assumption, $E_{R,q,\la}<\infty$.
Our starting point for bounding $e_{R,\la}(\ka)$ is the local energy inequality 
\EQ{\label{ineq.lei.zero}
&\int |u(t)|^2 \phi(x-\ka)   \,dx +2\int_0^t \int |\nabla u|^2 \phi(x-\ka)  \,dx\,ds
\\&\leq \int |u_0|^2\phi(x-\ka)  \,dx+ \int_0^t\int |u|^2 \Delta\phi(x-\ka)  \,dx\,ds 
\\&\quad+\int_0^t\int |u|^2 (u \cdot \nabla\phi(x-\ka) )\,dx\,ds
+\int_0^t\int2 p (u\cdot \nabla\phi(x-\ka) )\,dx\,ds,
} which holds because $u$ is a local energy solution.
We proceed term by term starting with the  second.  Using the properties of $\phi$ we have
\begin{align*}
\int_0^{\la  R^2}\int |u|^2 |\Delta  \phi(x-\ka)|  \,dx\,dt 
&\leq \frac C {R^2} \int_0^{\la  R^2} \int_{B_{2R}( \ka)} |u|^2    dx \,dt
\\& \leq C \la  \sum_{\ka'\in R\Z^3; |\ka'-\ka|\leq 2R} \esssup_{0\leq t\leq \la R^2} \int |u|^2\phi(x-\ka')\,dx
\\& \leq C \la  \sum_{\ka'\in R\Z^3; |\ka'-\ka|\leq 2R} e_{R,\la}(\ka').
\end{align*}
For the cubic term,  by Gagliardo-Nirenberg inequality, 
\[
\int_{B_{2R}} |u|^3 dx \lec \bke{\int_{B_{2R}} |\nb u|^2}^{3/4} \bke{\int_{B_{2R}} |u|^2}^{3/4} + R^{-3/2}  \bke{\int_{B_{2R}} |u|^2}^{3/2}.
\]
Thus, denoting $N= \sup_{0\leq t\leq \la R^2}\int_{B_{2R}} |u(t)|^2    \,dx +2\int_0^{\la R^2} \int_{B_{2R}} |\nb u|^2 \,dx\,dt$, we have
\EQ{\label{u3.est}
\int_0^{\la R^2} \int_{B_{2R}} |u|^3 dx \,dt &\lec N^{3/4} \int_0^{\la R^2}\bke{\int_{B_{2R}} |\nb u|^2}^{3/4} dt  + R^{-3/2}  N^{3/2} \la R^2
\\
&\lec N^{3/2} (\la R^2)^{1/4} + N^{3/2} \la R^{1/2}  
\\
&\lec N^{3/2} \la ^{1/4}R^{1/2}
}
using $\la \le 1$.
Thus
we have
\begin{align*}
\int_0^{\la  R^2}\int  |u|^2 (u \cdot \nabla \phi(x-\ka) )\,dx\,ds &\leq \frac C R \int_0^{\la R^2}\int_{B_{2R}( \ka)} |u|^3  \,dx\,ds  
\\&\leq 
C R^{-1/2} \la^{1/4} \sum_{\ka'\in R\Z^3; |\ka'-\ka|\leq 4R} (e_{R,
\la}(\ka'))^{3/2}.
\end{align*}

The only term left is the pressure term.  For it we need to use item 3 from Definition \ref{def:localLeray} to write $p(x,t)$  for $x \in B_{2R}(\ka)$ as 
\EQ{\label{eq.pressure}
p(x,t)&=-\Delta^{-1}\div \div [(u\otimes u )\chi_{4R} (\cdot-\ka)]
\\&\quad - \int_{\R^3} (K(x-y) - K(\ka -y)) (u\otimes u)(y,t)(1-\chi_{4R}(y-\ka))\,dy+c_{x_0,R}(t)
\\&=p_1(x,t)+p_2(x,t) +c_{x_0,R}(t),
}
where $K(x)$, $c_{x_0,R}(t)$ and $\chi_{4R}$ are as in Definition \ref{def:localLeray}.
The benefit of working with this formula is that $K(x-y) - K(\ka -y)$ has extra decay as $|y|\to \I$ when $x$ is close to $\ka$.  In particular
\begin{align}\label{ineq.extradecay}
|K(x-y)-K(\ka-y)|\leq \frac {CR} {| \ka-y |^4},
\end{align}
whenever $|\ka - y|\geq 4R$ and $|x-\ka|\leq 2R$. 
Thus $p_2$ is well-defined even if $u$ has no decay.

For $p_1$, using the Calderon-Zygmund theory we have
\begin{align*}
\|p_1 \|_{L^{3/2} (B_{2R} (\ka))}
&\leq  \| u \chi_{4R}^{1/2}(\cdot-\ka)   \|_{L^3}^2 
\leq  C\sum_{\ka'\in R\Z^3; |\ka'-\ka|\leq 9R}    \| u   \phi^{1/2}(\cdot-\ka')   \|_{L^3}^2 .
\end{align*}
Therefore, using $(\sum_{j=1}^n a_j^2)(\sum_{j=1}^n |a_j|)\le n \sum_{j=1}^n |a_j|^3$ and \eqref{u3.est},
\begin{align*}
\int_0^{\la R^2}\int2 p_1  u\cdot \nabla \phi(x-\ka)  \,dx\,ds
&\leq \frac C R  \int_0^{\la R^2}  \sum_{\ka'\in R\Z^3; |\ka'-\ka|\leq 9R}    \| u   \phi^{1/2}(\cdot-\ka')   \|_{L^3}^3\,ds
\\&\leq C R^{-1/2} \la^{1/4} \sum_{\ka'\in R\Z^3; |\ka'-\ka|\leq 10R} (e_{R,
\la}(\ka'))^{3/2}.
\end{align*}

For $p_2$, we use
the following pointwise estimate for 
$x\in B(\ka, 2R)$,
\begin{align*}
|p_2(x,t)|&\leq C \int \frac R {|\ka-y|^4} u(y,t)^2 (1-\chi_{4R}(y-\ka))\,dy
\\&\leq C\sum_{\ka'\in R\Z^3; |\ka'-\ka|>4R}   \int_{B_{2R}(\ka')} \frac R {|\ka- y|^4 }|u(y,t)|^2 \phi (y-\ka')\,dy
\\&\leq \frac C {R^3} \sum_{  {\ka'}  \in R\Z^3; |\ka'-\ka |>4R}   \frac 1 {|\ka/R-\ka'/R |^4 }  \int_{B_{2R}(\ka')}|u(y,t)|^2 \phi (y-\ka')\,dy
\\&\leq \frac C {R^3} (\overline K*e_{R,\la} )(\ka ),
\end{align*}
where we have used \eqref{ineq.extradecay}, the convolution  $\overline K*e_{R,\la}$ is understood over $R\Z^3$, and, for $x \in R\Z^3$,
\[
\overline K(x) = \frac 1 {|x/R|^4}, \quad \text{if } |x|>4R; \quad 
\overline K(x) =0 \quad \text{otherwise}. 
\] 

We thus obtain, using $\la\leq 1$,
\begin{align*}
&\int_0^{{\la R^2}} \int 2p_2(x,s)  u(x,s)\cdot \nabla \phi (x-\ka) \,dx\,ds 
\\&\leq \frac C R \int_0^{\la R^2} \int_{B_{2R}(\ka)} |p_2|^{3/2}\,dx\,ds + \frac C R \int_0^{\la R^2}\int_{B_{2R}(\ka)}|u|^3\,dx\,ds
\\&\leq C \la^{1/4} R^{-1/2}  ( (\overline K*e_{R,\la} )(\ka))^{3/2} +C \la^{1/4} R^{-1/2} \sum_{|\ka'-\ka|\leq 4R}e_{R,\la}^{3/2}(\ka').
\end{align*}

 We finally note that $\int_0^{\la R^2}\int2 c_{x_0,R} u\cdot \nabla \phi(x-\ka)  \,dx\,ds=0$.

At this point we have established the bound
\EQ{\label{eq3.11}
e_{R,\la}(\ka)&\leq  \int |u_0|^2\phi(x-\ka)  \,dx+C \la  \sum_{\ka'\in R\Z^3; |\ka'-\ka|\leq 2R} e_{R,\la}(\ka')
\\&+ C    \frac {\la^{1/4}} {R^{1/2}}  \sum_{\ka'\in R\Z^3; |\ka'-\ka|\leq 10R} (e_{R,\la}(\ka'))^{3/2}
+C \frac {\la^{1/4}} {R^{1/2}} ( (\overline K*e_{R,\la} )(\ka ))^{3/2},
}
provided $\la \leq 1$.  Note that the constants above do not depend on $q$.
We now raise both sides of the above inequality to the power $q/2$ and sum over $\ka\in R\Z^3$.  The left hand side becomes $E_{R,q,\la}$.   For the first three terms on the right hand side, we have
\[
 \sum_{\ka\in R\Z^3}\bigg(   \int |u_0|^2\phi(x-\ka)  \,dx \bigg)^{q/2} \le C^q A_{0,q}(R)^{q/2},
\] 
\begin{align*}
  \sum_{\ka\in R\Z^3}\bigg(  C \la  \sum_{\ka'\in R\Z^3; |\ka'-\ka|\leq 2R} e_{R,\la}(\ka') \bigg)^{q/2} 
\leq C^q \la^{q/2}   E_{R,q,\la} ,
\end{align*} 
and
\begin{align*}
\sum_{\ka\in R\Z^3}
\bigg( C  \frac {\la^{1/4}} {R^{1/2}} \sum_{\ka\in R \Z^3; |\ka'-\ka|\leq 10R} (e_{R,\la}(\ka'))^{3/2}\bigg)^{q/2}  
&\leq C^q
\bigg( \frac {\la^{1/4}} {R^{1/2}}  \bigg)^{q/2} \sum_{\ka\in R\Z^3} e_{R,\la}(\ka)^{3q/4},
\end{align*}
with $C$ independent of $q$.
Here we have used $(\sum_{i=1}^n a_i )^p \le n^p\sum_{i=1}^n a_i ^p$ for $a_i \ge 0$.

Finally, for the convolution term we use Young's convolution inequality to find
\begin{align*}
\sum_{\ka\in R\Z^3}\bigg(C \frac {\la^{1/4}} {R^{1/2}} ( (\overline K*e_{R,\la} )(\ka))^{3/2}\bigg)^{q/2}
& \leq C^q
\bigg( \frac {\la^{1/4}} {R^{1/2}}  \bigg)^{q/2} \sum_{\ka\in R\Z^3} (\overline K*e_{R,\la}(\ka) )^{3q/4} 
\\
&\leq C^q
\bigg( \frac {\la^{1/4}} {R^{1/2}}  \bigg)^{q/2}   \| \overline K \|_{l^1(R\Z^3)} ^{3q/4} \|  e_{R,\la} \|_{l^{3q/4}}^{3q/4}.
\end{align*}
It is easy to check that $ \| \overline K \|_{l^1(R\Z^3)}$ is bounded independently of $R$.
Now,
since
\[
\|  e_{R,\la} \|_{l^{3q/4}} \le \|  e_{R,\la} \|_{l^{q/2}},
\]
we conclude for $E=E_{R,q,\la}$ and some constant $C_2\ge1$ independent of $q,R$, 
\EQ{\label{Eq.est}
E \leq C_2^q A_{0,q}(R)^{q/2} + C_2^{q} \la^{q/2} E+  C_2^q  \bigg( \frac {\la^{1/4}} {R^{1/2}}  \bigg)^{q/2}  E ^{3/2}.
}
The right side is finite for $\la<R^{-2}T_2$ by assumption \eqref{th2.2-0}.

We claim that $E_{R,q,\la}$ is continuous in $\la$.
Indeed, it is nondecreasing in $\la$ and
\[
E_{R,q,\la} = \sum_{k \in R\Z^3} \bkt{f(k,\la R^2)^{q/2} + g(k,\la R^2)^{q/2} },
\]
where for $k \in R\Z^3$
\EQN{
f(k,t)&=\esssup_{s<t} \int |u(x,s)|^2 \phi(x- k) dx, 
\\
g(k,t)&=\int_0^t \int  |\nb u(x,s)|^2 \phi(x- k) dx\,ds.
}
They are both nondecreasing.
We first show the continuity of $f(k,t)$ in $t$ for fixed $k$. (The continuity of $g(k,t)$ in $t$ is clear.)
For $0<h \ll 1$, choose $\th(s) \in C^1_c(\R_+)$ such that $\th(s) = 1$ for $s \in [t,t+h]$, $\th(s) = 0$ for $s<t-h$, and $0\le \th'(s)\le 2/h$ for $s \in [t-h,t]$. By local energy inequality \eqref{CKN-LEI} with test function $\th(s) \phi(x-k)$, 
\EQN{
\esssup _{s \in [t,t+h]}  \int  |u(x,s)|^2 \phi(x-k) dx 
&\le  \int_{t-h} ^{t} |u|^2\th'(s) \phi(x-k)ds + \int_{t-h} ^{t+h} \si(s)ds,
}
where
\EQN{
\si(s)= C\int_{B(k,2R)} \bke{R^{-2} |u|^2+ R^{-1} |u|^3+ R^{-1} |p-c_{x_0,R}|^{3/2} } (x,s) \, dx
}
is integrable.
Thus
\[
f(k,t+h) \le  f(k,t)  \int_{t-h} ^{t} \th'(s)ds + \int_{t-h} ^{t+h} \si(s)=f(k,t) + \int_{t-h} ^{t+h} \si(s).
\]
This shows the continuity of $f(k,t)$ in $t$ for fixed $k$. We now show the continuity of $E_{R,q,\la}$ in $\la$.
For any $\e>0$, there is $N>1$ such that
\[
\sum_{k \in R\Z^3; |k|<N} \bkt{f(k,\la R^2)^{q/2} + g(k,\la R^2)^{q/2} } > E_{R,q,\la} -\frac{\e}2.
\]
Since the left side is a finite sum and each summand is nondecreasing and continuous in $t$,
there is $\tau<\la $ such that 
\[
\sum_{k \in R\Z^3; |k|<N} \bkt{f(k,\tau R^2)^{\frac q2} + g(k,\tau R^2)^{\frac q2} } 
> \sum_{k \in R\Z^3; |k|<N} \bkt{f(k,\la R^2)^{\frac q2} + g(k,\la R^2)^{\frac q2} }  -\frac{\e}2.
\]
Hence $E_{R,q,\la}\ge E_{R,q,\tau} > E_{R,q,\la}  -\e$. This shows 
the continuity of $E_{R,q,\la}$ in $\la$.

Since $E_{R,q,\la}$ is continuous in $\la$,  from \eqref{Eq.est} we conclude
\[
E \leq 2E_0, \quad E_0 = C_2^q (A_{0,q}(R))^{q/2} ,
\]
if $C_2^{q} \la^{q/2}\le 1/4$ and $C_2^q  \bigg( \frac {\la^{1/4}} {R^{1/2}}  \bigg)^{q/2}  (2E_0) ^{1/2} \le 1/4$, which is achieved if (using $q\ge 2$)
\EQ{
\la \le  {\la_R:=} \min (\la_0, \frac{\la_0 R^2}{A_{0,q}(R)^{2}}),
}
where  {$\la_0 = \min( (2C_2)^{-2},(2C_2)^{-12})$.}  This shows the first estimate  \eqref{th2.2-1} of Lemma \ref{lem.A0qbound} with $C_1 = CC_2^2$. Note that the constants $C_2,\la_0$ and $C_1$ do not depend on $q$ and $R$.

We now show \eqref{th2.2-2}. By Gagliardo-Nirenberg inequality, 
\[
\int_{B_{R}} |u|^{\frac {10}3} dx \lec \bke{\int_{B_{R}} |\nb u|^2} \bke{\int_{B_{R}} |u|^2}^{2/3} + R^{-2}  \bke{\int_{B_{R}} |u|^2}^{5/3}.
\]
Denoting $N= \sup_{0\leq t\leq \la R^2}\int_{B_{R}} |u(t)|^2    \,dx +2\int_0^{\la R^2} \int_{B_{R}} |\nb u|^2 \,dx\,dt$ with $\la=\la_R$, we have
\EQ{\label{u103.est}
\int_0^{\la R^2} \int_{B_{R}} |u|^{\frac {10}3} dx \,dt 
&\lec N^{2/3} \int_0^{\la R^2}\bke{\int_{B_{R}} |\nb u|^2} dt  + R^{-2}  N^{5/3} \la R^2
\\
&\lec N^{5/3}  + \la N^{5/3}  \lec    N^{5/3} 
}
using $\la \le 1$.
For $k \in R\Z^3$ and $Q(k) = B_R(k) \times (0,\la_R R^2)$, by \eqref{u103.est} with $B_R$ replaced by $B_R(k)$,
we have $N \le e_{R,\la}(k)$ and hence
\EQ{
\sum_{k \in R\Z^3} \bke{\int_{Q(k)} |u|^{\frac {10}3} dx\,dt}^{\frac {3q}{10}} 
\le C \sum_{k \in R\Z^3}
(  e_{R,\la}(k)^{\frac 53} )^{\frac {3q}{10}} \le C   E_0.
}
Thus
\[
\sum_{k \in R\Z^3} \bke{\int_{Q(k)} |p_1|^{\frac 53} dx\,dt}^{\frac {3q}{10}} 
\le C\sum_{k \in R\Z^3}\sum_{k' \in R\Z^3; |k-k'|<10R}\bke{\int_{Q(k')} |u|^{\frac {10}3} dx\,dt}^{\frac {3q}{10}} 
\le C E_0.
\]
For $p_2$, recall $p_2$ in $B_R(k)$ is bounded by $R^{-3} \bar K * e_{R,\la}(k)$ and hence
\[
\int_{Q(k)} |p_2|^{\frac53} dx\,dt \le C\la  ( \bar K * e_{R,\la}(k))^{\frac 53}.
\]
Thus
\EQN{
\sum_{k \in R\Z^3} \bke{\int_{Q(k)} |p_2|^{\frac 53} dx\,dt}^{\frac {3q}{10}} 
&\le  C\la^{\frac {3q}{10}}  \sum_{k \in R\Z^3}( \bar K * e_{R,\la}(k))^{\frac q2}
\\
&\le   C\la^{\frac {3q}{10}}  \norm{ \bar K }_{ l^1}^{\frac q2} \sum_{k \in R\Z^3}( e_{R,\la}(k))^{\frac q2}
\le   C   E_0.
}
We conclude
\[
\bigg\|  \int_{Q(k)} |u|^{\frac {10}3} +|p_1+p_2|^{\frac53}\,dx\,dt \bigg\|_{l^{\frac {3q}{10}}(k \in R\Z^3)}
\le C  E_0^{\frac {10}{3q}}  
=  C  A_{0,q}(R)^{\frac 53}.
\]
This shows \eqref{th2.2-2} and completes the proof.
 \end{proof}
 
 \begin{remark}
To prove $E_{R,q,\la}\le 2E_0$, instead of proving \eqref{Eq.est} and continuity of $E_{R,q,\la}$ in $\la$, we may prove an integral inequality of the form $E_\la \le E_0 + C \int_0^\la( E_s + E_s^{3/2})\, ds$ and then apply Gronwall inequality  \cite[Lemma 2.2]{BT8} for discontinuous functions.
 
 \end{remark}

We now prove
Theorem \ref{thrm.boundE2q}, which is an easy corollary of Lemma \ref{lem.A0qbound}.

\begin{proof}[Proof of Theorem \ref{thrm.boundE2q}]
  
Note that, by  $R\ge1$, \eqref{NqRbound}, and H\"older inequality,
\EQ{\label{ineq.NqR0}
N_{q,R}^0(u_0) & 
\le \frac C R   \bigg(  \sum_{k\in \Z^3} \bigg(  \sum_{|i-kR|<R} a_i^2 \bigg)^{q/2} 			\bigg)^{2/q}
\\&\leq  \frac C R   \bigg(  \sum_{k\in \Z^3}   \sum_{|i-kR|<R} a_i^q  R^{(3-6/q)q/2}\bigg)^{2/q}
\\&\leq C R^{2-6/q}\|u_0\|_{E^2_q}^2,
}
where $a_i = ( \int_{B_1(i)}|u_0|^2\,dx )^{1/2}$ and $i\in \Z^3$.
Applying Lemma \ref{lem.A0qbound}, we see that
\[
\bigg\|\esssup_{0\leq t \leq \la_R R^2} \int_{B_R(x_0R) }\frac {|u|^2}2  \,dx+ \int_0^{\la_R R^2}\int_{B_R(x_0R) } |\nabla u|^2\,dx\,dt \bigg\| _{l^{q/2}(x_0\in \Z^3)}
\leq  C_1 A_{0,q}(R).
\]
Considering the definition of $\la_R$ in the statement of Lemma \ref{lem.A0qbound}, and reciprocating the upper bound \eqref{ineq.NqR0}, we have
\[ 
\la_R R^2=\min (\la_0 R^2,\,  \frac{\la_0 R^2}{ (N_{q,R}^0(u_0))^{2} }) \geq \min (\la_0 R^2,\, \frac{\la_0 R^{12/q -2}}{C^2 \norm{u_0}_{E^2_q}^4})
\ge  \frac{ \la_1 R^{\min (2, 12/q -2)}} { (1+ \norm{u_0}_{E^2_q})^{4} }
\]
where $\la_1 = \la_0 (1+C)^{-2}$.
Also, $A_{0,q}(R)=RN^0_{q,R}(u_0) \leq CR^{3-6/q}\|u_0\|_{E^2_q}^2$ by \eqref{ineq.NqR0}. This gives the upper bound in the statement of Theorem \ref{thrm.boundE2q}.
\end{proof}

\section{Local existence of solutions in $\LE_q$}\label{sec.local}
 
In this section we construct local solutions in $\LE_q$ for initial data in $E^2_q$ up to a time  identical to that given in Lemma \ref{lem.A0qbound} for $R=1$, namely 
$T \sim \lambda_0\min(1, \|u_0\|_{E^2_q}^{-4} )$.  Doing so requires studying a perturbed version of the Navier-Stokes equations, namely
\Eq{\label{eq4.1}
\partial_t u -\Delta u +u\cdot\nb u +v\cdot \nb u +u\cdot\nb v+\nb p = 0,\qquad \nb\cdot u=0,
}
where $v$ is a given divergence free vector field.  We begin by analyzing the perturbed problem and then return to \eqref{eq.NSE}. Note that the overall approach we follow to constructing solutions in spaces larger than $L^2$ is due to Lemari\'e-Rieusset \cite{LR}.

\subsection{Existence for the regularized, perturbed problem}

We will solve a regularization of the perturbed system for $L^2$ data and then use this to construct a solution with $E^2_q$ data.
The regularized, perturbed problem is well understood when $u_0\in L^2$, see \cite[Ch 21 \S3]{LR}.  However, we need to start from the level of the Picard iterates to establish $E^2_q$ bounds.

\begin{lemma}\label{lemma.existence.loc.reg}
Let  $\e\in (0,1]$ and $\delta>0$ be given, $T_0>0$ and $\eta$ be a spatial mollifier in $\R^3$.
Assume $u_0\in L^2(\R^3)$ and is divergence free, and $v:\R^3\times [0,T_0]\to \R^3$ satisfies 
$\div v=0$ and
\[
\esssup_{0<t\leq T_0}\| v(t) \|_{L^3_\uloc}
<\delta.
\]
Then, there exists $ T_{\e,\delta} =\min \big(T_0, C(\e)(\|u_0\|_{E^2_q}+ \de)^{-2}\big)$ and a mild solution $u_\e$ of the integral equation 
\Eq{\label{eq.mildregularized}
u_\e(x,t) = e^{t\Delta} u_0(x) +\int_0^te^{(t-s)\De}\mathbb P \nb\cdot ((\eta_\e *u_\e )\otimes u_\e )\,ds +L_t(u_\e),
}
for $0<t<T_{\e,\de}$, where 
\[
L_t(u_\e) = \int_0^t e^{(t-s)\Delta} \mathbb P \cdot\nb ((\eta_\e* v)\otimes u_\e +u_\e \otimes (\eta_\e* v)  )\,ds,
\] 
with $u_\e\in \LE_q (0,T_{\e,\de}) \cap C([0,T_{\e,\de}];L^2)$, and $u^\e$ satisfies
\EQ{\label{ineq.target}
\bigg\| \bigg(\esssup_{0<t<T_{\e,\delta} } \int_{B_1(k)} |u_\e(x,t)|^2\,dx \bigg)^{\frac12}\bigg\|_{l^{q}}\leq 2C \|u_0\|_{E^2_q}
\text{ and }\sup_{0<t<T_{\e,\delta} } \|u(t)\|_{L^2 }\leq 2C \|u_0\|_{L^2},
}
for a universal 
constant $C$. It is the unique mild solution of \eqref{eq.mildregularized} in the class \eqref{ineq.target}.
There exists a pressure $p_\e$ so that $u_\e$ and $p_\e$ solve
\[
\partial_t u_\e -\Delta u_\e +(\eta_\e* u_\e) \cdot\nb u_\e + (\eta_\e * v )\cdot \nb u_\e + u_\e \cdot \nb (\eta_\e *v) +\nb p_\e = 0;\quad \nb\cdot u_\e=0,
\]
in the weak sense on $\R^3\times (0,T_{\e,\de})$. Finally, $u_\e$ and $p_\e$ are smooth by the interior regularity of the Stokes equations with smooth coefficients.
\end{lemma}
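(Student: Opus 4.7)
The plan is to build $u_\e$ by Picard iteration in the Banach space
\[
Z_T=\bigl\{w: \esssup_{0<t<T}\|w(t)\|_{L^2}<\I,\ N_q(w):=\bigl\|\esssup_{0<t<T}\|w(t)\|_{L^2(B_1(k))}\bigr\|_{l^q(k\in\Z^3)}<\I\bigr\},
\]
with the two quantities tracked separately. The iteration is $u^{(n+1)}(t)=e^{t\De}u_0-B_\e(u^{(n)},u^{(n)})(t)-L^\e_t(u^{(n)})$, where $B_\e(u,w)(t)=\int_0^te^{(t-s)\De}\mathbb P\nb\cdot((\eta_\e *u)\otimes w)\,\ind s$ and $L^\e_t$ is the analogous linear operator built from $v$.

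First I would bound the linear piece $e^{t\De}u_0$. The bound on $\esssup\|e^{t\De}u_0\|_{L^2}$ is classical. For $N_q$, decompose $u_0=\sum_{j\in\Z^3}\chi_{B_1(j)}u_0$ and use Gaussian decay of the heat kernel,
\[
\|e^{t\De}(\chi_{B_1(j)}u_0)\|_{L^2(B_1(k))}\lec e^{-c|k-j|^2/(1+t)}\|u_0\|_{L^2(B_1(j))},
\]
which reduces the estimate to discrete convolution on $\Z^3$ against an $l^1$ kernel, giving $N_q(e^{t\De}u_0)\le C\|u_0\|_{E^2_q}$. For the bilinear and linear perturbation terms, the key observation is that Young's inequality yields $\|\eta_\e*u\|_{L^\I}\lec \e^{-3/2}\|u\|_{L^2_\uloc}\lec \e^{-3/2}N_q(u)$ (using $l^q\hookrightarrow l^\I$ and $\e\le1$). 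Combined with the Oseen-type bound $\|e^{(t-s)\De}\mathbb P\nb\cdot f\|_{L^2}\lec(t-s)^{-1/2}\|f\|_{L^2}$ and a Gaussian-localized analogue on each $B_1(k)$, Duhamel produces the \emph{asymmetric} estimates
\[
\esssup_t\|B_\e(u,w)(t)\|_{L^2}\le C\e^{-\frac32}T^{\frac12}N_q(u)\esssup_t\|w(t)\|_{L^2},\quad N_q(B_\e(u,w))\le C\e^{-\frac32}T^{\frac12}N_q(u)N_q(w),
\]
and analogous bounds for $L^\e$ with one $N_q$-factor replaced by $\|v\|_{L^\I L^3_\uloc}<\de$. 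Choosing $T_{\e,\de}=\min(T_0,\,c\e^3(\|u_0\|_{E^2_q}+\de)^{-2})$ for a small universal $c>0$ then makes the iteration a contraction on the set $\{\esssup\|\cdot\|_{L^2}\le 2C\|u_0\|_{L^2},\ N_q(\cdot)\le 2C\|u_0\|_{E^2_q}\}$, with both target bounds preserved. The fixed point $u_\e$ is the desired mild solution and satisfies \eqref{ineq.target}; uniqueness in the stated class is built into the contraction.

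Continuity $u_\e\in C([0,T_{\e,\de}];L^2)$ follows from continuity of each summand in the Duhamel formula. Smoothness follows by observing that $\eta_\e*u_\e$ and $\eta_\e*v$ are spatially $C^\I$ with locally bounded derivatives, so the PDE for $u_\e$ is a linear Stokes system with smooth coefficients, to which standard interior parabolic regularity applies; the pressure is recovered from $\nb p_\e=-(I-\mathbb P)\nb\cdot((\eta_\e*u_\e)\otimes u_\e+(\eta_\e*v)\otimes u_\e+u_\e\otimes(\eta_\e*v))$. Finally, the global $L^\I L^2\cap L^2\dot H^1$ energy identity for the smooth $u_\e$ gives $\int_0^{T_{\e,\de}}\int_{B_1(k)}|\nb u_\e|^2\,dx\,dt\in l^1(\Z^3)\subset l^{q/2}(\Z^3)$ (since $q\ge2$), upgrading the flat-norm control to full $\LE_q(0,T_{\e,\de})$-membership.

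The main obstacle is the joint control of $L^2$ and the $l^q$-based local $L^2$ norm under Duhamel, because the latter is not a classical Banach function space on $\R^3$. Two technical points make the argument go through: the Gaussian-localization reduction of the heat-semigroup estimate to discrete convolution against an exponentially decaying kernel on $\Z^3$, and the asymmetric bilinear estimate in which the mollifier $\eta_\e$ converts one $L^2$ factor into an $L^\I$ factor at cost $\e^{-3/2}$, so the smallness time depends on the smaller norm $\|u_0\|_{E^2_q}$ rather than on $\|u_0\|_{L^2}$. Once these are in place, the rest is a routine Kato--Fujita contraction.
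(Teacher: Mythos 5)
Your proposal is correct and, for the core fixed-point argument, follows essentially the same route as the paper: Picard iteration controlling simultaneously the flat norm $\norm{\cdot}_{\LE_q^\flat}$ (your $N_q$) and the $L^\infty L^2$ norm, with the mollifier absorbing one factor into $L^\infty$ at cost $C(\e)$ so that the existence time depends only on $\|u_0\|_{E^2_q}+\de$, localized semigroup estimates reduced to discrete convolution against an $l^1(\Z^3)$ kernel, pressure recovered through Riesz transforms of the $L^\infty L^2$ products, and smoothness from Stokes regularity with smooth coefficients. Two points of comparison. First, a small technical caveat: the localized estimate for $e^{(t-s)\De}\mathbb P\nb\cdot$ cannot have Gaussian tails, since the Leray projection makes the Oseen kernel decay only like $|x|^{-4}$ (Solonnikov's bound \eqref{ineq.oseen}); the paper therefore splits into a near part treated by the $L^2_\uloc$ estimate of \cite{MaTe} and a far part controlled by the discrete kernel $\widetilde K(k)\sim|k|^{-4}$, which is still in $l^1(\Z^3)$, so your claimed asymmetric bilinear bounds remain valid with this modification and your argument is unaffected. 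Second, your treatment of the gradient part of the $\LE_q$ norm genuinely differs from the paper's: you invoke the global energy identity (available because $u_0\in L^2$ and the mollified drifts are bounded, the same $L^2$ theory the paper cites from \cite[Ch 21]{LR}) to get $\nb u_\e\in L^2L^2$ and then use $l^1\subset l^{q/2}$ for $q\ge2$, whereas the paper proves \eqref{eq4.18} by a localized energy equality on each $B_1(k)$, estimating the far-field pressure through $\widetilde K$ and Young's convolution inequality. Your shortcut suffices for the lemma as stated, since only qualitative finiteness of the gradient sum is claimed and the quantitative bound \eqref{ineq.target} involves no gradients; what the paper's localized argument buys is control that does not pass through the global energy $\|u_0\|_{L^2}$, which is in the spirit of the later applications (where quantitative $\LE_q$ bounds are anyway re-derived from Lemma \ref{lem.A0qboundperturbed}), while your route is shorter and leans on the classical $L^2$ framework.
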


We do not assume $\e,\de \ll 1$ in Lemma \ref{lemma.existence.loc.reg}.
Note that $T_{\e,\de}$ depends on $\e$, $\de$ and $u_0$ (through $\norm{u_0}_{E^2_q}$). Also, we do not establish estimates on the gradients of the velocity yet.  This is the same mollified perturbed Navier-Stokes equations considered in \cite[Ch 21 \S3]{LR} (which contains an additional temporal mollification on $v$ that can be ignored)
but we are asserting additional properties, namely bounds in $E^2_q$. Due to this, we get for free properties like $u_\e \in C([0,T_{\e,\de}];L^2)$ from Lemari\'e-Rieusset's proof.

\begin{proof} Note that $u_0\in E^2_q$ since $L^2\subset E^2_q$.  
We establish estimates first for elements of the Picard scheme where
\[
u_\e^1 = e^{t\Delta }u_0,
\]
and, for $n>1$,
\EQ{
u_\e^n(x,t)&= e^{t\Delta }u_0(x) +\int_0^t e^{(t-s)\Delta}\mathbb P\nb\cdot ( (\eta_\e*u_\e^{n-1})\otimes u_\e^{n-1}    )\,ds
 + L_t(u_\e^{n-1}).
}

For the first iterate we have
\EQN{
\int_{B_1(k)} |e^{t\Delta} u_0 (x)|^2\,dx &\leq  \int_{B_1(k)} |e^{t\Delta} ( u_0  \chi_{B_4(k)}) (x)|^2\,dx 
\\&+ \int_{B_1(k)} \sum_{|k'-k|\geq 4}  \int_{B_1(k')} \bigg| \frac {|G(|x-y|^2/t)|} {t^{3/2}} u_0(y)\bigg|^2\,dy\,dx = I_1^0(k)+I_2^0(k),
}
where $G(u)=Ce^{-u/4}$.  By $L^p$ estimates for the heat kernel we have
\Eq{\label{ineq.picard.1.a}
\int_{B_1(k)} |e^{t\Delta} ( u_0  \chi_{B_4(k)}) (x)|^2\,dx \leq C \sum_{|k-k'|\leq 4} \int_{B_1(k')}|u_0(x)|^2\,dx.
}
For the second term, since $|G(u)|\leq C |u|^{-2}$,
\EQ{\label{ineq.picard.1.b}
I_2^0(k) &\leq C\int_{B_1(k)} t  \bigg[\sum_{|k'-k|\geq 4}  \int_{B_1(k')} \frac 1 {|x-y|^4} |u_0(y)|\,dy \bigg]^2\,dx
\\&\leq  C\int_{B_1(k)} t  \bigg[\sum_{|k'-k|\geq 4}  \frac 1 {|k'-k|^4}\bigg(\int_{B_1(k')} |u_0(y)|^2\,dy\bigg)^{1/2}\bigg]^{2}\,dx
\leq  C  ( \widetilde K * a) (k)^2,
}
where we have assumed $t\leq 1$ and set $a=\{  a_k\}_{k\in \Z^3}$ is the sequence with entries
\[
a_k =\bigg( \int_{B_1(k)} |u_0(y)|^2\,dy\bigg)^{1/2},
\]
and 
\[
\widetilde K_k = \begin{cases}
 |k|^{-4}  &\text{ if }|k|\geq 4
\\ 0 &\text{ otherwise}
\end{cases}   .
\]
All terms above are independent of $t$.  Therefore,
\EQ{
\bigg\| \bigg(  \sup_{0<t<1} \int_{B_1(k)} |u_\e^1(x,t)|^2\,dx \bigg)^{1/2}  \bigg\|_{l^{q}}
&\leq C \|u_0\|_{E^2_q} +C  \|    \widetilde K * a \|_{l^{q}}
\\&\leq C \|u_0\|_{E^2_q} +C  \|   a   \|_{l^{q}}  \| \widetilde K\|_{l^{1}}
=  C \|u_0\|_{E^2_q}.
}

For the higher Picard iterates we need to use standard estimates for the Oseen tensor $S$, the kernel of $e^{t\Delta}\mathbb P$ in $\R^3$, found by Oseen \cite{Oseen}. We have the following pointwise estimate for $S$ by Solonnikov \cite{Solonnikov},
\Eq{\label{ineq.oseen}
|\pd_t^m\nb_x^k S(x,t)|\le \frac{C_{k,m}}{(|x|+\sqrt t)^{3+k+2m}}.
}

We are now ready to estimate the $n$-th Picard iterate using the assumption that
\Eq{\label{eq4.9}
\bigg\|    \sup_{0<t<T_\e} \bigg(\int_{B_1(k)}|u_\e^{n-1} (x,t) |^2\,dx\bigg)^{1/2}		\bigg\|_{l^{q}} < 2C\|u_0\|_{E^2_q}.
}
We have 
\Eq{\label{eq4.10}
\int_{B_1(k)} |u_\e^n(x,t)|^2\,dx \leq  \int_{B_1(k)} |e^{t\Delta} u_0 (x)|^2\,dx + I(k)+J(k),
}
where
\EQN{
I(k)&= \int_{B_1(k)}\bigg| \int_0^t e^{(t-s)\De}\mathbb P \nb\cdot (\eta_\e*u_\e^{n-1}   \otimes u_\e^{n-1}) \,ds\bigg|^2\,dx,
\\
J(k)&=\int_{B_1(k)} \big|  L_t(u_\e^{n-1})\big|^2\,dx.
}
We've already estimated the first term on the right hand side of \eqref{eq4.10}.  For $I(k)$, using Cauchy-Schwarz inequality,
\EQN{
I(k)&\le \int_{B_1(k)}\bigg| \int_0^t e^{(t-s)\De}\mathbb P \nb\cdot (\eta_\e*u_\e^{n-1}   \otimes u_\e^{n-1}  \chi_{B_{4}(k)^c})  \,ds\bigg|^2\,dx\\
&\quad+ \int_{B_1(k)}\bigg| \int_0^t e^{(t-s)\De}\mathbb P \nb\cdot (\eta_\e*u_\e^{n-1}   \otimes u_\e^{n-1}  \chi_{B_{4}(k)})  \,ds\bigg|^2\,dx
\\
&\le C t \int_0^t  \int_{B_1(k)}\bigg| e^{(t-s)\De}\mathbb P \nb\cdot (\eta_\e*u_\e^{n-1}   \otimes u_\e^{n-1}\chi_{B_{4}(k)^c})\bigg|^2 \,dx \,ds  \\
&\quad + C \bigg\|  \int_0^t e^{(t-s)\De}\mathbb P \nb\cdot (\eta_\e*u_\e^{n-1}   \otimes u_\e^{n-1}  \chi_{B_{4}(k)})  \,ds \bigg\|_{L^2_\uloc}^2\\
&= : I_1(k)+ I_2(k).
}

Using the pointwise estimates \eqref{ineq.oseen} for the kernel of $e^{t\Delta}\mathbb P \nb\cdot$ we have
\begin{align}
I_1(k) &\leq  C t \int_0^t  \int_{B_1(k)} \bigg[ \sum_{|k-k'|>4}  \int_{B_1(k')} \frac 1 {|k-k'|^4} |\eta_\e*u_\e^{n-1}   \otimes u_\e^{n-1}| \,dy  \bigg]^2 \,dx \,ds \notag 
\\&\leq 
C(\e) t  \sup_{0<s<t}\|u_\e^{n-1}\|_{E^2_q}^2 \int_0^t  \int_{B_1(k)} \bigg[ \sum_{|k-k'|>4}  \frac 1 {|k-k'|^4}  \bigg(\sup_{0<s<t} \int_{B_1(k')}|  u_\e^{n-1}|^2 \,dy\bigg)^{\frac12}  \bigg]^2 \,dx \,ds \notag 
\\&\leq C(\e) t^2  \big( \sup_{0<s<t} \|u_\e^{n-1}\|_{E^2_q}^2  \big) (\widetilde K * a^{n-1}) (k)^2  , \label{ineq.i1}
\end{align}
where we have used the fact that 
that $\|\eta_\e * u_\e^{n-1}\|_{L^\infty}\leq C(\e) \|u_\e^{n-1}\|_{E^2_q}$ and are letting $a^{n-1}$ be the sequence with entries \[a_k^{n-1}=  \bigg( \sup_{0<s<t} \int_{B_1(k)}|u_\e^{n-1}(y)|^2\,dy \bigg)^{1/2}.\]

For $I_2(k)$, using the estimates \cite[(3.20) on p.~385]{MaTe} with $p=q=2$ and $t<1$,
\EQ{\label{ineq.i2}
I_2(k) &\leq \bigg( \int_0^t \frac C {(t-s)^{1/2}} \| \eta_\e*u_\e^{n-1}   \otimes u_\e^{n-1}  \chi_{B_{4}(k)} \|_{L^2_\uloc}   		\,ds		\bigg)^2
\\&\leq \bigg(C(\e)\int_0^t \frac 1 {(t-s)^{1/2}}  \sum_{|k-k'|<8} \|u_\e(s)\|_{L^2(B_1(k'))}^2\,ds  \bigg)^2
\\&\leq    C(\e) t  \sup_{0<s<t} \|u_\e^{n-1}\|_{E^2_q}^2   \sup_{0<s<t} \sum_{|k-k'|<8}  \|u_\e^{n-1}\|_{L^2(B_1(k'))}^2.
}

We now turn our attention to $J(k)$.  We use the same strategy as our estimate for the nonlinear term. In particular,
\[
J(k)=\int_{B_1(k)} \big|  L_t(u_\e^{n-1}) \big|^2\,dx \leq J_1(k)+J_2(k),
\]
where
\EQN{
J_1(k)&=C t \int_0^t  \int_{B_1(k)}\bigg| e^{(t-s)\De}\mathbb P \nb\cdot    (   ( u_\e^{n-1}   \otimes \eta_\e *v+\eta_\e* v\otimes u_\e^{n-1}   )\chi_{B_{4}(k)^c})\bigg|^2 \,dx \,ds,\\
J_2(k)&=C\bigg\|  \int_0^t e^{(t-s)\De}\mathbb P \nb\cdot (  (  u_\e^{n-1}   \otimes\bar  \eta_\e*v+\eta_\e* v\otimes u_\e^{n-1}   )\chi_{B_{4}(k)})  \,ds \bigg\|_{L^2_\uloc}^2.
}
 
To estimate $J_1(k)$, note that  its integrand is bounded by
\EQN{
&\bigg[ \sum_{|k-k'|>4}  \int_{B_1(k')} \frac 1 {|k-k'|^4} | ( u_\e^{n-1}   \otimes \bar  \eta_\e*v+\eta_\e* v\otimes u_\e^{n-1}   )| \,dy  \bigg]^2(s)
\\&\leq C \bigg[ \sum_{|k-k'|>4} \frac 1 {|k-k'|^4} \big( \| u_\e^{n-1} \|_{L^2(B_2(k') )} \|v\|_{L^\infty(0,T_0;L^3_\uloc)}   \big)    \bigg]^2(s)
\\&\leq C \delta^2  \big(   \widetilde K * a^{n-1} \big)^2(k),
}
implying
\EQ{\label{ineq.j1}
J_1(k)\leq C \delta^2 t^2     \big(\widetilde K * a^{n-1} \big)^2(k) .
}
On the other hand, 
\EQ{\label{ineq.j2}
J_2(k)&\leq \bigg( \int_0^t \frac C {(t-s)^{1/2}} \| (  u_\e^{n-1}   \otimes \eta_\e* v+\eta_\e* v\otimes u_\e^{n-1}   )\chi_{B_{4}(k)} \|_{L^2_\uloc}   		\,ds		\bigg)^2
\\&\leq \bigg(   \int_0^t \frac C {(t-s)^{1/2}} \|\eta_\e*v \|_{L^\infty (0,T_0; L^\infty  (B_4(k)))} \|u_\e^{n-1}\|_{L^2(B_4(k))}  \,ds\bigg)^2
\\&\leq \bigg(   \int_0^t \frac {C}{(t-s)^{1/2}} C(\e)  \| v \|_{L^\I(0,T_0;L^3_\uloc)} \|u_\e^{n-1}\|_{L^2(B_4(k))} \,ds
 \bigg)^2
\\&\leq   C(\e) t \delta^2   \| u_\e^{n-1} \|_{L^2 (B_4(k))}^2.
}

Combining \eqref{ineq.picard.1.a}, \eqref{ineq.picard.1.b}, \eqref{ineq.i1}, \eqref{ineq.i2}, \eqref{ineq.j1} and \eqref{ineq.j2}, we have
\EQ{\label{ineq.combined}
\int_{B_1(k)} |u_\e^n(x,t)|^2\,dx &\leq  C \sum_{|k-k'|\leq 4} \int_{B_1(k')}|u_0(x)|^2\,dx+ C   (\widetilde K * a)(k)^2
\\&+C(\e) t^2  \big( \sup_{0<s<t}  \|u_\e^{n-1}(s)\|_{E^2_q}^2 \big) ( \widetilde K * a^{n-1}) ^2( k)
\\&+ C(\e) t  \sup_{0<s<t} \|u_\e^{n-1}(s)\|_{E^2_q}^2   \sup_{0<s<t} \sum_{|k-k'|<8}  \|u_\e^{n-1}\|_{L^2(B_1(k'))}^2
\\&+ C\de^2 t^2     \big(\widetilde K * a^{n-1} \big)^2(k)+ C(\e)\de^2 t    \| u_\e^{n-1} \|_{L^2 (B_4(k))}^2.
}
Note that  
\[
\sup_{0<s<t}\|f(s)\|_{E^2_q} \leq \bigg\|  \sup_{0<s<t} \bigg(\int_{B_1(k)} |f(x,s)|^2\,dx \bigg)^{1/2}  \bigg\|_{l^q}.
\]
Taking the supremum in time of the left hand side of \eqref{ineq.combined}, applying the $l^{q/2}$ norm, using Young's convolution inequality, and raising everything to the $1/2$ power yields 
\EQ{\label{eq4.16}
\norm{u_\e^n}_{\LE_q^\flat(0,t)} 
&\leq C\|u_0\|_{E^2_q} +C(\e) t^{1/2} \norm{u_\e^{n-1}}_{\LE_q^\flat(0,t)} ^2
\\&\quad +C(\e) \de t^{1/2}  \norm{u_\e^{n-1}}_{\LE_q^\flat(0,t)}.
}
Recall \eqref{LEqflat.def} for the $\LE_q^\flat$-norm.
So, if $t$ is small as determined by $C(\e)$, $\delta$ and $\|u_0\|_{E^2_q}$
  (but independently of $n$),
\[
 t \le \frac{C(\e)} {\|u_0\|_{E^2_q}^2+ \de^2},
\] then the right hand side of \eqref{eq4.16} is controlled by $2C\|u_0\|_{E^2_q}$.
This establishes a uniform in $n$ bound for $u_\e^{n}$.

Proceeding in the standard way,  these uniform bounds and the estimation methods above allow us to show the difference estimate
\Eqn{
 \norm{u_\e^{n+1}-u_\e^{n}}_{\LE_q^\flat(0,t)}
\le C(\e) \sqrt t (\norm{u_0}_{E^2_q} + \de) 
\norm{u_\e^{n}-u_\e^{n-1}}_{\LE_q^\flat(0,t)}.
}
Thus, if $t$ is sufficiently small, $u_\e^{n}$ is a Cauchy sequence in the above norm and converges to a limit $u_\e$ in the sense that 
\[
\norm{u_\e^{n}-u_\e}_{\LE_q^\flat(0,t)}
\to 0, \quad
\text{as } n \to \I.
\]
This convergence implies $u^\e$  satisfies \eqref{eq.mildregularized}
and \eqref{ineq.target}.

Uniqueness in the class \eqref{ineq.target} follows from the same difference estimates: If $u_1$ and $u_2$ are two mild solutions of \eqref{eq.mildregularized}
satisfying \eqref{ineq.target}, then
\Eqn{
 \norm{u_1-u_2}_{\LE_q^\flat(0,t)}
\le C(\e) \sqrt t (\norm{u_0}_{E^2_q} + \de) 
\norm{u_1-u_2}_{\LE_q^\flat(0,t)}.
}
Thus $u_1=u_2$ if $t$ is sufficiently small.

We now recover a pressure $p_\e$ associated to $u_\e$. It is known (see e.g.~\cite[Ch 21]{LR}) that $u_\e\in L^\I(0,T_{\e,\de};L^2 )$ for some $T_{\e,\de}'>0$ and its size is bounded by $2C \|u_0\|_{L^2}$ (this may be smaller than $T_{\e,\de}$ in which case we redefine $T_{\e,\de}$ to be the smaller of these values).  Hence, 
\[
\eta_\e * u_\e \otimes u+\eta_\e *v \otimes u_\e +u_\e \otimes \eta_\e * v\in L^\I(0,T_{\e,\de};L^2 ),
\]
and, therefore, $p_\e = (-\Delta)^{-1}\partial_i\partial_j  (\eta_\e * u_\e \otimes u+\eta_\e *v \otimes u_\e +u_\e \otimes \eta_\e * v)$ is meaningful.  It follows that $u_\e-e^{t\Delta}u_0$ solves the Stokes system with pressure $p_\e$ and source term $\nb \cdot (\eta_\e * u_\e \otimes u+\eta_\e *v \otimes u_\e +u_\e \otimes \eta_\e * v)$. Adding $e^{t\Delta}u_0$ shows that $u_\e$ and $p_\e$ solve the perturbed, regularized Navier-Stokes equations. Note that the local pressure expansion \eqref{pressure.dec} follows from the definition of $p_\e$.

We finally show that
\Eq{\label{eq4.18}
\bigg\| \int_0^t\int_{B_1(k)} |\nb u_\e|^2\,dx\,ds \bigg\|_{l^{q/2}(k)} <\infty,
}  
as this is needed to have $u_\e\in \LE_q(0,T_{\e,\de})$.
The following local energy equality holds for $u^\e$ and $p^\e$ due to smoothness and convergence to the data in $L^2_\loc$ for $t\leq T_{\e,\delta}$:
\EQ{
&\int_{B_1(k)} |u_\e|^2(x,t) \phi(x-k)\,dx + 2\int_0^t\int |\nb u_\e|^2 \phi(x-k)\,dx\,ds  
\\&= \int |u_0|^2\phi(x-k)  \,dx+ \int_0^t\int |u_\e|^2 \Delta\phi(x-k)  \,dx\,ds 
\\&\quad+\int_0^t\int  |u_\e|^2 ( \eta_\e*u_\e + \eta_\e *v)\cdot \nb \phi(x-k)\,dx\,ds
\\&\quad-  2 \int_0^t \int (u_\e \cdot \nb  (\eta_\e  *v ) )\cdot u_\e \phi(x-k)\,dx\,ds
+\int_0^t\int2 p_\e (u_\e\cdot \nabla\phi(x-k) )\,dx\,ds.
}
Because $\|\eta_\e * u\|_{L^\infty(B_2(k))}\leq C(\e) \|u_\e\|_{L^2_\uloc}$ provided $\e\le1$, we have
\Eq{\label{ineq.locbound1}
\int_0^t\int  |u_\e|^2( \eta_\e*u_\e)\cdot \nb \phi(x-k)\,dx\,ds\leq C(\e) \|u_\e\|_{L^\infty L^2_\uloc} \esssup_{0<s<t}\sum_{k'\sim k} \int_{B_1(k')} |u_\e|^2\,dx.
}
Using our assumptions on $v$  we have 
\[
 \|  \eta_\e * v\|_{L^\infty}+
\| \nb (\eta_\e * v)\|_{L^\infty}\leq C(\e) \|v\|_{L^\infty(0,T_0; L^{3}_\uloc)}\le C(\e)\de. 
\]
Hence 
\Eq{\label{ineq.locbound2}
\int_0^t\int  |u_\e|^2  (\eta_\e *v)\cdot \nb \phi(x-k))\,dx\,ds 
\leq  C(\e) \delta \esssup_{0<s<t} \sum_{k'\sim k}  \int_{B_1(k')} |u_\e(x,s)|^2\,dx,
}
and
\Eq{\label{ineq.locbound3}
\int_0^t  \int ((u_\e \cdot \nb) \eta_\e * v)\cdot (u_\e \phi(x-k)) \,dx\,ds \leq 
C(\e) \delta \esssup_{0<s<t}\sum_{k'\sim k} \int_{B_1(k')} |u_\e|^2\,dx.
}

The pressure satisfies the local pressure expansion \eqref{pressure.dec} and, therefore, by introducing a constant we may write it as a sum of a Calderon-Zygmund operator applied to a localized term and a non-singular integral operator applied to a far-field term, that is $p_\e(x,t) +c = p_{\e,\near }+p_{\e,\far}$. Given the structure of the pressure integral in the local energy inequality, the constant plays no role.  After applying the Calderon-Zygmund inequality, the term involving $p_{\e,\near }$ can be bounded exactly as the nonlinear and perturbation terms above, namely this term in the local energy inequality is dominated by the sum of the right hand sides of \eqref{ineq.locbound1}-\eqref{ineq.locbound3}.
We therefore only have to estimate the far field part of the pressure.  As usual, in $B_2(k)\times (0,T_0)$,
\EQN{
|p_{\e,\far}|&\leq C\sum_{k'\in \Z^3; |k'-k|>4} \frac 1 {|k-k'|^4} \int_{B_2(k')} \big(|u_\e| |\eta_\e * u_\e|  + |\eta_\e * v| |u_\e|         \big)\,dy
\\&\leq C(\e)   \sum_{k'\in \Z^3; |k'-k|>4} \frac 1 {|k-k'|^4} \bigg(\|u_\e\|_{L^2(B_3(k'))}^2   + \|u_\e\|_{L^2(B_3(k'))}\|v\|_{L^\infty(0,T_0;L^3_\uloc) }     \bigg),
}
Therefore,
\EQ{\label{eq4.23}
&\int_0^t \int_{B_2(k)}2  {p_{\e,\far}} (u_\e\cdot \nabla\phi(x-k) )\,dx\,ds
\\&\leq    C(\e) T_0 \|u_\e\|_{L^\infty L^2_\uloc} \esssup_{0<s<t}\sum_{k'\in \Z^3; |k'-k|>4} \frac 1 {|k-k'|^4}   \int_{B_3(k')} |u_\e|^2  \,dy  
\\
&+  C(\e)\delta T_0 \|u_\e\|_{L^\infty L^2(B_2(k))} \esssup_{0<s<t}\sum_{k'\in \Z^3; |k'-k|>4} \frac 1 {|k-k'|^4}    \|u_\e\|_{L^2(B_3(k'))}	
}

Taking the essential supremum in $t$, raising the above to the power $q/2$, summing over $k$ and using H\"older's and  Young's inequalities on the far field pressure term shows \eqref{eq4.18}. Indeed, if we denote $\alpha _k = \|u_\e\|_{L^\infty L^2(B_2(k))} $ with $\alpha =(\alpha _k) \in l^q$, then  terms from \eqref{eq4.23} are bounded by
\[
\norm{\widetilde K * (\alpha ^2)}_{l^{q/2}} + \norm{\alpha _k (\widetilde K * \alpha )(k)}_{l^{q/2}}\lec 
\norm{\alpha ^2}_{l^{q/2}} +  \norm{\alpha }_{l^{q}} \norm{\widetilde K * \alpha}_{l^{q}}\lec \norm{\alpha }_{l^{q}}^2.
\]
This finishes the proof of Lemma \ref{lemma.existence.loc.reg}.
\end{proof}

\subsection{A priori bound for the perturbed problem}

A local energy solution to the perturbed Navier-Stokes equations \eqref{eq4.1} is a weak solution $u$ to \eqref{eq4.1} satisfying Definition \ref{def:localLeray} with the obvious modifications, namely $u$ and $p$ satisfy the perturbed system as distributions and also satisfy the perturbed local energy inequality. Lemma \ref{lem.A0qbound} can be extended to hold for local energy solutions to the perturbed Navier-Stokes equations provided the perturbation is chosen appropriately.

\begin{lemma}\label{lem.A0qboundperturbed}
Assume $u_0\in E^2_q$ for some $2\leq q<\infty$ is divergence free.  There exists a small universal constant $c_0$ so that, if $\delta \in (0,c_0]$ and $v:\R^3\times [0,T_0]\to \R^3$ satisfies
$\div v=0$ and
\[
\esssup_{0<t\leq T_0}\| v(t) \|_{L^3_\uloc} <\delta,
\]
for some $T_0>0$ and, additionally,  a given local energy solution $u$ to the perturbed Navier-Stokes equations \eqref{eq4.1} satisfies,
 \begin{equation}
 \bigg\|\esssup_{0\leq t \leq T_0} \int_{B_1(x_0) }|u|^2  \,dx+ \int_0^{T_0}\int_{B_1(x_0) } |\nabla u|^2\,dx\,dt \bigg\| _{l^{q/2}(x_0\in \Z^3)}  <\infty,  
\end{equation}
then there are positive universal constants $C_1$ and $\lambda_0<1$ such that
\begin{equation}
\bigg\|\esssup_{0\leq t \leq \lambda } \int_{B_1(x_0) }\frac {|u|^2}2  \,dx+ \int_0^{\lambda }\int_{B_1(x_0) } |\nabla u|^2\,dx\,dt \bigg\| _{l^{q/2}(x_0\in \Z^3)}
\leq  C_1 A_{0,q},
\end{equation}
where 
\[
A_{0,q}  =  N^0_{q} =  \bigg\|  \int_{B_1(x_0)  }  |u_0(x)|^2 \,dx   \bigg\|_{l^{q/2}(x_0\in \Z^3)} ,
\quad   \lambda=\min (T_0,\lambda_0, \frac{\lambda_0 }{A_{0,q}^{2}}). 
\]
Consequently, 
\begin{equation}\label{th2.2-2B}
\bigg\|\int_0^{\lambda }\!\!\int_{B_1(x_0) } 
|u|^{\frac {10}3} +|p-c_{x_0}(t)|^{\frac53}\,dx\,dt \bigg\|_{l^{\frac {3q}{10}}(x_0 \in \Z^3)}
\le  C A_{0,q}^{\frac 53}.
\end{equation}
\end{lemma}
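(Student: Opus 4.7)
The plan is to mimic the proof of Lemma \ref{lem.A0qbound} at scale $R=1$, using the local energy inequality for the perturbed system \eqref{eq4.1} in place of the standard one, and showing that the two additional ``perturbation'' terms that appear can be absorbed by choosing $\delta$ small and $\lambda$ small. Specifically, testing \eqref{eq4.1} against $u\,\phi(x-k)$ with $\phi$ as in the original proof produces, in addition to the terms already appearing in \eqref{ineq.lei.zero}, the two contributions
\[
\mathrm{I}(k)=\int_0^t\!\!\int |u|^2 \, v\cdot\nabla\phi(x-k)\,dx\,ds,\qquad
\mathrm{II}(k)=-2\int_0^t\!\!\int (u\cdot\nabla v)\cdot u \,\phi(x-k)\,dx\,ds.
\]
Using $\div v=0$, an integration by parts rewrites $\mathrm{II}(k)$ as a sum of the type $\int (u\cdot v)(u\cdot\nabla\phi) + \int ((u\cdot\nabla)u)\cdot v\,\phi$, so the only derivative that falls on $v$ is removed.

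Next I will bound $\mathrm{I}(k)$ and $\mathrm{II}(k)$ term by term, over the ball $B_2(k)$ where $\phi$ is supported. For $\mathrm{I}(k)$ and for the first piece of $\mathrm{II}(k)$, H\"older's inequality in space gives a factor $\|v(s)\|_{L^3(B_2(k))}\le \delta$ times $\|u(s)\|_{L^3(B_2(k))}^2$; using H\"older in time together with the cubic estimate \eqref{u3.est} (at $R=1$) yields
\[
\int_0^\lambda \|u\|_{L^3(B_2(k))}^2\,ds\;\lesssim\;\lambda^{1/3}\Bigl(\int_0^\lambda \|u\|_{L^3(B_2(k))}^3\,ds\Bigr)^{2/3}\lesssim\; \lambda^{1/2}\,e_\lambda(k),
\]
where $e_\lambda(k)$ is the local energy as in the original proof. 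For the remaining piece of $\mathrm{II}(k)$, H\"older plus the local Gagliardo--Nirenberg inequality $\|u\|_{L^6(B_2(k))}\lesssim \|\nabla u\|_{L^2(B_2(k))}+\|u\|_{L^2(B_2(k))}$ gives a bound of the form $C\delta\,e_\lambda(k)$. Altogether, the new contributions satisfy
\[
|\mathrm{I}(k)|+|\mathrm{II}(k)|\;\le\; C\delta\,(1+\lambda^{1/2})\,e_\lambda(k)
\;\le\; \tfrac{1}{8}\,e_\lambda(k)
\]
provided $\delta\le c_0$ and $\lambda\le \lambda_0$ for universal small constants.

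At this point the analogue of \eqref{eq3.11} holds with an additional summand of the form $\tfrac18 e_\lambda(k)$ on the right, which is absorbed on the left after summation. Raising to the $q/2$ power, summing over $k\in\Z^3$, and applying Young's convolution inequality to the far-field pressure contribution exactly as in the original proof yields the analogue of \eqref{Eq.est}, now of the form $E\le C_2^q A_{0,q}^{q/2}+C_2^q\lambda^{q/2}E+C_2^q\lambda^{q/4}E^{3/2}$; choosing $\lambda=\min(T_0,\lambda_0,\lambda_0/A_{0,q}^2)$ and using continuity of $E_{1,q,\lambda}$ in $\lambda$ (proved exactly as in Lemma \ref{lem.A0qbound}) closes the bootstrap and gives the stated bound. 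Finally, \eqref{th2.2-2B} is deduced from the energy bound in the same manner as \eqref{th2.2-2}, noting that the pressure formula now also contains the terms $-\Delta^{-1}\div\div(v\otimes u+u\otimes v)$, whose near-field Calder\'on--Zygmund part is controlled by $\|v\|_{L^3_\uloc}\|u\|_{L^{10/3}}\le \delta\|u\|_{L^{10/3}}$ (H\"older) and whose far-field part enjoys the same kernel decay and hence the same $l^1$ convolution estimate used for $p_2$. The main obstacle is the bookkeeping for $\mathrm{II}(k)$: since $v$ is not assumed differentiable, the integration-by-parts step and the $L^3$--$L^6$--$L^2$ split above are essential, and one must check that the resulting factor of $\delta$ indeed appears as a prefactor (not just a multiplicative constant) so that smallness of $\delta$ rather than smallness of $\lambda$ is what absorbs the term.
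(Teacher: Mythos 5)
Your proposal is correct and follows essentially the same route as the paper's proof: bound the two new perturbation terms in the local energy inequality by $C\delta$ times (neighboring) local energies via H\"older with $\|v\|_{L^3_\uloc}$ and Gagliardo--Nirenberg, treat the new near- and far-field pressure cross terms analogously to $p_1$ and $p_2$, absorb the $\delta$-prefactored terms after taking the $l^{q/2}$ norm, and rerun the argument of Lemma \ref{lem.A0qbound} at $R=1$. Two trivial slips that do not affect the argument: the integration by parts in $\mathrm{II}(k)$ uses $\div u=0$ rather than $\div v=0$, and the cubic term in the final inequality carries $\lambda^{q/8}$ rather than $\lambda^{q/4}$.
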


It will be clear from the proof that this lemma also holds if the perturbed Navier-Stokes equations are replaced by the \emph{regularized} perturbed Navier-Stokes equations, that is, it remains valid for the solutions described in Lemma \ref{lemma.existence.loc.reg}.

\begin{proof} 

Once we establish estimates for the perturbation terms in the local energy inequality for $u$, the proof of this is identical to the proof of Lemma \ref{lem.A0qbound} with $R=1$ and $\la_R=\la$.

Let
\[
e_\la(\kappa)=\esssup_{0\leq t \leq \la} \int_{B_1(\kappa) }|u(x,t)|^2  \,dx+ \int_0^{\la}\int_{B_1(\kappa) } |\nabla u(x,t)|^2\,dx\,dt.
\]

The relevant estimates for the linear terms from the perturbed local energy inequality are bounded as
\EQ{
&\int_0^\lambda \int ( v \cdot \nb u+u\cdot \nb v ) \cdot (\phi(x-\kappa) u)\,dx\,dt 
\\&=\int_0^\lambda \int  (v \cdot \nb u) \cdot (\phi(x-\kappa) u) - u\otimes v:\nb(\phi(x-\kappa) u) \,dx\,dt 
\\&\le C \int_0^\lambda \int_{B_2(\ka)} |v|(|u|^2+|u||\nb u|) \,dx\,dt 
\\&\le C \|v\|_{L^\I L^3_\uloc}\int_0^\la   \| u\|_{L^6(B_2(\kappa))}\bke{\|  u\|_{L^2(B_2(\kappa))} +\| \nb u\|_{L^2(B_2(\kappa))} } \,dt
\\&\leq C \lambda \delta \esssup_{0<t<\lambda} \sum_{\kappa'\sim \kappa}  \int_{B_1(\kappa')} |u(x,t)|^2\,dx +C \delta \sum_{\kappa'\sim \kappa} \int_0^\lambda \int_{B_1(\kappa')} |\nb u(x,t)|^2 \,dx\,dt.
}

The pressure can be split into local and far-field parts.  The new terms of the local part are treated identically to the preceding estimates after applying the Calderon-Zygmund inequality.  The far field part of the pressure can be written as $p_{\far}=p_{\far,u}+p_{\far,v}$ where $p_{\far,u}$ is the far-field part appearing in the proof of Lemma \ref{lem.A0qbound} and  $p_{\far,v}$ is the remaining part.  The estimate for $p_{\far,u}$ is given in the proof of Lemma \ref{lem.A0qbound} and so we only need to worry about $p_{\far,v}$, which is bounded 
in $B_2(\ka)\times(0,T)$ as
\begin{align*}
|p_{\far,v}(x,t)|&\leq C \int \frac 1 {|\kappa-y|^4} |u(y,t)||v(y,t)|  (1-\chi_{4}(y-\ka))\,dy
\\&\leq C\delta  \widetilde K * e_{\la}^{1/2} {(\ka)}.
\end{align*}
This leads to 
\begin{align*}
&\int_0^{{\la  }} \int 2p_{\far,v}(x,s)  u(x,s)\cdot \nabla \phi (x-\ka) \,dx\,ds 
\\&\leq C \int_0^\la \int_{B_2(\kappa)} \de^{1/2} (\widetilde K * e_{\la}^{1/2} )\de^{1/2} |u|\,dx\,ds
\\&\leq C \delta \int_0^\la \int_{B_{2}(\ka)} (\widetilde K * e_{\la}^{1/2} ) ^{2}\,dx\,ds +   \delta \int_0^{\la }\int_{B_{2}(\ka)}|u|^2\,dx\,ds
\\&\leq C \delta  \la  ( (\widetilde K*e_{\la}^{1/2} )(\ka))^{2} +\delta \la \esssup_{0<t<\lambda} \sum_{|\kappa-\kappa'|<4}  \int_{B_1(\kappa')} |u|^2\,dx.
\end{align*} 

Combining the above estimates and using the proof of Lemma \ref{lem.A0qbound} {cf.~\eqref{eq3.11}}, we obtain
\EQ{
e_\la(\kappa)
&\leq  \int |u_0|^2\phi(x-\ka)  \,dx+C \la  \sum_{\ka'\in \Z^3; |\ka'-\ka|\leq 2} e_{\la}(\ka') 
\\&+ C      {\la^{1/4}}   \sum_{\ka'\in \Z^3; |\ka'-\ka|\leq 10} (e_{\la}(\ka'))^{3/2}
+C   {\la^{1/4}}   ( (\widetilde K*e_{\la} )(\ka ))^{3/2}  
\\&+C \delta \sum_{|\kappa-\kappa'|<10} {e_\la(\ka')}  
+ C \delta  \la^{1/4}  ( (\widetilde K*e_{\la}^{1/2} )(\ka))^{2},
}
where we are using {$\la\leq \la_0\le 1$.}
For reference, the first two lines above are identical to the estimates in the proof of Lemma \ref{lem.A0qbound}, while {terms in the last line}---those with a factor of $\de$---are new.  We therefore only discuss the remaining terms here. Applying the $l^{q/2}$ norm to $(\widetilde K * e_\la^{1/2})^{2}$ gives 
\EQ{
\|(\widetilde K*e_{\la}^{1/2} )(\ka))^{2}\|_{l^{q/2}}
&= \| (\widetilde K*e_{\la}^{1/2} )(\ka)\|_{l^{q}}^{2}
\leq  C \|\widetilde K\|_{l^1} \| e_\la \|_{l^q/2}.
}
We now choose $c_0$---which dominates $\de$---to be small enough that, when we apply the $l^{q/2}$ norm to the right hand side, the terms with a factor of $\delta$  {(finite by assumption)} will be absorbed into the $l^{q/2}$ norm of the left hand side.  The remaining terms on the right hand side are identical to those in the proof of Lemma \ref{lem.A0qbound} and the remainder of the proof is the same.
\end{proof}

\subsection{Extension for the regularized, perturbed problem}

\begin{lemma}\label{lemma.existenceRegProblem}Let $\e,\delta>0$ be given and assume $\de\le c_0$ where $c_0$ is defined in Lemma \ref{lem.A0qboundperturbed}.
Assume $u_0\in {L^2}$, is divergence free and $v:\R^3\times [0,T_0]\to \R^3$ satisfies 
$\div v=0$ and
\[
\esssup_{0<t\leq T_0}\| v(t) \|_{L^3_\uloc} <\delta.
\]
Then, there exists $T\in (0,T_0]$ and a weak solution $u_\e$ and pressure $p_\e$ to  
\Eq{
\partial_t u_\e -\Delta u_\e +\eta_\e* u_\e \cdot\nb u_\e + \eta_\e * v \cdot \nb u_\e + u_\e \cdot \nb (\eta_\e *v) +\nb p_\e = 0;\quad \nb\cdot u_\e=0,
}
on $\R^3\times [0,T]$.
Furthermore, we have $u_\e\in L^\infty (0,T ; E^2_q)$  and  satisfies 
\EQ{\label{ineq.target2}
\bigg\|\esssup_{0\leq t \leq T_0 } \int_{B_1(x_0) }\frac {|u_\e|^2}2  \,dx+ \int_0^{T_0 }\int_{B_1(x_0) } |\nabla u_\e|^2\,dx\,dt \bigg\| _{l^{q/2}(x_0\in \Z^3)}^2\leq 2C \|u_0\|_{E^2_q},
}
for a constant $C$ not depending on $\e$, $\delta$, $v$ or $u_0$. Here, $T= \min (T_0,\la_0,\la_0A_{0,q}^{-2})$ depends on $\|u_0\|_{E^2_q}$ but not on $\|u_0\|_{L^2}, u_\e, \e, \de$ or $v$.
\end{lemma}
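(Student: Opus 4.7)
The plan is to combine the local existence from Lemma \ref{lemma.existence.loc.reg} with the uniform a priori bound of Lemma \ref{lem.A0qboundperturbed} (which, by its concluding remark, also applies to the regularized system) in a continuation argument reaching the target time $T:=\min(T_0,\la_0,\la_0 A_{0,q}^{-2})$. Since $q\ge 2$ yields $L^2\subset E^2_q$ with $\|u_0\|_{E^2_q}\le C\|u_0\|_{L^2}$, Lemma \ref{lemma.existence.loc.reg} produces a smooth mild solution $u_\e$ and pressure $p_\e$ on $[0,T_{\e,\de}]$, with $T_{\e,\de}=\min(T_0,C(\e)(\|u_0\|_{E^2_q}+\de)^{-2})$ and $u_\e\in C([0,T_{\e,\de}];L^2)$. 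By smoothness and the divergence-free nature of $\eta_\e\ast u_\e$ and $\eta_\e\ast v$, this $u_\e$ satisfies the regularized perturbed local energy identity, hence is a local energy solution of the regularized perturbed system on $[0,T_{\e,\de}]$.

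The key observation is that whenever $u_\e$ is already known to exist as a smooth solution on a subinterval $[0,t^*]\subseteq[0,T]$ with $\|u_\e\|_{\LE_q(0,t^*)}<\infty$, Lemma \ref{lem.A0qboundperturbed} gives
\[
\|u_\e\|_{\LE_q(0,t^*)}^2 \le C_1 A_{0,q},
\]
with $C_1$ independent of $\e,\de,v$; in particular $\|u_\e(t^*)\|_{E^2_q}\le C_2\|u_0\|_{E^2_q}$. Thanks to this uniform $E^2_q$ bound, restarting Lemma \ref{lemma.existence.loc.reg} at time $t^*$ with datum $u_\e(t^*)\in L^2$ and shifted perturbation $v(\cdot+t^*)$ extends $u_\e$ on an additional interval of length at least
\[
\tau:=C(\e)(C_2\|u_0\|_{E^2_q}+\de)^{-2}>0,
\]
which is a positive constant independent of the current position $t^*$. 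Hence at most $\lceil T/\tau\rceil$ restarts suffice to cover $[0,T]$, and a final application of Lemma \ref{lem.A0qboundperturbed} on $[0,T]$ yields \eqref{ineq.target2}. Note that, naively iterating Lemma \ref{lemma.existence.loc.reg} alone would fail, because the $L^2$ norm (and potentially the $E^2_q$ norm) could double at each step and shrink the successive lifespans to a geometric series of finite total length; invoking the a priori bound between steps is exactly what prevents this collapse.

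The main subtlety is therefore the bookkeeping: one must verify at each step that the concatenation of local solutions is a bona fide weak solution of the regularized perturbed system on the enlarged interval, so that Lemma \ref{lem.A0qboundperturbed} may be reinvoked. This is standard given (i) the $L^2$-continuity of each local piece at its initial time, which guarantees $L^2$-continuity of the concatenation across transition times, and (ii) the interior smoothness of each piece (from the interior regularity of the Stokes system with smooth coefficients), so that the regularized perturbed equation and the associated local energy identity hold on each open subinterval and extend across transition times by $L^2$-continuity and dominated convergence. The assertion $u_\e\in L^\infty(0,T;E^2_q)$ is then immediate from $u_\e\in\LE_q(0,T)$.
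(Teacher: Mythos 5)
Your proposal is correct and follows essentially the same route as the paper: a continuation argument in which Lemma \ref{lemma.existence.loc.reg} supplies the local pieces and Lemma \ref{lem.A0qboundperturbed} (via its remark about the regularized system) supplies the uniform bound that keeps the restart time-step from degenerating; the paper likewise restarts near the end of each existence interval and iterates up to $T$.

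The one substantive difference is how the uniform step length is secured. You size the restart step purely by the $E^2_q$ norm, $\tau = C(\e)(C_2\|u_0\|_{E^2_q}+\de)^{-2}$, reading the lifespan off the statement of Lemma \ref{lemma.existence.loc.reg}. The paper instead first derives, from the classical energy estimate for the mollified perturbed system (the mollified convection term is antisymmetric), a bound $\sup_{0\le t\le T_0}\|u_\e(t)\|_{L^2}\le M_1$ uniform over the whole interval, and then takes the step length corresponding to $L^2$ data of size $M_1$. This extra step is not cosmetic: inside the proof of Lemma \ref{lemma.existence.loc.reg} the time $T_{\e,\de}$ is possibly shortened so that the $L^2$ part of \eqref{ineq.target} and the recovery of the pressure (which needs $u_\e\in L^\infty L^2$) hold, and that shortening is governed by the $L^2$ size of the data. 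Without a uniform $L^2$ bound across restarts, your claim that each restart lives for at least $\tau$ is only as strong as the literal statement of Lemma \ref{lemma.existence.loc.reg}, and the $L^2$ norm of the restart data is exactly the quantity you concede may grow. Adding the one-line observation that the perturbed energy inequality bounds $\|u_\e(t)\|_{L^2}$ by a fixed $M_1$ on all of $[0,T_0]$ closes this. A second, minor difference: the paper restarts at an interior time $t_*\in[T_{\e,\de}/2,3T_{\e,\de}/4]$ and uses the uniqueness clause of Lemma \ref{lemma.existence.loc.reg} on the overlap, which avoids having to verify that an endpoint concatenation is a weak solution across the junction; your concatenation argument works but requires the bookkeeping you describe.
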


For our application, it will be crucial that $T$ does not depend on $\|u_0\|_{L^2}$. This is consistent with the existence theory for the perturbed Navier-Stokes equations in \cite[Ch.~21]{LR} where the time-scale of existence is just the time-interval on which the perturbation factor is defined and does not depend on the {$L^2$}-size of the initial data. 

\begin{proof}
Assume that $u_\e$ and $p_\e$ are a smooth solution on $\R^3\times [0,T_0]$ with $L^2$ data. By the usual energy estimate for the perturbed equation \cite[p.~217]{LR}, it follows that $\|u_\e(t)\|_{L^2}$ is uniformly bounded on $[0,T_0]$ by some finite value $M_1$. Similarly,
 {if $u_\e\in \LE_q(0,T_0)$,} the estimates from Lemma \ref{lem.A0qboundperturbed} extend up to the time $T=\min(T_0, \la_0, \la_0 A_{0,q}^{-2})$. So, there exists $M_2$ with $\|u_\e\|_{\LE_q(0,T)}<M_2$.

Let $T_{\e,\de}$ be the timescale given in Lemma \ref{lemma.existence.loc.reg} for initial data of size $M_1$ in $L^2$ (note $T_{\e,\de}$ is independent of $M_2$). 
Then, the solution $u_\e$ from Lemma \ref{lemma.existence.loc.reg} exists on $\R^3\times [0,T_{\e,\de}]$ and belongs to $\LE_q(0,T_{\e,\de})$.
As noted earlier, Lemma \ref{lem.A0qboundperturbed} also applies to the regularized problem. We therefore conclude $\|u_\e\|_{\LE_q(0,T_{\e,\de})}<M_2$ and hence
\[
\esssup_{0<t\leq T_{\e,\delta}} \|u(t)\|_{E^2_q} 
\leq M_2.
\]
As we mentioned above, we also know 
\[
\esssup_{0<t\leq T_{\e,\delta}} \|u(t)\|_{L^2} 
\leq M_1.
\]
Hence, we can re-solve the problem at some $t_*\in [T_{\e,\delta}/2, 3T_{\e,\de}/4]$ to obtain a solution that lives on $[t_*, t_*+T_{\e,\delta}]$.  By uniqueness, this solution agrees with our original solution and we therefore obtain a solution on $[0,3/2T_{\e,\de}]$ which belongs to $  \LE_q(0,3T_{\e,\de}/2)$.
This argument can be iterated until time $T$ is reached, because Lemma \ref{lem.A0qboundperturbed} guarantees the $\LE_q$ norm of the solution is controlled by $M_2$ up to this time and we know the $L^2$ norm is controlled by $M_1$ as well.  Thus, for each $\e>0$, we obtain a solution on $[0,T]$ to the regularized problem and conclude that $u_\e \in \LE_q(0,T)$ with bound independent of $\e$.
\end{proof}

\subsection{Local existence for the perturbed problem}
 
\begin{lemma}\label{lemma.localExistence} Let $c_0$ and $\la_0$ be the constants in Lemma \ref{lem.A0qboundperturbed}. 
Assume $u_0\in E^2_q$  is divergence free, and $v:\R^3\times [0,T_0]\to \R^3$ satisfies
$\div v=0$ and  
\[
\esssup_{0<t\leq T_0}\| v(t) \|_{L^3_\uloc}<\delta\le c_0 \quad \text{and}\quad 
\esssup_{0<t\leq T_0}\| v(t) \|_{L^4_\uloc}<\I.
\]
Let $T= \min (T_0,\la_0,\la_0A_{0,q}^{-2})$.    
Then, there exists a weak solution $u$ and pressure $p$ so that  
\EQ{\label{0819a}
\partial_t u -\Delta u + u \cdot\nb u + v \cdot \nb u + u\cdot \nb v+\nb p = 0;\qquad \nb\cdot u=0,
}
in the distributional sense on $\R^3\times [0,T]$.
Furthermore, $u$ and $p$ are a local energy solution to the perturbed Navier-Stokes equations \eqref{0819a} satisfying
\Eq{\label{ineq.target3}
\norm{u}_{\LE_q(0,T)} 
\leq C \|u_0\|_{E^2_q},
}
for a constant $C$.
\end{lemma}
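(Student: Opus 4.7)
The plan is to construct $u$ as a limit of solutions to the regularized perturbed problem supplied by Lemma \ref{lemma.existenceRegProblem}, combined with a truncation of the data, following the Lemari\'e-Rieusset paradigm adapted to $E^2_q$. First, since $C_c^\infty(\R^3)$ is dense in $E^2_q$ for $q<\infty$, approximate $u_0$ by divergence-free vector fields $u_0^{(n)}\in L^2\cap E^2_q$ with $u_0^{(n)}\to u_0$ in $E^2_q$ and $\|u_0^{(n)}\|_{E^2_q}\le 2\|u_0\|_{E^2_q}$ (standard truncate-and-project). Then, for each $n$ and each $\e_m\to 0^+$, Lemma \ref{lemma.existenceRegProblem} (applied to data $u_0^{(n)}$, perturbation $v$, parameter $\e_m$) produces a smooth solution $u_m^{(n)}$ of the $\e_m$-regularized perturbed system on $\R^3\times[0,T_n]$, where $T_n=\min(T_0,\la_0,\la_0 A_{0,q}(u_0^{(n)})^{-2})$. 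Since $A_{0,q}(u_0^{(n)})\le 4 A_{0,q}(u_0)$, we may take $T_n\ge T$ (after absorbing a universal constant into $\la_0$). The key point is that the bound \eqref{ineq.target2} gives
\[
\|u_m^{(n)}\|_{\LE_q(0,T)}\le C\|u_0\|_{E^2_q}
\]
uniformly in both $m$ and $n$.

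Next, extract a limit in two stages. Fix $n$ and let $m\to\infty$. The uniform $\LE_q$ bound provides uniform $L^\infty_t L^2_\uloc\cap L^2_t H^1_{\loc}$ control. Using the equation together with $v\in L^\infty L^4_\uloc$, $\partial_t u_m^{(n)}$ is controlled locally in a negative-order Sobolev space (the worst term, $u_m^{(n)}\cdot\nb(\eta_{\e_m}*v)$, is controlled by H\"older with the $L^4_\uloc$ bound on $v$, and the local pressure bounds follow from the splitting used in the proof of Lemma \ref{lem.A0qboundperturbed}). Aubin-Lions then yields a subsequence converging strongly in $L^3_{\loc}(\R^3\times(0,T))$ to a limit $u^{(n)}$, with the gradient converging weakly in $L^2_{\loc}$. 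Strong convergence passes the quadratic nonlinearity and the cross terms $v\cdot\nb u$ and $u\cdot\nb v$ (the latter using the $L^4_\uloc$ bound on $v$ paired with the $L^4_{\loc}$ Gagliardo-Nirenberg interpolation of $u^{(n)}$). The pressure of the limit is recovered from the formula in Definition \ref{def:localLeray}(3) for $u^{(n)}$, and convergence in $L^{3/2}_{\loc}$ follows from Calder\'on-Zygmund for the near-field part and dominated convergence for the absolutely convergent far-field kernel. Repeating the procedure for $n\to\infty$ produces the desired $u$ with data $u_0$.

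The main obstacle is to verify that the perturbed local energy inequality survives both limit passages, together with the associated pressure representation. Weak lower semicontinuity handles the dissipation term, and the cubic term $|u|^2(u\cdot\nb\phi)$ passes by strong $L^3_{\loc}$ convergence. The pressure-velocity term is split into near-field and far-field parts exactly as in the proof of Lemma \ref{lem.A0qboundperturbed}: the near-field is a Calder\'on-Zygmund operator applied to a compactly-supported quadratic, which is continuous under strong $L^2_{\loc}$ convergence; the far-field is a convolution with an $L^1$ kernel (in $l^1$ after discretization), so dominated convergence applies. The perturbation contributions $\int(v\cdot\nb u)\,u\phi$ and $\int(u\cdot\nb v)\,u\phi$ pass to the limit using the $L^\infty L^4_\uloc$ hypothesis on $v$ together with $\nb u\in L^2_{\loc}$ and strong $L^4_{\loc}$ convergence of $u$. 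Finally, convergence of $u(t)$ to $u_0$ in $L^2_{\loc}$ and weak-in-time continuity (items 4 and 6 of Definition \ref{def:localLeray}) follow from the equicontinuity implicit in the uniform time-derivative bounds, as in the standard construction of local energy solutions.
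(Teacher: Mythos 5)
Your proposal is correct and follows essentially the same route as the paper: approximate $u_0$ by divergence-free $L^2$ data, invoke Lemma \ref{lemma.existenceRegProblem} for the regularized perturbed problem with $\LE_q$ bounds uniform in the mollification and the data approximation, pass to the limit by local compactness (the paper cites the Kikuchi--Seregin convergence scheme rather than Aubin--Lions directly), and verify the perturbed local energy inequality, with the delicate cross term handled via the $L^\I L^4_\uloc$ bound on $v$ exactly as in the paper's estimate of $I_{1,n}$. The only cosmetic differences are that the paper ties the data approximation to the mollification parameter (one limit instead of your two-stage limit), obtains \eqref{ineq.target3} from the local energy inequality of the limit rather than from lower semicontinuity of the $\LE_q$ norm, and writes the term you denote $u\cdot\nb v$ in the integrated-by-parts form of \eqref{ineq.plei}, since $\nb v$ need not exist under the stated hypotheses.
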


Above, $v=0$ is allowed in which case the solution is a local energy solution. Recall that a local energy solution to the perturbed Navier-Stokes satisfies Definition \ref{def:localLeray} with the obvious modifications when $v\neq 0$, including the \emph{perturbed} local energy inequality \eqref{ineq.plei}. Technically, the $L^4_\uloc$ quantity only needs to be finite, not small. Also, the exponent $4$ can be replaced by any $p>3$---we work with $4$ for simplicity.

\begin{proof}
Fix $u_0\in E^2_q$.  For every $\e>0$ we can find a divergence free vector field $u_0^{(\e)} \in L^2$ so that $\|   u_0-u_0^{(\e)}\|_{E^2_q}<\e$ (this can be done using the Bogovskii map which is described in \cite{Tsai-book}).  Let $u_\e$ denote the solution described in Lemma \ref{lemma.existenceRegProblem} with data $u_0^{(\e)}$. Note that our setup implies the same uniform bounds in \cite{KiSe}, as well as a uniform bound on $\partial_t u_n$  in the dual space of $L^3(0,T; W^{1,3}_0(B_M))$, and therefore we have the same convergence properties as on \cite[p.~156]{KiSe}, namely for a subsequence $u_n:=u_{\e_n}$ of $u_\e$ and $p_n:=p_{\e_n}$ of $p_\e$,
 \begin{align*}
&u_{n}\overset{\ast}\rightharpoonup u \quad \text{in }L^\infty(0,T;L^2_\loc )
\\&u_{n}  \rightharpoonup u\quad \text{in }L^2(0,T;H^1_\loc)
\\&u_n,\,\eta_{\e_n}*u_n \to u  \quad \text{in }L^3(0,T;L^3_\loc)
\\& \eta_{\e_n} * v \to v \quad \text{in }L^3(0,T;L^3_\loc)
\\&p_n^{(k)} \rightharpoonup p_k \quad \text{in }L^{3/2}(0,T;L^{3/2}(B_k))
\end{align*}
as $n\to \infty$ where $p_k(x,t) = p(x,t)-c_k(t)$ for $x\in B_k(0)$ and $t\in (0,T_0]$ for some $c_k\in L^{3/2}(0,T_0)$ and $p_n^{(k)}$ is the local pressure expansion for $p_n$ in the ball $B_k(0)$,
and  $u$ and $p$ are a local energy solution to the perturbed Navier-Stokes equations with initial data $u_0$---the fact that $u$ and $p$ are as claimed follows from the arguments in \cite{KiSe} and we omit redundant details.  We do check the \emph{perturbed} local energy inequality holds as this is used to show the solution is appropriately bounded in $\LE_q(0,T)$.  
The perturbed local energy inequality is as follows: For any nonnegative $\phi\in C_c^\I(\R^3\times {[}0,T))$, 
 \EQ{\label{ineq.plei}
&2\iint |\nabla u|^2\phi\,dx\,dt 
\\&\leq \int |u_0|^2\phi \,dx+ 
\iint |u|^2(\partial_t \phi + \Delta\phi )\,dx\,dt +\iint (|u|^2+2p)(u\cdot \nabla\phi)\,dx\,dt
\\&+ \iint  |u|^2 v\cdot\nb \phi \,dx\,dt +2 \iint (u\cdot \nb u )\cdot (v\phi)\,dx\,dt   +2\iint (u\cdot v) (u\cdot\nb\phi)\,dx\,dt,
}
where the time integrals are over the full interval $[0,T]$.  
The known compactness arguments ensure the first two lines above are inherited for $u$ from the approximation scheme. We therefore only need to address convergence of terms corresponding to the last line from \eqref{ineq.plei}. The first and last of these are lower order so we focus on the middle term. We have
\EQ{
&\bigg| \iint (u_n\cdot \nb u_n ){\cdot (\eta_{\e_n} * v) \phi  }- (u\cdot \nb u)\cdot v\phi \,dx\,dt\bigg|
\\&\leq \bigg| \iint ((u_n- u)\cdot \nb u_n)\cdot (v\phi ) \,dx\,dt\bigg| + \bigg| \iint (u\cdot \nb (u_n - u ))\cdot (v\phi )\,dx\,dt \bigg|
\\&+ \bigg| \iint (u_n\cdot \nb u_n )\cdot (\eta_{\e_n} * v-v) \phi \,dx\,dt \bigg|
\\&=: I_{1,n}+I_{2,n}+I_{3,n}.
}
We need to show that the above three quantities vanish as $n\to \I$. 
Let $B$ denote a ball containing the support of $\phi$.
We have 
\[
I_{1,n} \lesssim \|v\|_{L^\I L^4_\uloc}\|u_n-u\|_{L^2(0,T;L^2(B))}^{1/4} \| u_n, u\|_{L^2(0,T;H^1(B))}^{7/4},
\]
where we used H\"older's inequality and log-convexity of $L^p$ norms. This plainly vanishes as $n\to \I$ by strong convergence of $u_n$ to $u$ in $L^2(0,T;L^2(B))$.
The second term $I_{2,n}\to 0$ by weak convergence of $u_n$ to $u$ in $L^2 (0,T;H^1(B))$ and $u_iv_j\phi\in L^2 (B \times (0,T))$. The last term vanishes by the uniform bound of $u_n$ in $L^2 (0,T;H^1(B))$ and the strong convergence of $(\eta_{\e_n} * v )\phi $ to $v \phi$ in $L^\I (0,T;L^3(B))$. We have thus established the local energy inequality initiated at $t=0$. 
Following \cite[(3.28)-(3.29)]{KwTs}---the perturbation terms do not change the argument---we obtain the other form of the local energy inequality: for any non-negative $\psi\in C_c^\I(\R^3)$ and $t\in (0,T)$, we have 
\EQ{\label{ineq.plei2}
&\int |u(t)|^2\psi \,dx +
2\int_0^t\!\!\int |\nabla u|^2\psi\,dx\,dt 
\\&\leq \int |u_0|^2\psi \,dx+ 
\int_0^t\!\!\int |u|^2\Delta\psi \,dx\,dt +\int_0^t\!\!\int (|u|^2+2p)(u\cdot \nabla\psi)\,dx\,dt
\\&+ \int_0^t\!\!\int  |u|^2 v\cdot\nb \psi \,dx\,dt +2 \int_0^t\!\!\int (u\cdot \nb u )\cdot (v\psi)\,dx\,dt   +2\int_0^t\!\!\int (u\cdot v) (u\cdot\nb\psi)\,dx\,dt.
}

We now establish the bound for $\|u\|_{\LE_q(0,T)}$. 
Let $\phi$ be as in the proof of Lemma \ref{lem.A0qbound} and take $\psi(x) = \phi(x-k)$ in \eqref{ineq.plei2}, $k \in \Z^3$.
Assume $|k|\leq K$ for some large $K$. The right hand side of \eqref{ineq.plei2} can be obtained as the limit of terms corresponding to $u_n$; hence we can make the difference of the above terms and those for $u_n$ less than  $\frac 1 {2^K K^3}$ uniformly across all $|k|\leq K$. Taking these approximate terms,  applying standard estimates (see \eqref{th2.2-3}), taking the essential supremum in time, and summing (in the $l^{q/2}$ sense) over indexes $|k|\leq K$, we find that the right hand side is bounded independently of $n$ for $n\geq N_K$ for some sufficiently large $N_K$. This observation leads to the bound 
\EQN{
&\bigg(\sum_{|k|\leq K} \bigg(\esssup_{0<t<T} \int_{B_1(k)} |u(x,t)|^2 \,dx + \int_0^{T}\int_{B_1(k)} |\nb u|^2\,dx \,dt\bigg)^{q/2} \bigg)^{1/q}
 \leq C\|u_0\|_{E^2_q} +\frac C {2^K}. 
}
Since this is true for all $K\in \N$, it follows that $u\in \LE_q(0,T)$. The bound $\|u\|_{\LE_q(0,T)} \leq C\|u_0\|_{E^{2}_q}$ now follows the above (or from Lemma \ref{lem.A0qboundperturbed}).
\end{proof} 

\begin{remark}
\label{rmk.LR.coherence} The approximation scheme we used here is identical to that in  \cite[Ch 21  {\S3}]{LR}  {(except we do not include mollification in time on $v$)} where finite energy solutions are constructed to the perturbed Navier-Stokes equations. This means that  the solutions constructed in Lemma \ref{lemma.localExistence} can be taken to agree with the solutions constructed in  \cite[Ch 21]{LR} provided the data is in $L^2$. We may therefore use the properties of solutions in\cite{LR}  freely when working with the solutions constructed in Lemma \ref{lemma.localExistence} assuming $u_0\in L^2$.

\end{remark}

\section{Global existence of solutions in $\LE_q$}\label{sec.global} 

The typical extension argument from a local to global solution involves splitting the initial data between a small sub-critical  or critical part, which leads to small, strong solution to the Navier-Stokes equations, and an $L^2$ part, which leads to a weak solution to the perturbed Navier-Stokes equations \cite{Calderon,LR,KiSe}. The sum of these then extends the original solution on a uniform time-step. For us, we hope to  choose $u(t_0)\in E^4_q$ and decompose $u(t_0)=v_0+w_0$ where $w_0\in L^2$ and $\|v_0\|_{E^4_q}<\delta$.  By Lemmas \ref{lemma.existenceRegProblem} and \ref{lemma.E3existence}, there is a small constant $\tau_0>0$ and a local energy solution $v$ in $\LE_q(t_0,t_0+{\tau_0})   \cap L^\infty(t_0,t_0+\tau_0; L^3_\uloc)$ provided $\delta$ is sufficiently small.   There is also a solution $w$ in the local energy class to a perturbed problem so that $v+w$ is a local energy solution on $[t_0,t_0+{\tau_0}]$ with initial data $u(t_0)$.  Using Lemma \ref{lemma.E3existence}, we then glue $u$ to $w+v$ to obtain a solution to the Navier-Stokes equations on $[0,t_0+{\tau_0}]$.   

We need to additionally show that $u\in \LE_q(t_0,t_0+{\tau_0})$, and for this need to show $w\in  \LE_q(t_0,t_0+{\tau_0})$.  
The problem is that $w\in L^\infty L^2 \cap L^2H^1$ does not imply $w\in \LE_q$.  In particular, we need to have
\[
\sum_{k\in \Z^3}  \esssup_{t_0<t<t_0+{\tau_0}} \int_{B_1(k)}|u(x,t)|^2\,dx \leq \esssup_{t_0<t<t_0+{\tau_0}}  \sum_{k\in \Z^3} \int_{B_1(k)} |u(x,t)|^2\,dx = \|u\|_2^2,
\]
which is not generally true by Example \ref{example1.2}. To overcome this issue, we establish the following lemma.

\begin{lemma}\label{lemma.L2toLEq}
Let $2\le q<\infty$.
Assume $u_0\in L^2$ and is divergence free, and assume
$v:\R^3\times [0,T_0]\to \R^3$ satisfies
$\div v=0$ and
\[
\esssup_{0<t\leq T_0}\| v(t) \|_{L^3_\uloc}<\delta\le c_0 \quad \text{and}\quad 
\esssup_{0<t\leq T_0}\| v(t) \|_{L^4_\uloc}<\I,
\]
where $c_0$ from Lemma \ref{lem.A0qboundperturbed}.
For any $T \in (0,T_0]$, if $\de\le \delta_0(T)\le c_0$ is sufficiently small, then there exists a local energy solution $u$ to the perturbed Navier-Stokes equations 
\[
\partial_t u - \Delta u +u\cdot \nb u +v\cdot\nb u +u\cdot \nb v+\nb p = 0,\quad \div u=0,
\]
so that $u\in \LE_q(0,T)$.  In particular, this is true when $v\equiv 0$.
\end{lemma}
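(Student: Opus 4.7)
The plan is to follow the construction in Lemma \ref{lemma.localExistence} --- namely to obtain $u$ as a weak limit of the regularized solutions $u_\e$ from Lemma \ref{lemma.existenceRegProblem} --- but to carry a uniform $\LE_q(0,T)$ bound across the full interval $[0,T]$, rather than only on the short time scale produced by Lemma \ref{lemma.localExistence}. The missing ingredient is a time-iteration of the a priori bound of Lemma \ref{lem.A0qboundperturbed}, which (by the remark following its proof) applies to $u_\e$ as well. Note that $u_0\in L^2\subset E^2_q$ with $\|u_0\|_{E^2_q}\le \|u_0\|_{L^2}$ because $\ell^2\subset \ell^q$ for $q\ge 2$, so the regularized problem is in the scope of Lemma \ref{lemma.existenceRegProblem}.

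First I would record a uniform-in-$\e$ $L^2$ bound
\[
\esssup_{0<t\le T}\|u_\e(t)\|_{L^2}\le M_1.
\]
This is the energy estimate from \cite[Ch.~21]{LR} for the perturbed equation with $L^2$ data. A direct derivation writes
\[
\tfrac{d}{dt}\|u_\e\|_{L^2}^2+2\|\nabla u_\e\|_{L^2}^2 \le 2\Bigl|\int u_\e\otimes u_\e : \nabla(\eta_\e*v)\,dx\Bigr| \le C\delta\bke{\|u_\e\|_{L^2}^2+\|\nabla u_\e\|_{L^2}^2},
\]
where the second inequality follows from a block-sum H\"older bound using $\|v\|_{L^3_\uloc}<\delta$ and local Sobolev on each $B_1(k)$. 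Absorption of $\|\nabla u_\e\|_{L^2}^2$ and Gronwall give $\|u_\e(t)\|_{L^2}^2\le e^{C\delta t}\|u_0\|_{L^2}^2$, which is bounded on $[0,T]$ whenever $\delta\le\delta_0(T)$ is chosen so that $C\delta_0(T)T\le 1$. Consequently $\|u_\e(t)\|_{E^2_q}\le \|u_\e(t)\|_{L^2}\le M_1$ on $[0,T]$, independently of $\e$.

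With this uniform $E^2_q$ control, I would iterate Lemma \ref{lem.A0qboundperturbed} in time. Set $\tau=\min(\lambda_0,\lambda_0 M_1^{-4})$ and partition $[0,T]$ as $0=t_0<t_1<\cdots<t_N=T$ with $t_{j+1}-t_j\le \tau$ and $N\le\lceil T/\tau\rceil+1$. On each $[t_j,t_{j+1}]$ apply the a priori bound with ``initial data'' $u_\e(t_j)\in E^2_q$ and starting time $t_j$ --- the assumptions on $v$ hold throughout $[0,T_0]$, so the bound is stable under time-translation --- to get
\[
\|u_\e\|_{\LE_q(t_j,t_{j+1})}^2 \le C_1\|u_\e(t_j)\|_{E^2_q}^2 \le C_1 M_1^2.
\]
Because $\esssup_{t\in[0,T]}\int_{B_1(k)}|u_\e|^2\le \sum_j \esssup_{t\in[t_j,t_{j+1}]}\int_{B_1(k)}|u_\e|^2$, summing the $j$-pieces gives $\|u_\e\|_{\LE_q(0,T)}^2\le NC_1 M_1^2$, uniformly in $\e$. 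The compactness and limiting arguments of Lemma \ref{lemma.localExistence} then extract a subsequence converging to a local energy solution $u$ of the perturbed Navier--Stokes equations on $\R^3\times[0,T]$ with data $u_0$; the $\LE_q(0,T)$ bound descends to $u$ by lower semicontinuity of the essential supremum and weak-lower-semicontinuity of the spatial $L^2$ norms of $u$ and $\nabla u$.

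The main obstacle is the first step. Because $v\in L^3_\uloc$ (not $L^3$), the perturbation term is not globally integrable a priori, and the block decomposition plus local Sobolev is required before absorption becomes available; the resulting Gronwall factor $e^{C\delta T}$ is precisely the reason $\delta_0(T)$ must shrink with $T$. Once this $L^2$ bound is secured, the remainder is a routine time-iteration of estimates already proved in this paper followed by the standard Kikuchi--Seregin-style limit procedure.
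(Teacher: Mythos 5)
Your proposal is correct in substance, but it reaches the conclusion by a genuinely different route from the paper. The paper works at the level of the \emph{limit} solution: it notes $L^2\subset E^2_q$, invokes Lemma \ref{lemma.localExistence} to get $u\in\LE_q(0,T_0)$ on the short time scale $T_0(\|u_0\|_{E^2_q})$, proves the uniform-in-time $L^2$ bound (by the same block-sum H\"older/local Sobolev estimate you describe, with $\delta$ small depending on $T$ to absorb the perturbation), observes that $u(t)\in E^3$ with $\|u(t)\|_{E^2_q}\le C\|u_0\|_{L^2}$ for a.e.\ $t$, and then \emph{re-solves} from a well-chosen $t_0$ and glues the two local energy solutions using the weak--strong uniqueness theorem for $E^3$ data from \cite{BT8}, iterating with a uniform step. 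You instead keep the approximations $u_\e$ alive on all of $[0,T]$, iterate the translated a priori bound of Lemma \ref{lem.A0qboundperturbed} on subintervals of uniform length $\tau\sim\la_0\min(1,M_1^{-4})$, sum the pieces (legitimate since $q/2\ge1$), and pass to the limit once. What each buys: the paper's route needs the far-field regularity giving $u(t)\in E^3$ and the uniqueness result of \cite{BT8} to glue, but only ever uses the local existence lemma as a black box; your route avoids gluing at the level of weak solutions entirely and is more self-contained, at the cost of having to justify that the a priori bound can be restarted at each $t_j$ (immediate for the smooth $u_\e$, whose local energy inequality is an identity initiated at any time).

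One step you should make explicit: before you can iterate the a priori bound on $[t_j,t_{j+1}]$, you need $u_\e$ to exist on all of $[0,T]$ with $\|u_\e\|_{\LE_q(0,T)}<\infty$ \emph{qualitatively}, whereas Lemma \ref{lemma.existenceRegProblem} only supplies it up to $\min(T_0,\la_0,\la_0A_{0,q}^{-2})$. This is filled by the same continuation used in the proof of that lemma: the local existence time $T_{\e,\de}$ of Lemma \ref{lemma.existence.loc.reg} depends only on $\e$, $\de$ and $\|u_\e(t)\|_{E^2_q}\le C\|u_\e(t)\|_{L^2}\le CM_1$, so your Gronwall bound lets you re-solve with a uniform step and glue by uniqueness of mild solutions; only then does the quantitative iteration of Lemma \ref{lem.A0qboundperturbed} apply. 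With that sentence added, and with the cosmetic fix that the perturbation term should be integrated by parts onto $\div(u_\e\otimes u_\e)$ so that no derivative (and hence no constant $C(\e)$) falls on $\eta_\e*v$ --- which is evidently what your ``block-sum H\"older plus local Sobolev'' intends --- the argument is complete.
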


\begin{proof}
We first prove the case $v=0$ to illustrate the main idea. 
Consider initial data $u_0\in L^2$ and let $a_k = \int_{B_1(k)} |u_0|^2\,dx$ for $k \in \Z^3$. Since
\[
\textstyle
\sum_k a_k^{q/2} \le (\max a_k)^{q/2-1} \sum_k a_k \le (\sum_k a_k)^{q/2},
\]
we have $u_0\in E^2_q$ and $\norm{u_0}_{E^2_q} \le M_2:=C\norm{u_0}_{L^2}$. Now take $u$ to be the solution given by Lemma \ref
{lemma.localExistence} with $v=0$ and $u(0)=u_0$. Then $u\in \LE_q([0,T_0])$ where $T_0$ is the time-scale associated to initial data with size less than or equal to $M_2$ in $E^2_q$.  For almost every $t\in (0,T)$ we know that $u(t)\in E^3$
and $\|u(t)\|_{2}\leq \|u_0\|_2$.   That $u(t)\in E^3$ a.e.\,$t$ follows from the same argument  of \cite[Corollary 4.8]{KwTs} and the same argument
leading to \eqref{uinE4q} in the proof of Theorem \ref{thrm.existence}.
It follows that $\|u(t)\|_{E^2_q}\leq M_2 $ almost everywhere in $t$.  In particular all of these properties hold at some time $t_0\in (T_0/2,T_0)$.  We can therefore re-solve the Navier-Stokes equations starting at time $t_0$ to obtain a {local energy} solution $u_1$ that is also in $\LE_q(t_0,t_0+T_0)$. Using uniqueness of local energy solutions for data in $E^3$ \cite{BT8}, it follows that $u_1 = u$ on $[t_0,t_0+\Delta_1]$ for some small value $\Delta_1$.  This is enough to glue $u_1$ to $u$ to obtain a solution (which we still call $u$) on $[0,3T_0/2]$ that is a local energy solution and in $\LE_q(t_0,t_0+T_0) \cap \LE_q(0,T_0)$.  It is easy to see this last inclusion implies $u\in \LE_q(0,3T_0/2)$.  This argument can be iterated to construct a solution in $\LE_q(0,T)$ for any $T>0$ precisely because we have uniform in time control of $\|u\|_{E^2_q}$.

We next deal with the case when $v\neq 0$.  
We start with data $u_0\in L^2\subset E^2_q$.  Note that the  local energy solution in \cite[Theorem 21.3]{LR} where $T$ comes from the statement of Lemma \ref{lemma.L2toLEq} agrees with the $\LE_q$ solution constructed in Section \ref{sec.local} because they are both limits of the same scheme---see Remark \ref{rmk.LR.coherence}.  Denote this solution by $u$.  Then, assuming $\de\ll c_0$, $u\in \LE_q(0,T_0)$ where $T_0 = T_0(\|u_0\|_{E^2_q})$ is the time-scale of existence from Lemma \ref{lemma.localExistence}.
Using
\EQN{
 \int_{\R^3} |v||u|(|\nb u|+ |u|) &\le \sum_{k} \int_{B_1(k)} |v||u|(|\nb u|+ |u|)
\lec  \|v\|_{ L^3_\uloc} \sum_{k}  \int_{B_1(k)} (|\nb u|^2+ |u|^2), 
}we also have
\EQ{
\|u(t)\|_{2}^2 +2\|\nb u\|_{L^2(0,t;L^2)}^2 \leq \|u_0\|_2^2 + C \|v\|_{L^\infty L^3_\uloc} ( \| \nb u\|_{L^2(0,t;L^2)} ^2 + t \sup_{0<s<t} \|u(s)\|_2^2 ),
}
where $0<t<T$. Presumably $T>T_0$ or else we are done. 
By taking $\delta$ small in a fashion depending on $T$ and using $\|v\|_{L^\infty L^3_\uloc}<\delta$, we can guarantee that 
\[
\sup_{0<s<T}\|u(t)\|_{2}^2 +2\|\nb u\|_{L^2(0,T;L^2)}^2 \leq 2\|u_0\|_2^2.
\]
This implies 
\[
\sup_{0<t<T} \|u(t)\|_{E^2_q}\leq C \|u_0\|_{L^2}.
\]
This gives uniform in time control of $\|u\|_{E^2_q}$ and we proceed as when $v=0$ to complete the proof.
\end{proof}

We are now ready to prove Theorem \ref{thrm.existence}.  In it we use the following version of the $\e$-regularity of \cite{CKN} which is due to \cite{L98}; see also \cite{LS99} for details.
  
\begin{lemma}[$\e$-regularity criteria]\label{thrm.epsilonreg}  
There exists a universal small constant $\e_*>0$ such that, if the pair $(u,p)$ is a suitable weak solutions of \eqref{eq.NSE} in  $Q_r=Q_r(x_0,t_0)=B_r(x_0)\times (t_0-r^2,t_0)$, $B_r(x_0)\subset \R^3$, and    
\[
{\e^3=}\frac 1 {r^2} \int_{Q_r} (|u|^3 +|p|^{3/2})\,dx\,dt <\e_*,
\]
then $u\in L^\I(Q_{r/2})$.
Moreover,
\[
\|  \nabla^k u\|_{L^\I(Q_{r/2})} \leq C_k {\e}\, r^{-k-1},
\]
for universal constants $C_k$ where $k\in \N_0$.
\end{lemma}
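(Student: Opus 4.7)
The plan is to establish this as a classical $\e$-regularity criterion in the Ladyzhenskaya-Seregin / Lin formulation (stronger than CKN's original, in that only the scale-invariant quantity $r^{-2}\int_{Q_r}(|u|^3+|p|^{3/2})$ is assumed small, rather than a combination involving the dissipation). First I would reduce to the case $r=1$ and $(x_0,t_0)=(0,0)$ via the parabolic rescaling $u_r(y,s) = r\, u(x_0+ry, t_0+r^2s)$, $p_r(y,s)=r^2 p(x_0+ry, t_0+r^2s)$, which preserves the suitable weak solution property, the local energy inequality \eqref{CKN-LEI}, and the hypothesized dimensionless quantity.

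Next I would introduce the scale-invariant quantities $K(\rho) = \rho^{-2}\int_{Q_\rho}|u|^3\,dx\,dt$ and $P(\rho) = \rho^{-2}\int_{Q_\rho}|p|^{3/2}\,dx\,dt$ on $Q_\rho = B_\rho\times(-\rho^2,0)$, and prove the one-step decay: there exist $\theta \in (0,1)$ and a universal $C_0$ such that, if $K(1)+P(1) \le \e_*$ with $\e_*$ small, then $K(\theta\rho)+P(\theta\rho) \le \tfrac12(K(\rho)+P(\rho))$ for every $\rho \le 1$. This I would obtain by combining: (i) the local energy inequality applied on $Q_\rho$ to bound $A(\rho/2) + E(\rho/2)$ by powers of $K(\rho)+P(\rho)$, where $A(\rho)=\sup_{-\rho^2<t<0}\rho^{-1}\int_{B_\rho}|u(t)|^2$ and $E(\rho)=\rho^{-1}\int_{Q_\rho}|\nb u|^2$; (ii) Gagliardo-Nirenberg interpolation on $Q_{\theta\rho}$ to get $K(\theta\rho) \le C\theta^3 (A(\rho)+E(\rho))^{3/2}$ modulo lower-order terms; (iii) a pressure decomposition $p = p_{\near}+p_{\far}$ where $p_{\near}$ is a Calder\'on-Zygmund transform of $u\otimes u$ localized on $B_\rho$ (controlled by $|u|^2$-integrals) and $p_{\far}$ is spatially harmonic on $B_\rho$, whose $L^{3/2}$ norm on $B_{\theta\rho}$ decays like $\theta^3$ by the mean-value property. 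Fixing $\theta$ so that $C_0\theta^3 < \tfrac14$ and taking $\e_*$ small to absorb the superlinear terms yields the geometric decay, hence a Morrey-type estimate $K(\rho)+P(\rho) \lec \rho^{2\alpha}$ for some $\alpha>0$.

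A parabolic Morrey embedding (Campanato-type characterization) then upgrades this decay to H\"older continuity of $u$ on $Q_{1/2}$, giving in particular $u\in L^\I(Q_{1/2})$. For the higher derivative bounds $\|\nb^k u\|_{L^\I(Q_{r/2})} \le C_k \e\, r^{-k-1}$, I would bootstrap using interior Schauder (or $L^q$) estimates for the Stokes system $\pd_t u - \De u + \nb p = -u\cdot\nb u$ on a nested sequence of concentric shrinking cylinders; each such application is linear in the forcing, so the smallness $\|u\|_{L^3(Q_{1/2})}^3 + \|p\|_{L^{3/2}(Q_{1/2})}^{3/2} \le C\e^3$ propagates linearly in $\e$ through the bootstrap, accounting for the factor of $\e$ (and not just a power of $\e$) in the final gradient bound.

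The main obstacle will be step (iii), the pressure decomposition propagated across scales. The pressure is nonlocal, and the Calder\'on-Zygmund/far-field split has to be chosen so that, after subtracting the appropriate additive constant (which is absorbed by $u\cdot\nb\phi$ in the local energy inequality just as in Definition \ref{def:localLeray}), the far-field remainder is genuinely harmonic on $B_\rho$ and so admits a real mean-value gain. A second delicacy is the bookkeeping of constants needed to obtain \emph{linear} rather than merely polynomial $\e$-dependence in the gradient estimate; this requires invoking the smallness hypothesis exactly once at each bootstrap step and being careful to write all interpolations so that $\e$ appears to the first power on the right-hand side.
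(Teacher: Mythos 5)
First, note that the paper does not prove this lemma at all: it is imported from the literature, with the decay/compactness argument attributed to Lin \cite{L98} and the detailed write-up to Ladyzhenskaya--Seregin \cite{LS99}. So you are supplying a proof where the paper supplies a citation, and your sketch must be judged on its own. Its overall architecture (rescale, one-step decay of scale-invariant quantities, Morrey/Campanato upgrade, Schauder bootstrap for the derivative bounds with linear $\e$-dependence) is the right shape, and the final bootstrap remark --- that the quadratic forcing contributes $O(\e^2)\le O(\e)$ at each step, so the gradient bounds come out linear in $\e$ --- is correct.

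However, the central step (ii)--(iii) has a genuine gap. The one-step decay $K(\theta\rho)+P(\theta\rho)\le \tfrac12\bke{K(\rho)+P(\rho)}$ cannot be closed by ``taking $\e_*$ small to absorb the superlinear terms,'' because the offending terms are not superlinear. The sharp form of the Sobolev--Poincar\'e interpolation is
$K(\theta\rho)\le C\,\theta^{3}A(\rho)^{3/2}+C\,\theta^{-3/2}A(\rho)^{3/4}E(\rho)^{3/4}$,
and the local energy inequality gives only $A(\rho/2)+E(\rho/2)\le C\bke{K(\rho)+P(\rho)}^{2/3}$ (the exponent $2/3$ coming from H\"older on $\iint|u|^2$ and $\iint|p||u|$). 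Substituting, the second interpolation term is $C\,\theta^{-3/2}\bke{K(\rho)+P(\rho)}$ --- \emph{exactly linear} in $K(\rho)+P(\rho)$, with a constant that diverges as $\theta\to 0$. The same problem occurs for the Calder\'on--Zygmund part of the pressure, which contributes $C\,\theta^{-2}K(\rho)$ to $P(\theta\rho)$. Neither term can be beaten by choosing $\theta$ small first and $\e_*$ small second, so the claimed geometric decay does not follow from the ingredients you list. This is precisely why Lin's proof of the decay lemma is a compactness/blow-up argument: one normalizes $v_k=u_k/\e_k$, $q_k=p_k/\e_k^2$, uses the energy bounds plus Aubin--Lions to extract a strong $L^3$ limit solving the \emph{linear} heat/Stokes system, and reads off the $\theta^3$ decay from the smoothness of the limit, with no $\theta^{-3/2}$ loss. (Alternative fixes: iterate a differently weighted functional such as $K(r)+r^{\alpha}P(r)$ together with $A$ and $E$, or route through the ``small dissipation at all scales'' criterion, or use a De Giorgi scheme.) As written, your direct absorption argument fails at this step, and the rest of the proof (Morrey decay, H\"older continuity, bootstrap) has nothing to stand on until it is repaired.
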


\begin{proof}[Proof of Theorem \ref{thrm.existence}]  
Assume $u_0\in E^2_q$ and is incompressible.  By Lemma \ref{lemma.localExistence} with $v=0$, there exists a local energy solution $u$ to the Navier-Stokes equations on $\R^3\times (0,T_0)$ so that  $u\in \LE_q(0,T_0)$.
By Lemma  \ref{lem.A0qbound} with $R=1$, we have
\EQ{\label{E2q-exist1}
\|e_k \| _{l^{q/2}(k\in \Z^3)} \leq  C A_{0,q},\quad
\|f_k \| _{l^{q/3}(k\in \Z^3)} \leq  C A_{0,q}^{3/2},
}
where
\[
e_k = \esssup_{0\leq t \leq T_0} \int_{B_1(k) }\frac {|u|^2}2  \,dx+ \int_0^{T_0}\int_{B_1(k) } |\nabla u|^2\,dx\,dt ,
\]
\[
f_k = \int_0^{T_0}\!\!\int_{B_1(k) } 
|u|^{3} +|p-c_{k,1}(t)|^{\frac32}\,dx\,dt,
\]
and
\[
A_{0,q} =  \bigg\|  \int_{B_1(k)  }  |u_0(x)|^2 \,dx   \bigg\|_{l^{q/2}(k\in \Z^3)} = \norm{u_0}_{E^2_q}^2.
\]

Since 
\[
\lim_{R \to \I} \|f_k \| _{l^{\frac {q}{3}}(k \in \Z^3; |k|>R)}
=0,
\]
by Lemma \ref{thrm.epsilonreg},
there exists $R_0>0$ such that
\[
u \in L^\I\cap C_{\loc} (B_{R_0}^c\times [\tfrac12 T_0,T_0]), \quad 
\]
and
\[
\lim_{ R \to \I} \norm{u}_{L^\I(B_{R}^c\times [\tfrac12 T_0,T_0])} = 0.
\]
In fact, for $|k|>R_0$,
\[
\norm{u}_{L^\infty(B_1(k) \times [\tfrac12 T_0,T_0])} ^3 \le C\sum_{|k'-k|\le 2} f_{k'} .
\]
Thus
\[
\Norm{\norm{u}_{L^\infty(B_1(k) \times [\tfrac12 T_0,T_0])}}_{ l^q (k \in \Z^3; |k|>R_0)}
\le C  \|f_k \| _{l^{q/3}(k\in \Z^3; |k|>R_0-2)}^{1/3} \ll 1.
\]
By \eqref{E2q-exist1} and Sobolev imbedding,
\[
u \in L^{8/3}(0,T_0; L^4(B_{R_0})).
\]
Thus 
\Eq{\label{uinE4q}
u(t) \in E^4_q \text{ for a.e. }t \in [\tfrac12 T_0,T_0].
}
 	
Choose $t_0 >T_0/2$ such that $u(t_0) \in E^4_q$. 
We next construct a local energy solution in $\LE_q(t_0,t_0+\tau_0)$ with initial data $u(t_0)\in E^4_q$, where $\tau_0$ is the fixed time-scale in Lemma \ref{lemma.E3existence}. For any $\de>0$, we can split $u(t_0)$ into a sum of divergence free vector fields $v_0$ and $w_0$,
\[
u(t_0) = v_0 + w_0,\quad \div v_0 = \div w_0=0,
\]
where 
\[
\|v_0\|_{E^4_q}<\de,  \quad w_0\in L^2(\R^3).
\] 
Roughly, $v_0$ should be the tail of $u_0$ and $w_0$ the core.  The splitting can be carried out using the Bogovskii map (see \cite{Tsai-book} for details on the map and \cite{BT5} for a similar application).

By Lemma \ref{lemma.E3existence} and taking $\de$ sufficiently small, there exists a local energy solution $v$ with initial data $v_0$ and pressure  $p$ on  $\R^3\times (t_0,t_0+\tau_0)$ so that $v$ and $p$ are smooth in space and time and
\[
\sup_{t_0\leq t \leq t_0+\tau_0} \|v(t)\|_{L^4_\uloc}
<C \de.
\] 
By the uniqueness assertion of Lemma \ref{lemma.E3existence}, $v$ agrees with the solution given by Lemma \ref{lemma.localExistence}  (we may assume $\tau_0 \le \la_0$; the solution coming from Lemma \ref{lemma.localExistence} is here taken with zero perturbation), and hence $v \in \LE_q(t_0,t_0+\tau_0)$, too.

By Lemma \ref{lemma.L2toLEq} where the perturbation factor is $v$ and again taking $\delta$ sufficiently small to ensure the timescale generated by  Lemma \ref{lemma.L2toLEq} is larger than $\tau_0$, there exists a local energy solution $w$ to the perturbed Navier-Stokes equations in $ \LE_q(t_0,t_0+\tau_0)$ with initial data $w_0$ and an associated pressure $\pi$.  Letting $u_1= v+w$ gives a local energy solution on $\R^3\times (t_0,t_0+\tau_0)$.  Note that to obtain the local energy inequality for $u_1$ we need to use  the local energy inequality for $v+w^{(n)}$ where $w^{(n)}$ approximates $w$ as in the proof of Lemma \ref{lemma.localExistence} (see also \cite{BT6,AB}).

Since $u$ and $u_1$ agree at $t_0$ and $u(t_0)\in E^3$ since $E^4\subset E^3$, \cite[Theorem 1.6.b]{BT8} implies, for some $\ga>0$, that  $u(x,t)=u_1(x,t)$ on $\R^3\times (t_0,t_0+\ga)$.  Hence, we may glue $u_1$ to $u$ to obtain a local energy solution $u\in \LE_q (0,t_0+\tau_0)$. Repeating this procedure $n$ times leads to a  solution $u\in \LE_q(0,t_0+ n\tau_0)$. Letting $n\to \I$  yields the solution described in Theorem \ref{thrm.existence}.
\end{proof}

\section{Appendix: NSE with $E^p$ data, $p>3$}\label{sec.appendixE3}

In this section we state and prove a lemma on the existence of smooth solutions for data in $E^4$.  Note that $E^4$ can be replaced with $E^p$ for any $p>3$, and we choose $p=4$ for simplicity. The ideas used are standard and the conclusion is no doubt known. However, we cannot find the statement and proof of what we need  exactly in the literature.  Most elements of a proof can be pieced together from \cite{LR}. Additionally, a similar lemma without proof is given in \cite{KiSe}.  Our proof relies on the local energy methods in \cite{BT8} and the mild solution theory in \cite{MaTe}.

\begin{lemma}\label{lemma.E3existence}
Suppose $a\in E^4$   and is divergence free.   
Assume also that \[\delta:=  \| a\|_{L^4_\uloc} < \e_*,\] for a universal constant $\e_*$.  There exists a second universal constant $\tau_0>0$ and  $v$ and $p$ comprising a local energy solution to \eqref{eq.NSE} in $\R^3\times (0,\tau_0)$ with initial data $a$ so that $v$ and $p$ are smooth in space and time, $v\in C([0,\tau_0];E^4) $  and   
\[
	\sup_{0\leq t \leq \tau_0} \|v(t)\|_{L^4_\uloc} <C \de.
\] 
Furthermore, if $u\in \mathcal N(a)$ then $u=v$ on $\R^3\times [0,\tau_0]$.
\end{lemma}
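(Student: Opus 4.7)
I will construct $v$ as a mild solution to \eqref{eq.NSE} by Picard iteration in $L^\infty((0,\tau_0); L^4_\uloc)$, bootstrap to obtain smoothness, recover the pressure, verify the local energy solution properties, and establish uniqueness in $\mathcal N(a)$ via a weak-strong type argument.

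\textbf{Step 1: Mild solution.} Starting from $v_0 \equiv 0$, the Picard scheme
\[ v_{n+1}(t) = e^{t\Delta}a - \int_0^t e^{(t-s)\Delta}\mathbb P\nabla\cdot(v_n \otimes v_n)\,ds \]
is expected to converge in $L^\infty((0,\tau_0); L^4_\uloc)$. The relevant bilinear estimate exploits $\|v\otimes v\|_{L^2_\uloc} \le \|v\|_{L^4_\uloc}^2$ together with the bound for $e^{t\Delta}\mathbb P\nabla\cdot: L^2_\uloc \to L^4_\uloc$ (whose kernel has a $t^{-7/8}$ singularity, integrable in time) from \cite{MaTe}. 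This yields
\[ \|v_{n+1}\|_{L^\infty((0,\tau_0); L^4_\uloc)} \le C\delta + C\tau_0^{1/8}\|v_n\|_{L^\infty((0,\tau_0); L^4_\uloc)}^2, \]
so fixing $\tau_0$ universally and choosing $\e_*$ small enough, a standard contraction argument produces a fixed point $v$ with $\sup_{t\in(0,\tau_0)}\|v(t)\|_{L^4_\uloc} \le 2C\delta$. An analogous difference estimate gives uniqueness of $v$ among mild solutions in this class.

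\textbf{Step 2: Regularity, pressure, and continuity into $E^4$.} Iterated bootstrapping of the Duhamel formula, exploiting the smoothing of $e^{(t-s)\Delta}$ for $s<t$, should give $v \in C^\infty(\R^3 \times (0,\tau_0])$. Since $v \otimes v \in L^\infty L^2_\uloc$, the pressure $p$ is well defined (up to a function of time) via the local expansion \eqref{pressure.dec} as in \cite{KiSe}. To establish $v \in C([0,\tau_0]; E^4)$, I will approximate $a$ by $a_n \in C_c^\infty$ in $E^4$, note that the corresponding solutions $v_n$ belong to $C([0,\tau_0]; L^4) \subset C([0,\tau_0]; E^4)$ by classical strong-solution theory, and pass to the limit using the stability of the Picard map in $L^\infty_t L^4_\uloc$; since $E^4$ is closed in $L^4_\uloc$, the limit inherits continuity into $E^4$.

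\textbf{Step 3: Local energy inequality and uniqueness.} Smoothness of $(v,p)$ turns \eqref{eq.NSE} into a pointwise identity, and multiplying by $v\phi$ with $\phi \in C_c^\infty$ yields the local energy equality, so $(v,p)$ is a local energy solution. For the uniqueness assertion, given $u \in \mathcal N(a)$ I set $w = u - v$ and exploit that $w$ satisfies a perturbed Navier-Stokes system with perturbation $v$; the idea is to feed the local energy inequality for $u$ together with the equation for $v$ into a localized Gronwall estimate, using the smallness $\|v\|_{L^\infty L^4_\uloc} \le C\delta$ to absorb cross terms. The main obstacle I anticipate is handling the pressure-interaction between $u$ and $v$ in the absence of spatial decay on $u$; this is controlled via the pressure expansion \eqref{pressure.dec} together with a partition-of-unity argument, analogous to the local uniqueness machinery of \cite{BT8}, which in fact directly applies since $v \in L^\infty L^4_\uloc \subset L^\infty L^3_\uloc$ and $a \in E^4 \subset E^3$.
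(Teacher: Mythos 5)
Your proposal is correct and follows essentially the same route as the paper: a mild solution in $L^\infty_t L^4_\uloc$ (the paper simply cites \cite{MaTe} for the Picard scheme you sketch), verification that the resulting smooth pair $(v,p)$ is a local energy solution via the local pressure expansion, and uniqueness deferred to the local-energy weak-strong uniqueness theorem of \cite{BT8}, exactly as the paper does via its Theorem \ref{thrm.uniquenessBT8} using the smallness of $N^0_r$ for $r\le 1$ implied by $\|a\|_{L^4_\uloc}<\e_*$. The only cosmetic divergence is your approximation argument for $v\in C([0,\tau_0];E^4)$, where the paper instead invokes $E^4\subset\mathcal L^4_\uloc$ and far-field regularity; both work.
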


That $v\in C([0,\tau_0];E^p)$ means $v\in C([0,\tau_0];L^p_\uloc)$ and $v(t) \in E^p$ for all $t$.
To prove Lemma \ref{lemma.E3existence} we will need the following result from \cite{BT8}. Compared to \cite{MaTe}, this is a weak-strong  uniqueness criteria as opposed to a strong-strong uniqueness criteria.
\begin{theorem}[\cite{BT8} Theorem 1.7]\label{thrm.uniquenessBT8}
Assume $a\in E^2$ and is divergence free. Let $u,\,v\in \mathcal N(a)$.  There exist universal constants $0<\e_3, \tau_0 \leq 1$ so that, if 
\[
\sup_{0<r\leq R} N^0_r \leq \e_3
\]
for some $R>0$,
then $u=v$ as distributions on $\R^3\times (0,T)$, $T= \tau_0 R^2$.
\end{theorem}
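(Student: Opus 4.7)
The plan is to construct $v$ as a mild solution of the integral equation for \eqref{eq.NSE} via Picard iteration in $L^\infty(0,\tau_0;L^4_\uloc)$, verify that it is in fact a member of $\mathcal{N}(a)$, and then invoke Theorem \ref{thrm.uniquenessBT8} to match it against an arbitrary $u\in\mathcal{N}(a)$. The hypothesis $\|a\|_{L^4_\uloc}<\e_*$ plays two distinct roles: it drives the bilinear fixed-point argument, and (via H\"older) it forces the local $L^2$ averages of $a$ to be small on small balls, which is exactly the smallness input that Theorem \ref{thrm.uniquenessBT8} requires.

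For the construction, set $X_T=L^\infty(0,T;L^4_\uloc)$ and consider $B(u,w)(t)=\int_0^t e^{(t-s)\De}\mathbb{P}\nb\cdot(u\otimes w)\,ds$. Using the Oseen-tensor pointwise bounds \eqref{ineq.oseen} and the $L^p_\uloc$ heat-semigroup estimates of \cite{MaTe}, one has $\|e^{(t-s)\De}\mathbb{P}\nb\cdot F\|_{L^4_\uloc}\lesssim (t-s)^{-7/8}\|F\|_{L^2_\uloc}$, and combined with $L^4_\uloc\cdot L^4_\uloc\hookrightarrow L^2_\uloc$ this yields the bilinear estimate $\|B(u,w)\|_{X_T}\le C T^{1/8}\|u\|_{X_T}\|w\|_{X_T}$. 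Since $\|e^{t\De}a\|_{X_T}\le C\|a\|_{L^4_\uloc}\le C\de$, choosing $\tau_0$ as a universal constant (depending on $\e_*$) guarantees Picard iteration converges to a fixed point $v$ with $\|v\|_{X_{\tau_0}}\le 2C\de$. Standard bootstrap from the mild formulation (picking up time weights at each step) promotes $v$ and its pressure to smooth functions on $\R^3\times(0,\tau_0)$. For initial continuity in $E^4$: each Picard iterate $v^n$ lies in $C([0,\tau_0];E^4)$ (inductively, using that $e^{t\De}$ maps $E^p$ to $E^q$ and that $E^4\cdot E^4\subset E^2$), and since $v^n\to v$ in $X_{\tau_0}$, the limit inherits this property; the Duhamel contribution vanishes at $t=0$ via $\|B(v,v)(t)\|_{L^4_\uloc}\lesssim t^{1/8}\de^2$.

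To see $v\in\mathcal{N}(a)$: items (1), (2), (4), (5), (6) of Definition \ref{def:localLeray} follow from smoothness, the $L^\infty L^4_\uloc$ bound (giving $v\in L^\infty L^2_\uloc$ and $\nb v\in L^2 L^2_\uloc$ via the classical local energy equality), and continuity in $E^4$. The verification of item (3), the local pressure representation \eqref{pressure.dec}, is the substantive point: the mild pressure is formally $p=(-\De)^{-1}\pd_i\pd_j(v_iv_j)$, which is only defined modulo a spatially constant function when $v$ does not decay. Splitting $v\otimes v$ into a near part $(v\otimes v)\chi_{4R}(x-x_0)$ (handled by Calder\'on--Zygmund in $L^{3/2}$) and a far part handled via the kernel decay \eqref{ineq.extradecay} produces exactly the two pieces in \eqref{pressure.dec}; the difference between the formal pressure and this localized expression is a function of $t$ only, and each piece is in $L^{3/2}_tL^{3/2}(B_{2R}(x_0))$, so $c_{x_0,R}(t)\in L^{3/2}(0,T')$. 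Having $v\in\mathcal{N}(a)$, let $u\in\mathcal{N}(a)$ be arbitrary. H\"older gives
\[
\frac{1}{r}\int_{B_r(x_0)}|a|^2\,dx\le \frac{|B_r|^{1/2}}{r}\|a\|_{L^4(B_r(x_0))}^2\le C r^{1/2}\de^2,
\]
so $\sup_{0<r\le R}N^0_r\le CR^{1/2}\de^2$. Taking $R$ a universal constant with $CR^{1/2}\e_*^2\le \e_3$, Theorem \ref{thrm.uniquenessBT8} yields $u=v$ on $[0,\tau_0^{BT8}R^2]$; shrinking the constant $\tau_0$ if necessary (universally), this upgrades to $u=v$ on $[0,\tau_0]$.

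The principal obstacle I anticipate is the verification of item (3) of Definition \ref{def:localLeray}. A mild solution comes only with a formal global pressure, and reconciling this with the specific near-plus-far representation prescribed in \eqref{pressure.dec} --- in particular, proving that the constants $c_{x_0,R}(t)$ genuinely lie in $L^{3/2}(0,T')$ rather than merely being a spatially constant gauge freedom --- requires a careful near/far split and an application of the decay estimate \eqref{ineq.extradecay}, with $L^4_\uloc$-control of $v$ ensuring the far-field integral converges absolutely for $x\in B_{2R}(x_0)$. The remaining ingredients (bilinear estimate, smoothness bootstrap, $E^4$-continuity, and the H\"older reduction enabling Theorem \ref{thrm.uniquenessBT8}) are all either standard or immediate.
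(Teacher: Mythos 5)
Your proposal does not prove the stated theorem; it proves (a sketch of) Lemma \ref{lemma.E3existence} and is circular where it matters. Theorem \ref{thrm.uniquenessBT8} asserts that \emph{any two} local energy solutions $u,v\in\mathcal N(a)$ coincide on $(0,\tau_0R^2)$ under the hypothesis that $a\in E^2$ and $\sup_{0<r\le R}N^0_r\le\e_3$. Here $v$ is an arbitrary given element of $\mathcal N(a)$, not an object you get to construct. Your plan builds one particular mild solution by Picard iteration and then, to identify it with an arbitrary element of $\mathcal N(a)$, explicitly ``invokes Theorem \ref{thrm.uniquenessBT8}'' --- the statement under proof. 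That weak--strong identification step is the entire content of the theorem, so nothing has been proved. Even if one repaired the logic by replacing that invocation with an independent weak--strong uniqueness argument, the construction itself cannot start: the theorem assumes only $a\in E^2$ with small $N^0_r$ (a Morrey-type smallness, $M^{2,1}$ up to scale $R$), and provides no $L^4_{\uloc}$ control whatsoever, so the contraction in $L^\infty(0,T;L^4_{\uloc})$ has no initial datum to iterate from. Your H\"older computation runs in the wrong direction for this statement: it deduces smallness of $N^0_r$ \emph{from} smallness of $\|a\|_{L^4_{\uloc}}$, which is the reduction needed in the proof of Lemma \ref{lemma.E3existence}, not here.

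A correct argument (this is the route of \cite{BT8}, which the present paper only cites) works directly with the two given solutions: the smallness $\sup_{0<r\le R}N^0_r\le\e_3$ feeds into the a priori bounds \eqref{ineq.apriorilocal}--\eqref{ineq.apriorilocal2} and the $\e$-regularity criterion to give quantitative interior regularity of \emph{every} element of $\mathcal N(a)$ on $(0,\tau_0R^2)$, of the form $\|u(t)\|_{L^\infty}\lesssim \e_3^{1/2}t^{-1/2}$; one then estimates the difference $w=u-v$ in localized energy norms, using the local energy inequalities satisfied by $u$ and $v$, the pressure representation of item (3) of Definition \ref{def:localLeray} for the difference of pressures, and the common strong $L^2_{\loc}$ attainment of the data at $t=0$, closing with a Gronwall-type argument. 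No auxiliary third solution is constructed. If your intent was to prove Lemma \ref{lemma.E3existence}, the outline is broadly reasonable (and close to the paper's proof of that lemma, which likewise leans on the pressure recovery of \cite{BT7} and on Theorem \ref{thrm.uniquenessBT8} as a black box), but as a proof of Theorem \ref{thrm.uniquenessBT8} it is both circular and based on hypotheses that are not available.
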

See also \cite{Jia-uniqueness}.
Above, the constant of proportionality $\tau_0$ is the same constant appearing in Lemma \ref{lemma.E3existence}.

\begin{proof}[Proof of Lemma \ref{lemma.E3existence}]
 Since $a\in E^4$, $a\in E^2$ and there exists a global in time local energy solution $u$ evolving from $a$ (see \cite{LR, KiSe,KwTs}). Now, assume $\|a\|_{L^3_\uloc}<\e_3$ (this is fine because $\|a\|_{L^3_\uloc} \lesssim \|a\|_{L^4_\uloc}$). Then, for all $r\leq 1$ and all $x_0\in \R^3$,
\[
\frac 1 r \int_{B_r(x_0)} |a|^2\,dx \leq C\bigg(  \int_{B_r(x_0)}|a|^3\,dx \bigg)^{2/3} \leq {C}\e_3.
\]
Hence, by further restricting our time-scale and using Theorem \ref{thrm.uniquenessBT8} we have uniqueness in $\mathcal N(u_0)$ up to time $\tau_0$.

By \cite{MaTe}, since $E^4\subset \mathcal L^4_\uloc$ (the closure of $BUC(\R^3)$ in $L^4_\uloc$-norm), there exists a time-scale $T$ and a unique mild solution $v\in C ([0,T);L^4_\uloc )$ that is smooth on $\R^3\times (0,T)$.
~By taking $\e_*$ sufficiently small, we can ensure the existence time in \cite[Theorem 1.1]{MaTe} is greater than $\tau_0$.

We need to show that $v$ is a local energy solution. By embeddings, the same convergence properties at $t=0$ hold with $L^4$ and $L^4_\uloc$ replaced by $L^2$ and $L^2_\uloc$. This implies that, if $w\in L^2(\R^3) $
is compactly supported, then
\[
\lim_{t\to 0} \int (v(x,t)-a(x))w(x)\,dx = 0.
\]
Continuity of the map
\[
t\mapsto \int v(x,t)w(x)\,dx
\]
at positive times follows from smoothness of $v$ in the space and time variables.  
 
We use \cite{BT7} to recover a pressure $p$ satisfying the local pressure expansion
so that $v$ and $p$ solve
\[
\partial_t v -\Delta v +v\cdot\nb v+\nb p = 0,
\]
as distributions. 
Furthermore, the local expansion for $p$ ensures that $p\in L^{3/2}_\loc(\R^3\times (0,T))$.
The local energy inequality follows from the fact that $v$ and $p$ are smooth in the space and time variables.
Finally, using the local energy inequality and the fact that $v\in L^\I L^3_\uloc$, we obtain item 2 from the definition of local energy solutions.  This proves that $v\in \mathcal N(u_0)$.  Uniqueness then implies $u=v$ on $\R^3\times (0,\tau_0)$ and, therefore,
\[
\| u (t)\|_{L^3_\uloc}\leq C\| u (t)\|_{L^4_\uloc}< C \delta,
\]
for all $t\in (0,\tau_0)$.
 
The solution from \cite{MaTe} belongs to $C([0,\tau_0];L^4_\uloc)$ (this is because $E^4\subset \mathcal L^4_\uloc$---see \cite{MaTe} for the definition of $\mathcal L^4_\uloc$).  By far field regularity of local energy solutions with data in $E^2$, we have $u(t)\in E^4$ for all $t>0$---see e.g.~the proof of Theorem \ref{thrm.existence}. This implies  $u\in C([0,\tau_0];E^4)$.
\end{proof}

Lemma  \ref{lemma.E3existence} is not optimal in the sense that some assumptions can be weakened to yield similar results.  It is, however, necessary and sufficient for our purposes and easy to prove.  In contrast, the following statement is given in \cite{KiSe} without proof.

\begin{lemma}\label{lemma.E3existence2}
Suppose $a\in E^3$ and is divergence free.  Then there exists ${\tau_1}>0$ and  functions $v$ and $p$ comprising a weak solution to \eqref{eq.NSE} in $\R^3\times (0,{\tau_1})$ so that 
\[
	v\in C([0,{\tau_1}];E^3)\text{ and }t^{1/2}v\in L^\I(\R^3\times (0,{\tau_1}) ).
\] 
\end{lemma}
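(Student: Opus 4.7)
The plan is to follow the blueprint of Lemma \ref{lemma.E3existence} but replace the subcritical $L^4_\uloc$-based mild solution theory with the critical $L^3_\uloc$-based Kato-type theory of Maekawa-Terasawa \cite{MaTe}. The lack of a smallness assumption is compensated by the fact that $E^3\subset \mathcal L^3_\uloc$ (closure of $BUC(\R^3)$ in $L^3_\uloc$-norm), which yields
\[
\lim_{T \to 0^+} \sup_{0<t<T} t^{1/2}\|e^{t\Delta} a\|_{L^\I(\R^3)} = 0,
\]
precisely the quantity needed to drive a contraction in the Kato space $X_T =\{v : \sup_{0<t<T} t^{1/2} \|v(t)\|_{L^\I} + \sup_{0<t<T}\|v(t)\|_{L^3_\uloc}<\I\}$ on some interval $(0,\tau_1)$.

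First I would produce the mild solution: since $a\in \mathcal L^3_\uloc$, the Picard iteration for the Duhamel formulation of \eqref{eq.NSE} converges in $X_{\tau_1}$ for some $\tau_1>0$, yielding $v \in C([0,\tau_1]; L^3_\uloc)$ with $t^{1/2} v\in L^\I(\R^3\times(0,\tau_1))$, smooth on $\R^3\times(0,\tau_1)$ by standard bootstrap. Next I would upgrade $L^3_\uloc$-continuity to $E^3$-continuity by approximation: choose divergence-free $a^{(n)}\in C^\I_c$ with $a^{(n)}\to a$ in $L^3_\uloc$, using the Bogovskii corrector to preserve $\div a^{(n)}=0$ (as in the proof of Theorem \ref{thrm.existence}). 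For each $a^{(n)}$ the Picard iterates lie in $E^3$ termwise: the heat evolution of a compactly supported datum is in $E^3$, and the bilinear term $\int_0^t e^{(t-s)\Delta}\mathbb P \nb \cdot (v^{(n)}\otimes v^{(n)})\,ds$ preserves $E^3$ because $v^{(n)}\otimes v^{(n)}$ has small $L^{3/2}_\uloc$ tails and the Oseen kernel \eqref{ineq.oseen} decays like $|x|^{-4}$. By stability of the fixed point in $X_{\tau_1}$, $v^{(n)}\to v$ in $C([0,\tau_1]; L^3_\uloc)$, and since $E^3$ is closed in $L^3_\uloc$, we obtain $v(t)\in E^3$ for every $t\in[0,\tau_1]$, hence $v \in C([0,\tau_1]; E^3)$.

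Finally I would recover the pressure $p$ via the local pressure expansion of \cite{BT7}, applied to the source $v\otimes v \in L^\I(0,\tau_1; L^{3/2}_\uloc)$ (using $v\in L^\I L^3_\uloc$); this yields $p\in L^{3/2}_\loc(\R^3\times(0,\tau_1))$ so that $(v,p)$ solves \eqref{eq.NSE} distributionally, giving a weak solution in the sense sought by the lemma. The hard part is the second step, the upgrade from $L^3_\uloc$- to $E^3$-valued continuity: the critical scaling of $L^3_\uloc$ means the time of existence shrinks with the modulus of $a$ in $\mathcal L^3_\uloc$, so one must choose $\tau_1$ uniformly along the approximating sequence $a^{(n)}$, which requires $\sup_n t^{1/2}\|e^{t\Delta} a^{(n)}\|_{L^\I}\to 0$ as $t\to 0^+$; this is automatic once $a^{(n)}\to a$ in $L^3_\uloc$ together with the $\mathcal L^3_\uloc$ property of each $a^{(n)}$.
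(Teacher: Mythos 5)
Your argument is correct in outline, but it takes a genuinely different route from the paper's. The paper does not run a fixed-point iteration in $L^3_\uloc$ at all: it observes that $L^3_\uloc$ embeds continuously in $bmo^{-1}$ and that $E^3\subset vmo^{-1}$ (via the short computation with $\frac1{|B(x,R)|}\int_{B(x,R)}\int_0^{R^2}|e^{t\Delta}f|^2$), invokes Koch--Tataru \cite{KochTataru} to produce the mild solution together with the bound $t^{1/2}v\in L^\I$, and then quotes Lemari\'e-Rieusset's persistence results (\cite[p.~182, Theorem 18.3]{LR}) plus \cite[Proposition 2.2]{MaTe} to upgrade to $v\in C([0,\tau_1];L^3_\uloc)$ with convergence to the data. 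Your route---the critical-exponent Kato iteration in the two-norm space $X_T$ driven by $\sup_{0<t<T}t^{1/2}\|e^{t\Delta}a\|_{L^\I}\to0$ for $a\in\mathcal L^3_\uloc$, followed by approximation with compactly supported divergence-free data and the closedness of $E^3$ in $L^3_\uloc$---is essentially the $p=n$ case of \cite{MaTe} and is more self-contained: it avoids $vmo^{-1}$ entirely and yields $v(t)\in E^3$ by a transparent closure argument rather than by appealing to far-field regularity. What the paper's route buys is that the hardest analytic step is outsourced to Koch--Tataru and the solution is simultaneously placed in the local energy class (used there for weak-strong uniqueness, though not needed for the lemma as stated). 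Two points you should make explicit to close your version: (i) the contraction in $X_T$ can only exploit smallness of the weighted $L^\I$ seminorm $\|u\|_{K_T}=\sup_t t^{1/2}\|u(t)\|_{L^\I}$, since the $L^3_\uloc$ component of $e^{t\Delta}a$ does not shrink as $T\to0$; this requires the asymmetric bilinear estimate $\|B(u,v)\|_{X_T}\lec \|u\|_{K_T}\|v\|_{X_T}+\|u\|_{X_T}\|v\|_{K_T}$, which holds by splitting the Duhamel integral at $s=t/2$ and is exactly what \cite{MaTe} verify at the critical exponent; and (ii) continuity at $t=0$ in $L^3_\uloc$ needs both $\|v(t)-e^{t\Delta}a\|_{L^3_\uloc}\to0$ from the fixed-point estimates and $\|e^{t\Delta}a-a\|_{L^3_\uloc}\to0$, the latter being precisely where $a\in E^3$ (rather than merely $a\in L^3_\uloc$) is used, cf.\ \cite[Proposition 2.2]{MaTe}.
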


Unlike $\tau_0$ in Lemma \ref{lemma.E3existence}, $\tau_1$ depends on $a$ and is not universal.
While not proven in \cite{KiSe}, this can be proven using results in \cite{LR,KochTataru,MaTe} as well as our proof of Lemma \ref{lemma.E3existence} above. In particular, 
note that $L^3_\uloc$ embeds continuously in $bmo^{-1}$ (see \cite[Proposition 2.1]{MaTe}; continuity is clear upon inspecting the proof).  Also, $E^3\subset vmo^{-1}$ because 
\EQ{
\frac 1 {|B(x,R)|} \int_{B(x,R)}\int_0^{R^2} |e^{t\Delta }f(y)|^2\,dt\,dy
&\leq \frac 1 {R^2} \int_0^{R^2} \bigg( \int_{B(x,R)} |e^{t\Delta}f|^3\,dy\bigg)^{2/3}\,dt,
}  
which vanishes as $R\to 0$ whenever $f\in E^3$.
Because $a\in vmo^{-1}$, there exists a mild solution $v$ on $[0,T]$ for some $T>0$ \cite{KochTataru}. This is a local energy solution and will coincide with any other local energy solution by the uniqueness result in \cite{BT8}---for this we argue as in our proof of Lemma \ref{lemma.E3existence}. By the first remark on \cite[p.~182]{LR}  and   \cite[Theorem 18.3]{LR}, we have that $v\in C( (0,T); L^3_\uloc))$.  Furthermore, $\lim_{t\to 0} \| v -e^{t\Delta}a\|_{L^3_\uloc}=0$, which follows from the fact that $a\in vmo^{-1}$ (which Lemari\'e-Rieusset refers to as $cmo^{-1}$; in \cite[Definition 17.1 (B)]{LR} Lemari\'e-Rieusset specifies what is meant by ``smooth element of'' a shift invariant Banach space) and the second remark on \cite[p.~182]{LR}.   Because $a\in E^3$, we have $\lim_{t\to 0}\|a-e^{t\Delta}a\|_{L^3_\uloc}=0$ (see \cite[Proposition 2.2]{MaTe}).  Therefore, $v\in C(  [0,T];L^3_\uloc)$.

\section{Appendix: Relations between function spaces}\label{sec.appendix}

In this appendix we include several observations that relate $L^{3,\I}$ to the $E^2_q$ space.  These are motivated by the possibility that the $E^2_q$ spaces will prove useful when analyzing solutions with data in $L^{3,\I}$ as they more accurately capture the decay at spatial infinity of the solution than existing local energy estimates.

 \begin{lemma}\label{lem.alphasum}
If $u_0\in L^{3,\I}$, then $u_0\in E^2_q $ for every $q>3$. 
 \end{lemma}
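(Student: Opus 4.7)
The plan is to show that the sequence $a_k := \bigl(\int_{B_1(k)} |u_0|^2\,dx\bigr)^{1/2}$, indexed by $k\in\Z^3$, lies in the weak space $\ell^{3,\infty}(\Z^3)$, and then to invoke the elementary inclusion $\ell^{3,\infty}(\Z^3)\subset \ell^q(\Z^3)$ for every $q>3$.

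First I would record a localized Hölder-type inequality for $L^{3,\infty}$: for any measurable $E\subset\R^3$ of finite measure,
\[
\int_E |u_0|^2\,dx = \int |u_0|^2 \chi_E\,dx \le C \|u_0^2\|_{L^{3/2,\infty}} \|\chi_E\|_{L^{3,1}} \le C |E|^{1/3}\|u_0\|_{L^{3,\infty}}^2,
\]
using Hölder's inequality for Lorentz spaces together with $\|u_0^2\|_{L^{3/2,\infty}}=\|u_0\|_{L^{3,\infty}}^2$ and $\|\chi_E\|_{L^{3,1}}\lesssim |E|^{1/3}$.

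Next, given $\lambda>0$, let $A_\lambda = \{k\in\Z^3 : a_k>\lambda\}$ and set $N(\lambda) = |A_\lambda|$. Since the family $\{B_1(k)\}_{k\in\Z^3}$ has bounded overlap, the union $E_\lambda := \bigcup_{k\in A_\lambda} B_1(k)$ satisfies $|E_\lambda|\le C N(\lambda)$, and
\[
\lambda^2 N(\lambda) \le \sum_{k\in A_\lambda} \int_{B_1(k)}|u_0|^2\,dx \le C\int_{E_\lambda}|u_0|^2\,dx \le C N(\lambda)^{1/3}\|u_0\|_{L^{3,\infty}}^2.
\]
Rearranging gives $N(\lambda)\le C\lambda^{-3}\|u_0\|_{L^{3,\infty}}^3$, which is exactly the statement that $(a_k)\in\ell^{3,\infty}(\Z^3)$ with norm controlled by $\|u_0\|_{L^{3,\infty}}$. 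Equivalently, the decreasing rearrangement satisfies $a_k^* \le C N^{-1/3}\|u_0\|_{L^{3,\infty}}$ when $a_k^*$ is the $N$-th largest value.

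Finally, for $q>3$ I would conclude
\[
\|u_0\|_{E^2_q}^q = \sum_{k\in\Z^3} a_k^q = \sum_{N\ge1} (a_N^*)^q \le C\|u_0\|_{L^{3,\infty}}^q \sum_{N\ge1} N^{-q/3} < \infty,
\]
since $q/3>1$. The main (very mild) obstacle is just producing the right version of Hölder's inequality for $L^{3,\infty}$ with the $|E|^{1/3}$ factor; once this is in hand, the rest reduces to the standard chain $L^{3,\infty}\leftrightarrow \ell^{3,\infty}\subset \ell^q$.
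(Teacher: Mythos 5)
Your proof is correct and follows essentially the same route as the paper: both establish the weak-type bound $\#\{k: a_k>\lambda\}\lesssim \lambda^{-3}\|u_0\|_{L^{3,\infty}}^3$ via the Lorentz--H\"older estimate $\int_E|u_0|^2\lesssim |E|^{1/3}\|u_0\|_{L^{3,\infty}}^2$ applied to the union of the balls in a level set, and then sum. The only cosmetic difference is that the paper performs the final summation over dyadic level sets $S_j=\{k: 2^j\le a_k^2<2^{j+1}\}$ rather than via the decreasing rearrangement.
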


This fails for $q\le 3$ since 
$\frac 1{|x|} \in L^{3,\I}$ and $\frac 1{|x|} \not \in E^2_q$ for all $q \le 3$.
 \begin{proof}
Let  $\al_k=\int_{B_1(k)} |u_0|^2\,dx$ for all $k\in \Z^3$.  
Let \[
E(\si)=\cup_{\{k\in \Z^3:\, \al_{k}>\si\}}  B_1{(k)}.
\]
Then, $|E(\si)|\simeq\# \{ k\in \Z^3:\al_k>\si \}$.
Since $u_0\in L^{3,\I}$,
\[
\frac 1 {|E(\si)|^{1/3}}\int_{E(\si)} |u_0|^2\,dx \leq C \|u_0 \|_{L^{3,\I}}^{{2}},
\]
for all $\si>0$. As the left side is greater than $C\si |E(\si)|^{2/3}$, we get
\[
|E(\si)| \le C\si^{-2/3}  \|u_0 \|_{L^{3,\I}}^3, \quad \forall \si>0.
\]
Then, letting $S_j = \{   k\in \Z^3: 2^{j}\leq \al_k <2^{j+1}  \}$, we have
\[
\sum_{k\in \Z^3} \al_k^{q/2}= \sum_{j\in \Z} \sum_{k\in S_j} \al_k^{q/2}.
\]
Since $\al_k\in L^\I(\Z^3)$ and $|S_j|\lesssim |E(2^j)|<\infty$,
$\sum_{j\geq 0 }   \sum_{k\in S_j} \al_k^{q/2}$ is finite.
On the other hand, since $|S_j|\lesssim |E(2^j)|$,
\[\sum_{j< 0 }   \sum_{k\in S_j} \al_k^{q/2}  \leq \sum_{j<0}  |S_j|  2^{(j+1)q/2}  \leq C   \|u_0 \|_{L^{3,\I}}^{{3}}\sum_{j<0} \frac {2^{(j+1)q/2}} { 2^{3j/2}},\]
which converges whenever $q>3$.
Therefore, $u_0\in E^{2}_q$ for all $q>3$.
 \end{proof}

The next lemma shows how to connect critical, summed quantities at all scales to the $L^{3,\I}$ norm of the initial data. When $u\in \mathcal N(u_0)\cap \LE_q$ and $3<q<6$, this lemma and Theorem \ref{thrm.boundE2q} give estimates on $u$ and its decay at spatial infinity directly in terms of the $L^{3,\I}$ norm of the initial data.   Recall $A_{0,q}(R)= \bke{\int_{B_R(Rk)}|u_0|^2}_{l^{q/2}(k\in \Z^3)}$ is defined in Lemma \ref{lem.A0qbound}.
 \begin{lemma}\label{lem.A0q}
If $u_0\in L^{3,\I}(\R^3)$, then, for any $q>3$, $A_{0,q}(R)\leq CR\|u_0\|_{L^{3,\I}}^2 $ for all $R>0$. 
 \end{lemma}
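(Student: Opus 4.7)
The plan is to mimic the argument of Lemma \ref{lem.alphasum}, but scale up to balls of radius $R$ and keep track of constants explicitly. Set $\alpha_k = \int_{B_R(Rk)}|u_0|^2\,dx$ for $k \in \Z^3$, so that our task reduces to proving
\[
\Norm{(\alpha_k)}_{l^{q/2}(\Z^3)} \le CR \|u_0\|_{L^{3,\infty}}^2.
\]
The main ingredient is the elementary estimate, valid for $u_0 \in L^{3,\infty}$ and any measurable set $E \subset \R^3$,
\[
\int_E |u_0|^2 \, dx \le C \|u_0\|_{L^{3,\infty}}^2 |E|^{1/3},
\]
which follows from a layer-cake computation using $|\{|u_0|^2 > t\}| \le \|u_0\|_{L^{3,\infty}}^3 t^{-3/2}$ and splitting the integral at $t_0 = \|u_0\|_{L^{3,\infty}}^2 |E|^{-2/3}$.

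The main step is a distributional bound on $\alpha_k$. For each $\sigma>0$, form $E(\sigma) = \bigcup_{k:\,\alpha_k>\sigma} B_R(Rk)$; the balls have bounded overlap, so $|E(\sigma)| \le C R^3 \#\{k:\alpha_k>\sigma\}$. From the trivial bound $\sigma \#\{\alpha_k>\sigma\} \le \sum_{\alpha_k>\sigma} \alpha_k \le \int_{E(\sigma)}|u_0|^2\,dx$ and the Lorentz estimate above,
\[
\sigma \#\{k:\alpha_k>\sigma\} \le CR \|u_0\|_{L^{3,\infty}}^2 \bke{\#\{k:\alpha_k>\sigma\}}^{1/3},
\]
which rearranges to
\[
\#\{k:\alpha_k>\sigma\} \le C R^{3/2} \sigma^{-3/2} \|u_0\|_{L^{3,\infty}}^3, \quad \forall \sigma>0.
\]
Equivalently, $(\alpha_k) \in l^{3/2,\infty}(\Z^3)$ with $\|(\alpha_k)\|_{l^{3/2,\infty}} \le CR \|u_0\|_{L^{3,\infty}}^2$; in terms of the decreasing rearrangement, $\alpha_k^* \le CR\|u_0\|_{L^{3,\infty}}^2 k^{-2/3}$.

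The final step is the inclusion $l^{3/2,\infty}(\Z^3) \subset l^{q/2}(\Z^3)$, available precisely because $q>3$ makes $q/2 > 3/2$: summing $(\alpha_k^*)^{q/2}$ gives
\[
\sum_k \alpha_k^{q/2} \le (CR\|u_0\|_{L^{3,\infty}}^2)^{q/2} \sum_{k \ge 1} k^{-q/3} \le C_q (R\|u_0\|_{L^{3,\infty}}^2)^{q/2},
\]
which is the desired bound $A_{0,q}(R) \le CR\|u_0\|_{L^{3,\infty}}^2$. The only real subtlety is the blow-up of the constant $C_q$ as $q \downarrow 3$, reflecting the sharpness example $|x|^{-1} \in L^{3,\infty}\setminus E^2_3$ noted in Remark \ref{remark.e2qdata}; no step poses a genuine obstacle.
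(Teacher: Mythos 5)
Your proof is correct. It follows the same broad strategy as the paper's---a level-set count over the lattice $\Z^3$ followed by summation of the decreasing rearrangement using $q>3$---but the pivotal object differs. The paper works with the local quasinorms $\be_k=\|u_0\|_{L^{3,\I}(B_R(Rk))}^3$, asserts $(\be_k^{1/3})_k\in l^{3,\I}(\Z^3)$ with norm $\lec\|u_0\|_{L^{3,\I}}$, and only afterwards passes to the local energies via $\int_{B_R(Rk)}|u_0|^2\le CR\,\be_k^{2/3}$. You bound the local energies $\al_k$ directly, obtaining $(\al_k)_k\in l^{3/2,\I}$ from the integral inequality $\int_E|u_0|^2\le C\|u_0\|_{L^{3,\I}}^2|E|^{1/3}$. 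Your route buys genuine robustness: that integral inequality is subadditive over the boundedly overlapping balls because it is an honest integral, whereas the paper's displayed chain controls $\sum_k\sup_{\tau}\tau^3|\{x\in B_R(Rk):|u_0|>\tau\}|$ by $\|u_0\|_{L^{3,\I}}^3$, and the weak-$L^3$ quasinorm is not superadditive over disjoint sets (take $u_0=\sum_{k=1}^n2^k\chi_{A_k}$ with disjoint $A_k\subset B_R(Rke_1)$ and $|A_k|=2^{-3k}$: every local quasinorm cubed equals $1$ while $\|u_0\|_{L^{3,\I}}^3\le 8/7$, so that sum grows like $n$). Your argument sidesteps this delicate point entirely while reaching the same conclusion. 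Two minor remarks: the division by $N(\si)^{1/3}$ tacitly assumes $N(\si)<\I$, which you can justify by running the same chain over an arbitrary finite subset of $\{k:\al_k>\si\}$ to bound its cardinality uniformly; and your closing observation that the constant blows up as $q\downarrow 3$ correctly matches the sharpness example $|x|^{-1}$ of Remark \ref{remark.e2qdata}.
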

 \begin{proof}

For $k\in \Z^3$, let $\be_k = \|u_0\|_{L^{3,\I} (B_{R}(kR))}^3$.  Then,
 \begin{align*}
\si^3 \#\{k\in \Z^3 : \be_k >\si^3 \}&\leq  \sum_{k:\be_k>\si}\be_k 
\\&\leq \sum_{k:\be_k>\si}\sup_{\tau>0}\tau^3 |\{ x\in B_{R}(Rk) :  |u_0(x)|>\tau  \}| 
\\&\lec \|u_0\|_{L^{3,\I}}^3.
 \end{align*}
So,
\[
\si \#\{k\in \Z^3: \be_k^{1/3}>\si  \}^{1/3}\lec \|u_0\|_{L^{3,\I}},
\]
i.e., $
\|\be_k^{1/3} \|_{l^{3,\I}}\lec \|u_0\|_{L^{3,\I}}.
$
It follows that  $\|\be_k^{1/3} \|_{l^{q}} \lec  \|u_0\|_{L^{3,\I}}$ for all $q>3$. 
Also note that
\[
\int_{B_R(Rk)}|u_0|^2\,dx\leq CR   \|u_0 \|_{L^{3,\I} (B_R(Rk)) }^2=CR\be_k^{2/3}.
\]
Raising both sides to the $q/2$ power and summing over $k\in \Z^3$ proves that  $A_{0,q}(R)\leq CR \|u_0\|_{L^{3,\I}}^2$ for all $R>0$.  
 \end{proof}

\section*{Acknowledgments}
The research of Tsai was partially supported by the NSERC grant RGPIN-2018-04137. Z.~Bradshaw was supported in part by the Simons Foundation (635438).
 
\addcontentsline{toc}{section}{\protect\numberline{}{References}}

 Zachary Bradshaw, Department of Mathematics, University of Arkansas, Fayetteville, AR 72701, USA;
 e-mail: zb002@uark.edu
 \medskip
 
 Tai-Peng Tsai, Department of Mathematics, University of British
 Columbia, Vancouver, BC V6T 1Z2, Canada;
 e-mail: ttsai@math.ubc.ca

\end{document}